\let\ORIlabel\label
\let\ORIrefstepcounter\refstepcounter
   \let\label\ORIlabel 
   \let\refstepcounter\ORIrefstepcounter}
 \newcommand{\cmark}{\ding{51}}%
\newcommand{\xmark}{\ding{55}}%
\newcommand{\bv}{\mathbf{v}}
\newcommand{\bu}{\mathbf{u}}
\newcommand{\ee}{\boldsymbol e}
\newcommand{\bz}{\boldsymbol z}
\newcommand{\R}{\mathbb R}
\newcommand{\N}{\mathbb N}
\newcommand{\Xscr}{\mathcal{X}}
\newcommand{\bxi}{\boldsymbol{\xi}}
\newcommand{\argmin}{\mathop{\mathrm{argmin}}}
\def \R {\mathbb{R}}
\def \N {\mathbb{N}}
\crefname{hypothesis}{Hypothesis}{Hypotheses}
\title{On the Sampling-based Computation of Nash Equilibria under Uncertainty via the  Nikaido-Isoda Function}
\author{L. Marrinan\thanks{L. Marrinan is in the Department of Industrial \& Manufacturing Engineering, Pennsylvania State University (\email{lwm5431@psu.edu})} \and  U. V. Shanbhag\thanks{U. V. Shanbhag is in the Department of Industrial \& Operations Engineering, University of Michigan at Ann Arbor (\email{udaybag@umich.edu})} \and F. Yousefian\thanks{F. Yousefian is with the Department of Industrial \& Systems Engineering, Rutgers University (\email{farzad.yousefian@rutgers.edu})} 
\funding{This work was funded in part by in part by the ONR under grants N$00014$-$22$-$1$-$2589$ and N$00014$-$22$-$1$-$2757$, AFOSR
Grant FA9550-24-1-0259, and in part by the DOE under grant DE-SC$0023303$.}}
\newcommand*{\addFileDependency}[1]{
  \typeout{(#1)}
  \@addtofilelist{#1}
  \IfFileExists{#1}{}{\typeout{No file #1.}}
}
\declaretheoremstyle[
  headfont=\normalfont\bfseries,
  bodyfont=\normalfont,
numberwithin=section,
spaceabove=8pt,
mdframed={
 linecolor=gray!20,
  skipabove=18pt,
  innerbottommargin=10pt, 
  backgroundcolor=gray!20,
  innerleftmargin=3pt,
  innerrightmargin=8pt}
]{mytheorem}
\def\bko{{\rm 1\kern-.17em l}}
\def\Dscr{{\mathcal D}}
\def\Kscr{{\mathcal K}}
\newcommand{\uu}{\mathbf{u}}
\def\be{\begin{enumerate}}
\def\ee{\end{enumerate}}
\def\argmin{\mathop{\rm argmin}}
 \newcommand{\remove}[1]{}
\newcommand{\pmat}[1]{\begin{pmatrix} #1 \end{pmatrix}}
\newcommand{\x}{{\mathbf{x}}}
\newcommand{\y}{{\mathbf{y}}}
\newcommand{\vv}{{\mathbf{v}}}
\def\Real{\mathbb{R}}
\def\argmin{\mathop{\rm argmin}}
\definecolor{britishracinggreen}{rgb}{0.0, 0.36, 0.15}
\newcommand{\uvs}[1]{\begin{color}{black}#1\end{color}}
\newcommand{\lm}[1]{\begin{color}{blue}#1\end{color}}
\newcommand{\fyy}[1]{\begin{color}{black}#1\end{color}}
\begin{document}

\maketitle
\thispagestyle{empty}
\begin{center} {\em 
The authors would like to dedicate this article to Prof. Tamas Terlaky for both his deep and enduring contributions to the field of optimization as well as his leadership and mentorship.}
\end{center}

\begin{abstract}  
    {We consider the computation of an equilibrium of a stochastic Nash
equilibrium problem, where the player objectives are assumed to be $L_0$-Lipschitz
continuous and convex given rival decisions with convex and closed
player-specific feasibility sets. To address this problem, we consider
minimizing a suitably defined value function associated with the Nikaido-Isoda
function. Such an avenue does not necessitate either monotonicity properties of the concatenated gradient map or potentiality requirements on the game but does require a suitable regularity requirement under which a  stationary point is a Nash equilibrium.  We design and analyze a sampling-enabled projected gradient
descent-type method, reliant on inexact resolution of a player-level
best-response subproblem. By deriving suitable Lipschitzian guarantees on the
value function,  we derive both asymptotic guarantees for the sequence of
iterates as well as rate and complexity guarantees for computing a stationary
point by appropriate choices of the sampling rate and inexactness sequence.}
\end{abstract}

\section{Introduction}
Noncooperative game-theory provides a foundation for the
analysis and computation of equilibria and such models have been used to capture conflict between self-interested parties in a range of settings   arising in operations research, engineering, economics, among other disciplines. It has  been
the subject of a collection of influential monographs including~\cite{facc2010,fudenberg98theory}. A particularly important
question considered in such settings lies in the computation of a Nash
equilibrium in $N$-player games~\cite{nash50equilibrium}. We focus on precisely such a question when
player objectives are expectation-valued and satisfy suitable convexity and smoothness
properties. More formally, consider a set of self-interested players (agents), each of whom
minimizes her  objective, given rival decisions.  Let the number of players be
denoted by $N$, indexed by $\nu \in \{1, \dots, N\},$ where for any $\nu \, \in
\, \left\{\, 1, \dots, N\,\right\}$, player $\nu$'s strategy is denoted by
$\x^{\nu} \in \Xscr^{\nu} \subseteq \Real^{n_{\nu}}$ and $\sum_{\nu=1}^N n_{\nu} = n$. 
{Further,  let $\x^{-\nu}$ denote the strategy-tuple of all players, other than
player $\nu$, i.e.{,} $\x^{-\nu} \equiv (\x^j)_{\nu \ne j=1}^N$. Importantly, $\x
= \left(\, \x^{\nu},\x^{-\nu} \, \right)$ is not a re-cycling or rearrangement
of the vector $\x$, but merely emphasizes the coordinates that correspond to
player $\nu$.  In a \textit{Nash Equilibrium Problem} (NEP), for any $\nu \in
\{1, \dots, N\}$, player  $\nu$ solves the following parametrized optimization
problem, given rival decisions $\x^{-\nu}$. 
 \begin{align} \label{eqn:nash_opt}
     \min_{\x^\nu \, \in \, \Xscr^{\nu}} \, \theta_{\nu}\left(\x^{\nu},\x^{-\nu}\right) \, \triangleq \, \mathbb{E}\left[\, \tilde{\theta}_{\nu}(\x^{\nu}, \x^{-\nu},\bxi)\, \right], 
 \end{align} 
 where $\bxi: \Omega \to \Real^d$, $(\Omega, {\cal F}, \mathbb{P})$ denotes a
 probability space, $\Xi \triangleq \left\{ \xi(\omega) \, \mid \, \omega \in
 \Omega\, \right\}$, $\tilde{\theta}_{\nu}: \Real^n \times \Xi \to \Real$, and $\theta_{\nu}: \Real^n \to
 \Real$.  In the traditional model of the Nash equilibrium problem, $\Xscr^{\nu}$
 does not depend on rival decisions. Moreover, the tuple $\x^* \, \equiv \, \left\{\x^{1,*}, \cdots, \x^{N,*}\right\}$ is a Nash equilibrium if for any $\nu \, \in \, \{1, \cdots, N\}$,  
\begin{align*}\label{def:GNEQ}
    \theta_{\nu}\left( \x^{\nu,*}, \x^{-\nu,*}\right) \, \leq \, \theta_{\nu}\left( \x^{\nu}, \x^{-\nu,*}\right), \, \qquad  \forall \, \x^{\nu} \, \in \, \Xscr^{\nu}.  
\end{align*}
If each player-specific objective $\theta^{\nu}(\bullet,\x^{-\nu})$ is convex and
smooth on an open set ${\cal O}^{\nu} \supset \Xscr^{\nu}$, then $\x^*$ is a
Nash equilibrium if and only if $\x^*$ is a solution to a variational
inequality problem VI$(\Xscr, F)$, where}
$$ \Xscr \, \triangleq \, \prod_{\nu=1}^N \Xscr^{\nu} \mbox{ and } F(\x) \, \triangleq \, \pmat{  \mathbb{E}\left[ \nabla_{\x^1} \tilde{\theta}_1(\x^1,\x^{-1}, \bxi) \right] \\
            \vdots \\
         \mathbb{E}\left[ \nabla_{\x^N} \tilde{\theta}_N(\x^N,\x^{-N}, \bxi) \right]}.$$
Given an NEP $(\Theta, {\bf X})$, where $\Theta \, \triangleq \, \left\{ \theta^1, \cdots, \theta^N \, \right\}$ and ${\bf X}   \triangleq   \left(\Xscr^{{\nu}}\right)_{\nu=1}^N$, we define the bivariate function $\Psi$ as
\begin{align}\label{eqn:Psi_def}
    {\Psi} \left(\x,\y \right) \, \triangleq \, \sum_{\nu = 1}^{N} \left[\theta_{\nu}\left(\x^{\nu},\x^{-\nu}\right)-\theta_{\nu}\left(\y^{\nu},\x^{-\nu}\right) \right] \, .
\end{align} 
The function $\Psi$ is often referred to as the Nikaido-Isoda function and {was first introduced} in 1955 by Nikaido and Isoda~\cite{nikaido1955} to facilitate the analysis of NEPs.  One
notices upon inspection that the Nikaido-Isoda function {(hereafter, referred to as the {\bf NI} function)} admits an intuitive and
natural game-theoretic interpretation. That is, one may regard each summand in
the problem, 
$\theta_{\nu}\left(\x^{\nu},\x^{-\nu}\right)-\theta_{\nu}\left(\y^{\nu},\x^{-\nu}\right)$
as the improvement to player $\nu$'s payoff from making decision {$\y^{\nu}$}
\textit{instead of} {$\x^{\nu}$}, holding rival players' decisions constant at
$\x^{-\nu}$. {Consequently,} if for some $\y^{\nu}$, the quantity
$\theta_{\nu}\left(\x^{\nu},\x^{-\nu}\right)-\theta_{\nu}\left(\y^{\nu},\x^{-\nu}\right)$
is positive, it implies that player $\nu$ may improve {(i.e.{,} reduce)} their cost by changing
their strategy from $\x^{\nu}$ to $\y^{\nu}$.
Similarly, if for some $\y^{\nu}$ the quantity
$\theta_{\nu}\left(\x^{\nu},\x^{-\nu}\right)-\theta_{\nu}\left(\y^{\nu},\x^{-\nu}\right)$
is negative, it implies that player $\nu$ would increase their cost by changing
the values of their decision variables from $\x^{\nu}$ to $\y^{\nu}$. This motivates introducing a gap function $V(\bullet)$, defined as
 \begin{align}\label{eqn:V_def}
	V(\x) \triangleq \sup_{ \y \in \Xscr} \, \Psi\left(\x,\y \right), \quad \x \in \Xscr.
\end{align}
In view of our interpretation of each summand, $V(\bullet)$ represents the most that the
players can improve their payoffs in aggregate, while holding rival strategies
fixed. Importantly, $V(\bullet)$ does not represent the quantity by which the
sum of the cost functions improve if each player simultaneously takes a ``best response''
step, given perfect knowledge of the decision variables of their competitors. That is, 
\begin{align*}
V(\x) \neq \sum_{\nu = 1}^{N} \left[\theta_{\nu}\left(\x^{\nu},\x^{-\nu}\right)-\theta_{\nu}\left(\y^{\nu}{(\x)},\y^{-\nu}{(\x)}\right) \right],
\end{align*}
 where $\y^{\nu}{(\x)}$ represents the best-response of player $\nu$. 
We emphasize the game-theoretic interpretation of points $\x^*$ for which $V(\x^*) = 0$. {At such points}, no player can {unilaterally} improve {her} cost by
changing {her} decision, assuming their rivals do not change their respective
decisions. In fact, it has been shown in \cite{kanzow} that $V(\bullet)$ is
nonnegative over the set of feasible strategies and $\x^*$ is a Nash
equilibrium if and only if $\x^*$ is a feasible zero of ${V(\bullet)}$, i.e.{,} $\x^*$ is a
minimizer of the following problem   
\begin{align} 
    \min_{\x \in \Xscr} \, V(\x).
\end{align}
We revisit the function $V(\bullet)$ in the subsequent sections, as it forms the cornerstone of our analysis. 
We now proceed to provide a brief summary of prior efforts in resolving the stochastic Nash equilibrium problem.
\subsection{{Prior research on stochastic Nash equilibrium problems}} In this subsection, we review efforts made towards computing equilibria of {an} SNEP in a few different settings: (i) centralized schemes based on resolving the stochastic VI when the SNEP satisfies certain convexity requirements; (ii) (inexact) best-response schemes; (iii) (stochastic) gradient-response schemes; and {(iv)} finally techniques reliant on minimizing the {\bf NI} function in the setting of deterministic shared-constraint games. 

\medskip

\noindent (i) {\em Centralized schemes.} Recall that under convexity
requirements, the NE of a noncooperative game can be entirely captured by {the solution set of} a
variational inequality problem VI$(\Xscr,F)$. The associated map $F$ may be
monotone or non-monotone and be either single-valued or set-valued. In the
deterministic setting where $F$ is monotone and single-valued or set-valued, projection-based
techniques and proximal-point approaches may be
employed~\cite{facchinei02finite} and the resulting schemes are often equipped
with rate and complexity guarantees.  Sample-average approximation~\cite{shapiro-sa} and stochastic approximation~\cite{xu-SVI} have been employed for resolving such problems {in} both monotone settings~\cite{farzad-SVI} as well as weakenings (such as pseudomonotone)~\cite{kannan-pseudo}. 

\medskip


\noindent (ii) {\em Best-response schemes.} An alternative to a centralized scheme is a partially decentralized framework in which players compute a best-response, given access to the rival strategies. The resulting synchronous scheme in which players simultaneously update their best-response is referred to as a {\em best-response scheme}. Proximal variants of this, first introduced in \cite{facc2010}, require players to compute a proximal best-response, which is unique when player objectives are parametrized convex functions. Convergence of the synchronous scheme relies on the proximal best-response map being contractive~\cite{lei2020asynchronous}. An alternate approach that can be implemented in an asynchronous manner requires that the game is defined by a property of potentiality. More specifically, there exists a potential function ${\bf P}$ such that for any $\nu$, $\x^{\nu}, \y^{\nu} \in \Xscr^{\nu}$  and $\x^{-\nu} \in \Xscr^{-\nu}$, we have that 
\begin{align}
	{\bf P}(\x^{\nu},\x^{-\nu}) - {\bf P}(\y^\nu,\x^{-\nu}) = \theta_{\nu}
	(\x^{\nu},\x^{-\nu}) - \theta_{\nu}(\y^\nu,\x^{-\nu}).
\end{align}
This function allows one to monitor progress and facilitates the development of a range of schemes. Convergence guarantees rely on the potentiality property of the game.
In  \cite{lei_inexact}, an asynchronous, inexact best-response scheme was developed for the computation of Nash equilibria over a possibly time-varying network in a setting where the player-level problems were expectation valued. This result motivates our examination of a similar question, but in the setting of {\bf NI} functions instead of directly on the player problems. 

\medskip

\noindent (iii) {\em Gradient-Response
schemes.} Gradient-response schemes require
that players compute a simultaneous
gradient-projection step to update their
collective strategy tuple. Convergence
guarantees are closely tied with
monotonicity properties of the concatenated
gradient map $F$~\cite{lei_inexact}, allowing for some weakenings {such as through}
pseudomonotonicity~\cite{kannan}. Such
avenues also lend themselves to developing
distributed counterparts, as explored in the
case of deterministic and stochastic
aggregative NEPs in ~\cite{lei-distributed,
parise-distributed}, respectively. 

\medskip


\noindent (iv) {\em Minimization of the {\bf NI} function.} Nash equilibrium problems may be formulated as optimization problems via the {\bf NI} function. This function was examined in \cite{krawczyk} for computing the equilibrium of a noncooperative game {while optimization reformulations of a class of shared-constraint generalized NEPs} were presented and analyzed in \cite{kanzow, kanzownonsmooth}. In~ 
\cite{raghunathan}, {the authors introduce a} gradient-based {\bf NI} function and demonstrate that a gradient descent algorithm applied to this function converges to a stationary point {in unconstrained regimes}. \\

\subsection{Motivation, {contributions,} and outline}
Our focus in this paper lies in in development of avenues where neither
monotonicity of the map $F$ nor potentiality of the
game necessarily hold. We focus on the minimization of the function
$V$ by considering the equivalent problem of minimizing its regularized
counterpart $V_{\alpha}$ but in the stochastic regime. In contrast with almost all known efforts {when considering the {\bf NI} function}, our scheme can contend with stochastic regimes and provides
amongst the first known rate and complexity guarantees for such an avenue.
Given that we employ first-order schemes, it is natural that our scheme can
guarantee convergence to stationary points. However, under a suitable condition
relatively common in this thread of literature, this stationary point
proves to be a Nash equilibrium of the original problem. {Our contributions are formalized next.}
 
\begin{tcolorbox} {\bf Contributions.} 
In this paper we design and analyze an inexact
stochastic approximation scheme for computing stationary points of the
regularized {\bf NI} function associated with a stochastic NEP with player-specific
convex objectives, in which inexact solutions to
best-response subproblems are produced via stochastic approximation. We show
that under suitable conditions, a sequence generated by Algorithm
\ref{algorithm:NI_inexact} converges {almost surely} to a stationary point of $V\left(
\bullet\right)$. Furthermore, under suitable assumptions, we provide the following convergence rate
and complexity guarantees. 
	\begin{itemize}
		\item[(I)] For a suitably chosen fixed stepsize, the number of projection steps required to ensure that the mean of a suitably defined residual map at a randomly chosen iterate is less than $\varepsilon$ is $\mathcal{O}\left({\varepsilon^{-2}} \right)$. The associated sample complexity is $\mathcal{O}\left( {\varepsilon^{-4} }\right)$.
		\item[(II)] For a suitably chosen diminishing stepsize sequence, the analogous number of projection steps is $\mathcal{O}\left(\varepsilon^{-4} \right)$ and the sample complexity is $\mathcal{O}\left( {\varepsilon^{-6}}\right)$.
\end{itemize} 
\end{tcolorbox}

This paper is organized as follows.  In Section~\ref{sec:nep}, {we introduce the stochastic NEP, the associated {\bf NI} function as well as its regularized variant, and the relevant assumptions. Based on this foundation, we then examine the Lipschitzian properties of the function $V_{\alpha}(\bullet)$ as well as its gradient.} 
In Section~\ref{sec:NI_inexact}, we introduce a sampling-enabled projected (inexact) gradient scheme, reliant on inexact subproblem resolutions. Asymptotic convergence and complexity guarantees {are presented under suitable assumptions on the sampling rate and inexactness sequence} in Section~\ref{sec:analysis}.  
We conclude by {recapping our main contributions  and pointing out possible
avenues for further study.}

\medskip 

\noindent {\bf Notation.} We use $\x$, $\x^{\top}$, and $\|\x\|$ to denote a column vector,
its transpose, and its Euclidean norm, respectively. We
define $f^*\triangleq \inf_{\x \in \Xscr} f(\x)$. Given a continuous mapping, i.e., $f \in C^{0}$,
we write $f \in C^{0,0}(\Xscr)$ if $f$ is Lipschitz continuous on the
set $\Xscr$ with parameter $L_0^{f}$. Given a continuously differentiable
function, i.e., $f \in C^{1}$, we write $f \in C^{1,1}(\Xscr)$ if $\nabla f$ is
Lipschitz continuous on $\Xscr$ with parameter $L_1^{f}$. We
write a.s. for ``almost surely” and $\mathbb{E}[Z]$ denotes the
expectation of a random variable $Z$ with respect to probability measure $\mathbb{P},$ unless explicitly stated otherwise. Given a scalar $u$, $[u]_+
\triangleq \max\{0,u\}$. We let $\Pi_{\Xscr}[\x]$ denote the Euclidean projection of $\x$ onto the closed convex set $\Xscr$.



\section{{Background and Preliminaries}}
\label{sec:nep} 
In this section we formally define the equilibrium of a stochastic NEP, briefly provide necessary and sufficient equilibrium conditions and comment on the existence {and uniqueness} of an equilibrium.
\subsection{Equilibrium Conditions}
\begin{definition}[{\bf Nash Equilibrium Problem}]
		\label{def:GNEP}
		\em
		Define the tuple of player-level cost functions and strategy sets as $\Theta$ and {${\bf X}$}, where   
		\begin{align}
            \Theta  \, \triangleq \,  \left( \theta_\nu\right)_{\nu=1}^N \, \mbox{ and }  {\bf X} \, \triangleq \, \left(\Xscr^{\nu}\right)_{\nu=1}^N,
		\end{align}
		respectively.  Then the pair $\left(\Theta, {\bf X} \right)$ {denotes} a Nash equilibrium problem. 
   {For any $\nu \in \{1, \cdots, N\}$, player $\nu$ solves the following optimization problem, parametrized by rival decisions $\x^{-\nu}$.
	$$ \min_{\x^\nu \, \in \, \Xscr^{\nu}} \, \theta_{\nu}\left(\x^{\nu},\x^{-\nu}\right) \, \triangleq \, \mathbb{E}\left[\, \tilde{\theta}_{\nu}(\x^{\nu}, \x^{-\nu},\bxi)\, \right], $$
	where $\bxi: \Omega \to \Real^d$, $(\Omega, {\cal F}, \mathbb{P})$ denotes a
	probability space, $\Xi \triangleq \left\{ \xi(\omega) \, \mid \, \omega \in
	\Omega\, \right\}$, $\tilde{\theta}_{\nu}: \Real^n \times \Xi \to \Real$, and $\theta_{\nu}: \Real^n \to
	\Real$.}
$\hfill \Box$
\end{definition}

There has been an effort to weaken convexity assumptions in player
specific objectives $\theta_{\nu}(\bullet,\x^{-\nu})$ for any $\x^{-\nu} \, \in
\, \Xscr^{-\nu}$ and $\nu \, \in \, \left\{\, 1, \cdots, N\, \right\}$, existence
guarantees for general nonconvex settings remain elusive (absent additional
structure). In this work, we focus on regimes where convexity of player-specific objectives holds, given rival decisions. This is captured in our main assumption.

\begin{assumption}[{\bf Properties {of player problems}}] \label{ass:ass-1} \em {For any $\nu \, \in \, \{1, \cdots, N\}$, the following hold.}

    \noindent {\bf (a)} For any $\x^{-\nu} \, \in \, \Xscr^{-\nu}$, player $\nu$'s cost function $\theta_{\nu}\left(\bullet,\x^{-\nu}\right)$ is convex on $\Xscr^{\nu}$. 

\noindent {\bf (b)} {$\theta_{\nu}$ is $L_0^{\nu}$-Lipschitz and $L_1^{\nu}$-smooth on $\Xscr$.}

\noindent {\bf (c)} The set $\Xscr^{\nu}$ is compact and convex.  
$\hfill \Box$
\end{assumption}

\medskip

Consider the NEP given by $(\Theta,{\bf X})$ under Assumption~\ref{ass:ass-1}. {Consequently, the necessary and sufficient conditions of optimality of player $\nu$'s problem, given rival decisions $\x^{-\nu}$, are compactly captured} by
the variational inequality problem VI$(\Xscr^{\nu},
\nabla_{\x^\nu}\theta_{\nu}(\bullet,\x^{-\nu}))$, i.e.{,} $\x^{*,\nu}$ satisfies 
\begin{align}\label{VI-nu}
	 (\y^{\nu} - {\x^{*,\nu}})^\top \nabla_{\x^{\nu}} \theta_{\nu}(\x^{*,\nu},\x^{*,-\nu}) \, \geq \, 0, \qquad \forall \, \y^{\nu} \, \in \, \Xscr^{\nu}.
  \end{align}
    Consequently, $\x \, \equiv \, (\x^{\nu})_{\nu=1}^N$ is a {Nash Equilibrium of $(\Theta,{\bf X})$} if and only if $\x^{\nu}$ {solves} VI$(\Xscr^{\nu}, \nabla_{\x^\nu}\theta_{\nu}(\bullet,\x^{-\nu}))$ for $\nu \, \in \, \{1, \cdots, N\}$, i.e.{,} 
\begin{align}\label{VI-all}
    \begin{aligned}
        (\y^{1} - \x^{1})^\top \nabla_{\x^{1}} \theta_{1}({\x^{1}},\x^{-1}) \, & \geq \, 0, \qquad \forall \, \y^1 \, \in \, \Xscr^{1} \\
        & \vdots \\
        (\y^{N} - \x^{N})^\top \nabla_{\x^{N}} \theta_{N}({\x^{N}},\x^{-N}) \, & \geq \, 0, \qquad \forall \, \y^{N} \, \in \, \Xscr^{N}. 
    \end{aligned}
\end{align}
By appealing to a result from~\cite[Ch.~1]{facchinei02finite}, it can be seen that the collection of $N$ coupled variational inequality problems is equivalent to a single variational inequality problem VI$(\Xscr, F)$ where 
$$ \Xscr \, \triangleq \, \prod_{\nu=1}^N \Xscr^{\nu} \, \mbox{ and } F(\x) \, \triangleq \pmat{\nabla_{\x^{1}} \theta_{1}({\x^{1}},\x^{-1}) \\
\vdots \\\nabla_{{\x^{N}}} \theta_{N}({\x^{N}},\x^{-N})},$$
respectively. More specifically, $\x^*$ is a Nash equilibrium if and only if 
\begin{align}
    (\y - \x^*)^\top F(\x^*)\, \geq \, 0, \qquad \forall \, \y \, \in \, \Xscr.
\end{align}
Consequently, under convexity requirements, the existence of a Nash Equilibrium
{can be reduced to checking the} solvability of a suitably defined
variational inequality problem.  Next, we present some basic results on the
existence and uniqueness of equilibria for NEPs. Our first result
considers the Nash equilibrium problem under suitable convexity
requirements~\cite{facchinei02finite}.  

\begin{theorem}[{\bf Existence of NE}] \em Consider the NEP$(\Theta,{\bf X})$. Suppose Assumption~\ref{ass:ass-1} holds. Then a Nash equilibrium exists if one of the following hold{s}. 

    \noindent (a) $\Xscr^{\nu}$ is bounded for {every} $\nu \in \{1, \dots, N\}.$
    
    \noindent (b) There exists a vector $\x^{\rm ref} \, \in \, \Xscr$ such that 
    $$ \liminf_{ \x \, \in \, \Xscr, \|\x\| \to \infty} \, F(\x)^\top(\x-\x^{\rm ref}) \, > \, 0.  
    \hspace{2in} \Box $$
\end{theorem}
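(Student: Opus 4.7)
The plan is to reduce the problem to existence of solutions of the associated variational inequality VI$(\Xscr, F)$ and apply classical existence theory, precisely as cited at the end of the excerpt. The equivalence established immediately before the theorem statement already tells us that, under the convexity clause Assumption~\ref{ass:ass-1}(a), $\x^*$ is a Nash equilibrium of $(\Theta,{\bf X})$ if and only if it solves VI$(\Xscr, F)$ with $\Xscr \triangleq \prod_{\nu=1}^N \Xscr^{\nu}$ and $F$ the concatenated player-gradient map. So it suffices to prove solvability of VI$(\Xscr,F)$ in each of the two cases.

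First I would record the regularity that any VI existence theorem will need. By Assumption~\ref{ass:ass-1}(b), each $\theta_{\nu}$ is $L_1^{\nu}$-smooth on $\Xscr$, so each block $\nabla_{\x^{\nu}}\theta_{\nu}$ is continuous in $\x$, and hence $F:\Xscr\to\R^n$ is continuous. By Assumption~\ref{ass:ass-1}(c), $\Xscr$ is a product of closed convex sets and so is itself closed and convex.

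For case (a), with every $\Xscr^{\nu}$ bounded, $\Xscr$ is a nonempty compact convex set and $F$ is continuous on $\Xscr$. The classical Hartman--Stampacchia theorem (see \cite[Cor.~2.2.5]{facchinei02finite}) then directly yields an $\x^* \in \Xscr$ with $(\y-\x^*)^{\top} F(\x^*) \geq 0$ for all $\y \in \Xscr$, which is the desired Nash equilibrium. For case (b), where $\Xscr$ may be unbounded, I would follow the standard truncation argument: for each $R \geq \|\x^{\rm ref}\|$, set $\Xscr_R \triangleq \Xscr \cap \{\x : \|\x\| \leq R\}$ and use part (a) to obtain a solution $\x_R^* \in \Xscr_R$ of VI$(\Xscr_R, F)$. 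Testing the truncated VI with $\y = \x^{\rm ref}$ gives
\begin{align*}
    F(\x_R^*)^{\top}(\x_R^* - \x^{\rm ref}) \, \leq \, 0.
\end{align*}
If $\|\x_R^*\| \to \infty$ along some subsequence, the coercivity hypothesis $\liminf F(\x)^{\top}(\x-\x^{\rm ref}) > 0$ would contradict this inequality. Hence $\{\x_R^*\}$ is bounded; a convergent subsequence together with closedness of $\Xscr$ and continuity of $F$ shows that its limit solves VI$(\Xscr, F)$, and standard arguments ensure the limit lies in the interior of $\Xscr_R$ for large $R$ so that the VI inequality extends to all $\y \in \Xscr$.

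The main obstacle, such as there is one, is case (b): one has to be careful in choosing the test vector $\y = \x^{\rm ref}$ (feasible in $\Xscr_R$ only once $R \geq \|\x^{\rm ref}\|$), to correctly reconcile the signs in the coercivity contradiction, and to argue that the limit of interior truncated solutions is not just a solution of some $\Xscr_R$-restricted VI but of VI$(\Xscr,F)$ itself. Neither step is delicate beyond standard VI existence theory in \cite{facchinei02finite}, but both are easy to mis-sign on a first pass.
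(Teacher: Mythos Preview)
The paper does not actually prove this theorem; it is stated as a known result cited from \cite{facchinei02finite}, with the $\Box$ marking the end of the statement and no proof given. Your argument is the standard one underlying that citation: reduce to solvability of VI$(\Xscr,F)$ via the equivalence recorded just before the theorem, invoke Hartman--Stampacchia in the compact case, and in the unbounded case run the truncation-plus-coercivity argument. One small tightening for case (b): once you know $\{\x_R^*\}$ is bounded by some $R_0$, you do not need to pass to a limit---for any $R>R_0$ the ball constraint is slack at $\x_R^*$, so $\x_R^*$ already solves the full VI$(\Xscr,F)$ (this is the usual way the argument is closed in \cite{facchinei02finite}).
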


Similarly, uniqueness claims on the NEP require assessing the properties of the map $F$ associated with the related VI$(\Xscr,F)$. The following result provides a formal set of conditions for the  uniqueness of a Nash Equilibrium by leveraging a condition that ensures that VI$(\Xscr, F)$ admits a unique solution~\cite{facchinei02finite}. 


\begin{theorem}[{\bf Uniqueness of NE}] \em  
Consider the NEP$(\Theta,{\bf X})$. Suppose
Assumption~\ref{ass:ass-1} holds and the map $F$ is strongly monotone over $\Xscr$.  Then this game admits a unique
Nash equilibrium.  $\hfill \Box$ 
\end{theorem}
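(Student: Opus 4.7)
The plan is to reduce the claim to the corresponding uniqueness statement for the variational inequality VI$(\Xscr, F)$ and then exploit strong monotonicity in the standard way. The equivalence between Nash equilibria of $(\Theta,{\bf X})$ and solutions of VI$(\Xscr,F)$, already established through the reduction from the coupled system \eqref{VI-all} under Assumption~\ref{ass:ass-1}, immediately implies that it suffices to show that VI$(\Xscr,F)$ has at most one solution. Existence of such a solution follows from the preceding Theorem (Existence of NE), since Assumption~\ref{ass:ass-1}(c) guarantees that each $\Xscr^{\nu}$, and hence $\Xscr = \prod_{\nu=1}^N \Xscr^{\nu}$, is compact and convex.

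For uniqueness, I would argue by contradiction. Suppose $\x^*$ and $\hat\x$ are two distinct solutions of VI$(\Xscr,F)$. Plugging $\y = \hat\x$ into the VI at $\x^*$ and $\y = \x^*$ into the VI at $\hat\x$ yields
\begin{align*}
(\hat\x - \x^*)^\top F(\x^*) \, &\geq \, 0, \\
(\x^* - \hat\x)^\top F(\hat\x) \, &\geq \, 0.
\end{align*}
Adding these two inequalities gives $(\x^* - \hat\x)^\top \bigl(F(\x^*) - F(\hat\x)\bigr) \leq 0$. By the hypothesized strong monotonicity of $F$ on $\Xscr$, with modulus $\mu > 0$, one has $(\x^* - \hat\x)^\top \bigl(F(\x^*) - F(\hat\x)\bigr) \geq \mu \|\x^* - \hat\x\|^2$. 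Combining these yields $\mu \|\x^* - \hat\x\|^2 \leq 0$, which forces $\x^* = \hat\x$, contradicting our assumption.

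There is no real obstacle here; the argument is entirely classical and follows the template in \cite{facchinei02finite}. The only delicate point is ensuring that the equivalence between the NEP and VI$(\Xscr,F)$ is correctly invoked, which is valid precisely because Assumption~\ref{ass:ass-1}(a)--(b) supplies the convexity and smoothness of $\theta_{\nu}(\bullet, \x^{-\nu})$ needed to characterize each player's optimum by the variational inequality \eqref{VI-nu}. The compactness in Assumption~\ref{ass:ass-1}(c) simultaneously secures existence (via part~(a) of the existence theorem) and allows the strong monotonicity hypothesis to be localized to $\Xscr$ rather than all of $\Real^n$.
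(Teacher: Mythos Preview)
Your proposal is correct and matches the paper's approach: the paper states this theorem without proof, simply invoking the standard uniqueness result for VI$(\Xscr,F)$ under strong monotonicity from \cite{facchinei02finite} together with the NE--VI equivalence established under Assumption~\ref{ass:ass-1}. Your write-up supplies precisely that classical argument.
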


\subsection{The Regularized {\bf NI} Function}
We now introduce  the regularized counterparts of $\Psi(\bullet,\bullet)$ and $V(\bullet)$ introduced earlier {via \eqref{eqn:Psi_def} and \eqref{eqn:V_def}, respectively}. Such a regularization has been considered and analyzed in this setting~\cite{kanzow} {as well as in related contexts}~\cite{fukushima1992,gurkan,Mastroeni2003}. As shown in ~\cite{kanzow}, regularizing the {\bf NI} function yields a variant of the value function that is continuously differentiable. The benefit of doing so will be made readily apparent when we examine the maximizer of {$\Psi$}. Given a scalar $\alpha>0$, define first
\begin{align}\label{eqn:Psi_alpha}
	\Psi_{\alpha} \left(\x,\y \right) \, \triangleq \, \sum_{\nu = 1}^{N} \left[\theta_{\nu}\left(\x^{\nu},\x^{-\nu}\right)-\theta_{\nu}\left(\y^{\nu},\x^{-\nu}\right)- \frac{\alpha}{2} \|\x^{\nu} - \y^{\nu} \|^2 \right] \, .
\end{align}
Note that {$\Psi_{\alpha} \left(\x,\y \right) = \Psi(\x,\y) - \frac{\alpha}{2}\|\x - \y\|^2$}. Similarly, for any $\x \, \in \, \Xscr$, 
\begin{align}\label{def:valpha}
	V_{\alpha}(\x) & \triangleq \sup_{\y \in \Xscr} \, \Psi_{\alpha}\left(\x,\y \right) \, = \, \sum_{\nu = 1}^{N} \left[ \theta_{\nu}\left(\x^{\nu},\x^{-\nu}\right)- \min_{\y^\nu \in {\Xscr}^{\nu}}\left[ \theta_{\nu}\left(\y^{\nu},\x^{-\nu}\right)+ \frac{\alpha}{2} \|\x^{\nu} - \y^{\nu} \|^2 \right] \right], 
\end{align}
where the replacement of the supremum by the minimum is justified by the continuity of $V_{\alpha}(\bullet),$ and the {compactness} of $\Xscr$. 
We state the following result from ~\cite{kanzow} without proof. 
\begin{proposition}[{\bf Properties of the regularized {\bf NI} function $V_{\alpha}{(\bullet)}$}]\label{thm:v_alpha_properties} \em Let {Assumption}~\ref{ass:ass-1} hold. Suppose $V_{\alpha}(\bullet)$ is defined in \eqref{def:valpha}. Then the following hold.

	\noindent(a) $V_{\alpha} \left( \x\right) \geq 0$ for all $\x \in \Xscr$. 

	\noindent(b) $\x^*$ is a Nash equilibrium if and only if ${\x^*} \in \Xscr$ and $V_{\alpha}(\x^*) = 0 \,$ . 

    \noindent(c) For every $ \x  \in X$, there exists a unique vector $\y_{\alpha}(\x) \equiv \left( \y_{\alpha}^{\nu}(\x)\right)_{\nu=1}^N$ such that for every $\nu = 1, \dots , N$,  
	\begin{align} \label{prob-subprob}
        \y_{\alpha}^{\nu}(\x)  \, \triangleq \, \argmin_{{\y^\nu \in {\Xscr}^{\nu}}} \left[ \theta_{\nu}\left(\y^{\nu},\x^{-\nu}\right)+ \frac{\alpha}{2} \|\x^{\nu} - \y^{\nu} \|^2 \right]. \hspace{1in} \Box
	\end{align}
\end{proposition}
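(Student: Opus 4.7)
The plan is to dispatch the three parts in sequence, leveraging Assumption~\ref{ass:ass-1} and the definitions in \eqref{eqn:Psi_alpha}--\eqref{def:valpha}.

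For part (a), I would simply test the supremum with the diagonal choice $\y = \x$. Each summand in $\Psi_\alpha(\x,\x)$ is identically zero because $\theta_\nu(\x^\nu,\x^{-\nu}) - \theta_\nu(\x^\nu,\x^{-\nu}) - \frac{\alpha}{2}\|\x^\nu - \x^\nu\|^2 = 0$, so $\Psi_\alpha(\x,\x) = 0$ and hence $V_\alpha(\x) = \sup_{\y \in \Xscr} \Psi_\alpha(\x,\y) \geq 0$. This is immediate.

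For part (c), I would observe that the $\nu$-th subproblem has objective $\y^\nu \mapsto \theta_\nu(\y^\nu,\x^{-\nu}) + \frac{\alpha}{2}\|\x^\nu - \y^\nu\|^2$. By Assumption~\ref{ass:ass-1}(a), $\theta_\nu(\bullet,\x^{-\nu})$ is convex on $\Xscr^\nu$, and the quadratic term is $\alpha$-strongly convex. The sum is therefore $\alpha$-strongly convex, and by Assumption~\ref{ass:ass-1}(c) is minimized over the nonempty, closed, convex, compact set $\Xscr^\nu$. Strong convexity plus closedness of the feasible set yields existence and uniqueness of the argmin $\y_\alpha^\nu(\x)$. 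Since this holds for each $\nu$ independently, $\y_\alpha(\x) = (\y_\alpha^\nu(\x))_{\nu=1}^N$ is uniquely defined, and the $\sup$ in \eqref{def:valpha} is attained, justifying the displayed rewriting as a sum of minima.

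For part (b), the forward direction is short: if $\x^*$ is a Nash equilibrium, then for every $\nu$ and every $\y^\nu \in \Xscr^\nu$, $\theta_\nu(\x^{*,\nu},\x^{*,-\nu}) - \theta_\nu(\y^\nu,\x^{*,-\nu}) \leq 0$, and subtracting the nonnegative quantity $\frac{\alpha}{2}\|\x^{*,\nu}-\y^\nu\|^2$ preserves this sign. Summing gives $\Psi_\alpha(\x^*,\y) \leq 0$ for every $\y \in \Xscr$, so $V_\alpha(\x^*) \leq 0$; combined with part (a), $V_\alpha(\x^*) = 0$. The converse is the main obstacle. Suppose $V_\alpha(\x^*) = 0$ but some player $\nu$ has a profitable deviation $\bar\y^\nu \in \Xscr^\nu$, i.e., $\theta_\nu(\bar\y^\nu,\x^{*,-\nu}) < \theta_\nu(\x^{*,\nu},\x^{*,-\nu})$. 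I would construct the test point $\y_t$ with $\y_t^\nu = \x^{*,\nu} + t(\bar\y^\nu - \x^{*,\nu})$ for $t \in (0,1]$ and $\y_t^j = \x^{*,j}$ for $j \neq \nu$; $\y_t \in \Xscr$ by convexity. Only the $\nu$-th summand of $\Psi_\alpha(\x^*,\y_t)$ is nonzero, and by convexity of $\theta_\nu(\bullet,\x^{*,-\nu})$,
\begin{align*}
\Psi_\alpha(\x^*,\y_t) \, \geq \, t\bigl[\theta_\nu(\x^{*,\nu},\x^{*,-\nu}) - \theta_\nu(\bar\y^\nu,\x^{*,-\nu})\bigr] - \tfrac{\alpha t^2}{2}\|\bar\y^\nu - \x^{*,\nu}\|^2.
\end{align*}
The linear term dominates for sufficiently small $t>0$, so $\Psi_\alpha(\x^*,\y_t) > 0$, contradicting $V_\alpha(\x^*) = 0$. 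Hence no such deviation exists and $\x^*$ is a Nash equilibrium. The key difficulty here is the scaling argument that defeats the $O(t^2)$ regularization penalty with an $O(t)$ cost improvement; this is the standard trick for showing that a smooth quadratic regularization does not alter the equilibrium set.
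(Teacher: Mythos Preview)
Your argument is correct in all three parts. Note, however, that the paper does not prove this proposition at all: it is stated explicitly as a result ``from~\cite{kanzow} without proof,'' so there is no in-paper argument to compare against. Your proof is the standard one for regularized gap functions---the $O(t)$-versus-$O(t^2)$ scaling trick in part~(b) is exactly the classical device used in the cited literature to show that the quadratic regularization preserves the equilibrium set.
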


We now consider the question of whether or not $V_{\alpha}(\bullet)$ is smooth, provided
that the player-level cost functions satisfy certain smoothness conditions.
{Continuous differentiability of $V_{\alpha}$} was proven in ~\cite{kanzow}; we repeat the proof here as it
plays an important role in the analysis that follows. {In addition, we derive the Lipschitz continuity and $L$-smoothness of $V_{\alpha}(\bullet)$.} Before proceeding, we
state a well-known result that will help us in establishing the smoothness of
$V_{\alpha}(\bullet)$. 

\begin{lemma}[{\bf Danskin's Theorem}]\label{lemma:danskin-two}
\em
    {Suppose $\mathcal{Y}$ is a compact set and  $\phi(\x,\y) : \Xscr \times \mathcal{Y} \to \mathbb{R}$ is a continuous function such that $\nabla_{\x}\phi(\x,\y)$ exists and is continuous on $\Xscr \times \mathcal{Y}$.}  Then the following hold for the {value} function $U$, defined as 
    $U(\x) \, \triangleq \, {\displaystyle \max_{\y \in \mathcal{Y}}} \, \phi(\x,\y).$ 

    \noindent (i)  Suppose $Y(\x)$ is defined as
    \begin{align}
        Y(\x) \, \triangleq \, \left\{ \, \bar{y} \, \mid \, \phi(\x,\bar{y}) \, = \, \max_{\y \in \mathcal{Y}} \, \phi(\x,\y) \, \right\}. 
    \end{align}
    Then $U$ is directionally differentiable at $\x$, and $U'(\x ; d) = {\displaystyle \max_{\y \, \in \,  Y(\x)}}\phi (\x ,\y;d).$  

	\noindent(ii) If $Y(\x)$ is a singleton, $\nabla_{\x}U(\x) = \nabla_{\x}\phi(\x,{\y}) \Bigr \rvert_{\y = \y(\x)},$ where $\y(\x)$ uniquely maximizes $\phi(\x,\y)$ for {$\y \in \mathcal{Y}$}.  $\hfill \Box$
\end{lemma}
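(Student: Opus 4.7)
The plan is to establish (i) by sandwiching the directional difference quotient $[U(\x+td)-U(\x)]/t$ between a liminf lower bound and a limsup upper bound that both equal $\max_{\y\in Y(\x)} \nabla_{\x}\phi(\x,\y)^{\top}d$. The compactness of $\mathcal{Y}$ together with continuity of $\phi$ is what makes both sides match; this is the classical route for Danskin's theorem and is what I would follow here. Part (ii) then falls out almost immediately from (i).

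For the \emph{lower bound}, I would fix any $\bar{\y}\in Y(\x)$ and use the pointwise inequality $U(\x+td)\geq \phi(\x+td,\bar{\y})$ together with $U(\x)=\phi(\x,\bar{\y})$ to get
\begin{align*}
\frac{U(\x+td)-U(\x)}{t} \;\geq\; \frac{\phi(\x+td,\bar{\y})-\phi(\x,\bar{\y})}{t}.
\end{align*}
Letting $t\downarrow 0$ and using differentiability of $\phi(\bullet,\bar{\y})$ at $\x$ yields $\liminf_{t\downarrow 0}[U(\x+td)-U(\x)]/t \geq \nabla_{\x}\phi(\x,\bar{\y})^{\top}d$, and taking the supremum over $\bar{\y}\in Y(\x)$ (which is nonempty by compactness of $\mathcal{Y}$ and continuity of $\phi$) gives the desired lower bound.

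For the \emph{upper bound}, which is the main obstacle since one must control maximizers that move with $t$, I would pick, for each $t>0$, some $\y_t\in Y(\x+td)$ so that $U(\x+td)=\phi(\x+td,\y_t)$ and $U(\x)\geq \phi(\x,\y_t)$, giving
\begin{align*}
\frac{U(\x+td)-U(\x)}{t} \;\leq\; \frac{\phi(\x+td,\y_t)-\phi(\x,\y_t)}{t} \;=\; \nabla_{\x}\phi(\x+\tau_t d,\y_t)^{\top}d
\end{align*}
for some $\tau_t\in(0,t)$ by the mean value theorem. For any sequence $t_k\downarrow 0$, compactness of $\mathcal{Y}$ yields a subsequence along which $\y_{t_k}\to \bar{\y}$; the standard upper-semicontinuity-of-argmax argument (using continuity of $\phi$ and of $U$, which itself follows from Berge's maximum theorem) shows $\bar{\y}\in Y(\x)$. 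Continuity of $\nabla_{\x}\phi$ on $\Xscr\times\mathcal{Y}$ then gives $\nabla_{\x}\phi(\x+\tau_{t_k}d,\y_{t_k})^{\top}d \to \nabla_{\x}\phi(\x,\bar{\y})^{\top}d \leq \max_{\y\in Y(\x)}\nabla_{\x}\phi(\x,\y)^{\top}d$. Since the subsequence was arbitrary, the limsup is bounded by the same quantity, matching the lower bound and proving (i).

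For (ii), if $Y(\x)=\{\y(\x)\}$, then (i) gives $U'(\x;d)=\nabla_{\x}\phi(\x,\y(\x))^{\top}d$, which is linear in $d$, so $U$ is Gâteaux differentiable at $\x$ with gradient $\nabla_{\x}\phi(\x,\y(\x))$. Continuity of $\y(\bullet)$ at $\x$ (again from upper semicontinuity of the argmax together with uniqueness) combined with continuity of $\nabla_{\x}\phi$ shows that $\x\mapsto \nabla_{\x}\phi(\x,\y(\x))$ is continuous, which upgrades Gâteaux to Fréchet differentiability, completing the proof.
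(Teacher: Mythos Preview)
The paper does not actually prove this lemma; it is introduced with ``we state a well-known result'' and closed with a $\Box$ immediately after the statement, with no proof given. There is therefore no paper proof to compare against.

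Your argument is the classical sandwich proof of Danskin's theorem and is correct. Two minor remarks: (a) the phrase ``since the subsequence was arbitrary'' in the upper-bound step is slightly loose --- the clean formulation is to start with a sequence $t_k\downarrow 0$ along which the difference quotient attains its $\limsup$, then extract a convergent subsequence of $(\y_{t_k})$ via compactness; since the original sequence already converges, the further subsequence pins down the $\limsup$ as $\nabla_{\x}\phi(\x,\bar{\y})^{\top}d$ for some $\bar{\y}\in Y(\x)$; (b) for part~(ii) the statement only asserts the gradient formula, so G\^ateaux differentiability from~(i) already delivers what is claimed; your upgrade to Fr\'echet via continuity of $\y(\bullet)$ is correct but goes beyond what is stated.
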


{We begin by proving that $\y^{\nu}_{\alpha}(\bullet)$ is a Lipschitz continuous map. Our result leverages Lipschitzian claims on solution maps  of strongly monotone variational inequality problems provided by Dafermos~\cite{dafermos88sensitivity}.}
{\begin{lemma}\label{lem:lips_cont_y_alpha} \em 
Suppose Assumption~\ref{ass:ass-1} holds. Let $\nu \, \in \, \left\{\, 1, \cdots, N\, \right\}$ be given. 
Suppose $\alpha$ is an arbitrary positive scalar such that $\alpha >L_G^{\nu}$. Then   
$$\| \y_{\alpha}^{\nu}(\x_1) - \y_{\alpha}^{\nu}(\x_2)\| \leq 
    {L_{0}^{\y_{\alpha}}} \left\| \x_1- \x_2  \right\| {\mbox{ for any $\x_1, \x_2 \, \in \, \Xscr$,}} $$  
    where ${L_{0}^{\y_{\alpha}}} \triangleq \left( \frac{   \alpha+L_G^{\nu} }{  \alpha - L_G^{\nu}} \right) $. 
    
    
\end{lemma}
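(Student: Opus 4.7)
The plan is to exploit the fixed-point characterization of $\y_\alpha^\nu(\x)$ via the Euclidean projection onto $\Xscr^\nu$, in the spirit of Dafermos' sensitivity template: the hypothesis $\alpha > L_G^\nu$ will manifest itself as the requirement that the implicit mapping defining $\y_\alpha^\nu$ be a strict contraction in its $\y$-argument, so that the implicit dependence of $\y_\alpha^\nu$ on $\x$ can be controlled uniformly.

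More concretely, I would first invoke the necessary and sufficient first-order optimality condition for the strongly convex subproblem~\eqref{prob-subprob}. Under Assumption~\ref{ass:ass-1}(a), its objective is $\alpha$-strongly convex in $\y^\nu$, so the unique minimizer $\y_\alpha^\nu(\x)$ is characterized by the VI
\begin{align*}
(\y^\nu - \y_\alpha^\nu(\x))^\top\bigl[\nabla_{\y^\nu}\theta_\nu(\y_\alpha^\nu(\x), \x^{-\nu}) + \alpha(\y_\alpha^\nu(\x) - \x^\nu)\bigr] \geq 0, \quad \forall\, \y^\nu \in \Xscr^\nu,
\end{align*}
which, upon choosing the step $1/\alpha$, rearranges into the projection identity
\begin{align*}
\y_\alpha^\nu(\x) \;=\; \Pi_{\Xscr^\nu}\!\left[\x^\nu - \frac{1}{\alpha}\nabla_{\y^\nu}\theta_\nu(\y_\alpha^\nu(\x), \x^{-\nu})\right].
\end{align*}
For any $\x_1,\x_2 \in \Xscr$ I would then subtract the two such identities, apply the $1$-Lipschitz continuity (nonexpansiveness) of $\Pi_{\Xscr^\nu}$, and use the triangle inequality to split off the $\x^\nu$-difference from the gradient-difference.

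The bound on the gradient-difference follows from the $L_G^\nu$-Lipschitz continuity of $\nabla\theta_\nu$ on $\Xscr$ (Assumption~\ref{ass:ass-1}(b)), dominating it by $L_G^\nu\bigl(\|\y_\alpha^\nu(\x_1) - \y_\alpha^\nu(\x_2)\| + \|\x_1^{-\nu} - \x_2^{-\nu}\|\bigr)$. Moving the resulting $\y_\alpha$-term back to the left produces the coefficient $1 - L_G^\nu/\alpha$, which is strictly positive precisely by the hypothesis $\alpha > L_G^\nu$; dividing through and using $\|\x_1^\nu - \x_2^\nu\| \leq \|\x_1 - \x_2\|$ and $\|\x_1^{-\nu} - \x_2^{-\nu}\| \leq \|\x_1 - \x_2\|$ collapses the estimate to $\frac{\alpha + L_G^\nu}{\alpha - L_G^\nu}\|\x_1 - \x_2\|$, as desired. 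The main obstacle is really a bookkeeping one, namely ensuring that the triangle-inequality split is consistent with the way $L_G^\nu$ is posited to control the dependence of $\nabla_{\y^\nu}\theta_\nu$ jointly on its $\y^\nu$- and $\x^{-\nu}$-arguments; beyond that, the argument is a direct application of the standard contraction/sensitivity template and the condition $\alpha > L_G^\nu$ arises naturally as the requirement that the underlying implicit map be non-degenerate.
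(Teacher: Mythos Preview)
Your approach is correct and follows essentially the same Dafermos-style sensitivity argument as the paper: both exploit the projection fixed-point characterization of $\y_\alpha^\nu(\x)$, apply nonexpansiveness of $\Pi_{\Xscr^\nu}$, invoke the $L_G^\nu$-Lipschitz bound on $\nabla_{\x^\nu}\theta_\nu$, and then isolate and solve for $\|\y_\alpha^\nu(\x_1)-\y_\alpha^\nu(\x_2)\|$. Your specific choice of step $1/\alpha$ (versus the paper's generic $\gamma\in(0,1/\alpha)$) is a mild streamlining---it collapses the fixed-point identity to $\Pi_{\Xscr^\nu}\bigl[\x^\nu-\tfrac{1}{\alpha}\nabla_{\x^\nu}\theta_\nu\bigr]$ and drops one term from the estimate---but the template and the resulting constant are identical.
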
}
\begin{proof}
	In view of \eqref{prob-subprob}, $\y_{\alpha}^{\nu}(\x) =   \Pi_{\Xscr^{\nu}} \left[ \y_{\alpha}^{\nu}(\x)- \gamma (\nabla_{\x^{\nu}} \theta_{\nu}(\y_{\alpha}^{\nu}(\x),\x^{-\nu}){+ \alpha (\y_{\alpha}^{\nu}(\x) - \x^{\nu})})\right]$ for any $\gamma>0$. Using this stationarity condition, invoking the  non-expansivity of the Euclidean projector, and choosing $\gamma$ such that $0 < \gamma \alpha < 1$ (e.g., $\gamma:=\frac{1}{2\alpha}$), we obtain
		\begin{align*}
 \notag	\| \y_{\alpha}^{\nu}(\x_1) - \y_{\alpha}^{\nu}(\x_2)\| & = \left\| \Pi_{\Xscr^{\nu}} \left[ \y_{\alpha}^{\nu}(\x_1) - \gamma \bigl( \nabla_{\x^{\nu}} \theta_{\nu}(\y_{\alpha}^{\nu}(\x_1),\x_1^{-\nu}) + \alpha (\y_{\alpha}^{\nu}(\x_1) - \x_1^{\nu}) \bigr) \right]  \right. \\
	\notag& \left. - \Pi_{\Xscr^{\nu}} \left[ \y_{\alpha}^{\nu}(\x_2) - \gamma \bigl( \nabla_{\x^{\nu}} \theta_{\nu}(\y_{\alpha}^{\nu}(\x_{2}),\x_2^{-\nu}) + \alpha (\y_{\alpha}^{\nu}(\x_2) - \x_2^{\nu}) \bigr) \right] \right\| \\
	& \le \left\|  \left[ \y_{\alpha}^{\nu}(\x_1) - \gamma \bigl( \nabla_{\x^{\nu}} \theta_{\nu}(\y_{\alpha}^{\nu}(\x_1),\x_1^{-\nu}) + \alpha (\y_{\alpha}^{\nu}(\x_1) - \x_1^{\nu}) \bigr) \right] \right. \\ 
	\notag & \left. -  \left[ \y_{\alpha}^{\nu}(\x_2) - \gamma \bigl( \nabla_{\x^{\nu}} \theta_{\nu}(\y_{\alpha}^{\nu}(\x_{2}),\x_2^{-\nu}) + \alpha (\y_{\alpha}^{\nu}(\x_2) - \x_2^{\nu}) \bigr)  \right] \right\| \\
	& \le (1-\gamma \alpha) \|\y_{\alpha}^{\nu}(\x_1) - \y_{\alpha}^{\nu}(\x_2)\| + \gamma   
\left\|  \nabla_{\x^{\nu}} \theta_{\nu}(\y_{\alpha}^{\nu}(\x_1),\x_1^{-\nu}) -  \nabla_{\x^{\nu}} \theta_{\nu}(\y_{\alpha}^{\nu}(\x_{2}),\x_2^{-\nu}) \right\| \\
	& + \gamma \alpha \| \x_1^{\nu} - \x_2^{\nu}\|\\
& \le    (1-\gamma \alpha) \|\y_{\alpha}^{\nu}(\x_1) - \y_{\alpha}^{\nu}(\x_2)\| + \gamma L_G^{\nu}  
\left\| \pmat{ \y_{\alpha}^{\nu}(\x_1) - \y_{\alpha}^{\nu}(\x_2) \\
	\x_1^{-\nu} - \x_2^{-\nu}} \right\| + \gamma \alpha \| \x_1^{\nu} - \x_2^{\nu}\|.
\end{align*}
We may derive a bound on the penultimate term on the RHS as follows.
\begin{align*}
\left\| \pmat{ \y_{\alpha}^{\nu}(\x_1) - \y_{\alpha}^{\nu}(\x_2) \\
	\x_1^{-\nu} - \x_2^{-\nu}} \right\| & \le \sqrt{ \sum_{i=1}^n [\y_{\alpha}^{\nu}(\x_1) - \y_{\alpha}^{\nu}(\x_2)]_i^2 + \sum_{i=1}^n  [\x_1 - \x_2]_i^2 } \\
	 & \le    \sqrt{ \left\|\y_{\alpha}^{\nu}(\x_1) - \y_{\alpha}^{\nu}(\x_2)\right\|^2} + \sqrt{\left\|\x_1 - \x_2 \right\|^2} 
	 = \left\|\y_{\alpha}^{\nu}(\x_1) - \y_{\alpha}^{\nu}(\x_2)\right\| + \left\|\x_1 - \x_2 \right\|, 
\end{align*}
{where $\sqrt{a+b} \le \sqrt{a}+\sqrt{b}$ for $a,b \ge 0$.}
Consequently, we have that 
		\begin{align*}
 \notag	\| \y_{\alpha}^{\nu}(\x_1) - \y_{\alpha}^{\nu}(\x_2)\| &  \le    (1-\gamma \alpha) \|\y_{\alpha}^{\nu}(\x_1) - \y_{\alpha}^{\nu}(\x_2)\| + \gamma  L_G^{\nu} \left(\left\|\y_{\alpha}^{\nu}(\x_1) - \y_{\alpha}^{\nu}(\x_2)\right\| + \left\|\x_1 - \x_2 \right\| \right) \\
&  + \gamma \alpha \| \x_1^{\nu} - \x_2^{\nu}\|.
\end{align*}
This implies that
		\begin{align*}
 \notag	\gamma(\alpha-L_G^{\nu}) \| \y_{\alpha}^{\nu}(\x_1) - \y_{\alpha}^{\nu}(\x_2)\| &  \le   \gamma(\alpha+L_G^{\nu}) \| \x_1^{\nu} - \x_2^{\nu}\|.
\end{align*}
Recall that $\gamma>0$ and $\alpha > L_G^{\nu}$. Then we have that 
\begin{align*}
 \notag	\| \y_{\alpha}^{\nu}(\x_1) - \y_{\alpha}^{\nu}(\x_2)\| &  \le     \left(\frac{\gamma  (\alpha+L_G^{\nu}) }{\gamma (\alpha - L_G^{\nu})} \right) \left\|\x_1 - \x_2 \right\| \, = \, \left(\frac{  (\alpha+L_G^{\nu}) }{  (\alpha - L_G^{\nu})} \right) \left\|\x_1 - \x_2 \right\|.
\end{align*}
\end{proof}

With Danskin's theorem in hand, we are now ready to state a few important propositions characterizing the smoothness of  $V_{\alpha}{(\bullet)}$. {Part (i) of the result below has been proven in ~\cite{kanzow} while parts (ii) and (iii) have not been addressed in prior literature.} 
\begin{proposition}[Smoothness {and Lipschitzian} properties of $V_{\alpha}{(\bullet)}$]\label{prop:V_alpha} \em
    \noindent  Suppose Assumption~\ref{ass:ass-1} holds.  Then the following hold.
    
    \noindent  {\bf(i)} $\nabla_{{\x}} V_{\alpha}(\x)$ is given by 
$$\nabla_{{\x}} V_{\alpha}(\x) = \nabla_{{\x}} {\Psi_{\alpha}}(\x,\y) \big|_{\y = \y_\alpha(\x)},$$ where 
\begin{align*}
    \nabla_{\x} V_{\alpha} \left( \x \right) &= \sum_{\nu =1}^{N} \left[\nabla_{{\x}} \theta_{\nu}\left(\x^{\nu},\x^{-\nu}\right)- \nabla_{{\x}} \theta_{\nu}\left({\y_{\alpha}^{\nu}(\x)},\x^{-\nu}\right)\right] + \begin{bmatrix}
        \nabla_{\x^1}\theta_{1}\left({\y_{\alpha}^{1}(\x)},\x^{-1}\right) \\
		\vdots \\
        \nabla_{\x^N}\theta_{N}\left({\y_{\alpha}^{N}(\x)},\x^{-N}\right)
    \end{bmatrix} - \alpha \left(\x- \y_{\alpha}(\x) \right).
\end{align*}
    \noindent {\bf(ii)} {The function $V_{\alpha}(\bullet)$ is $L_0^{V_{\alpha}}$-Lipschitz on $\Xscr$}, i.e., {for any $\bu , \bv \in \Xscr$, $V_{\alpha}$ satisfies 
\begin{align*}
	\| V_{\alpha}(\bu) - V_{\alpha}(\bv) \| \leq L_{0}^{V_{\alpha}} \|\bu - \bv \|, 
\end{align*}
    where $L_{0}^{V_{\alpha}} \, \triangleq \, {\displaystyle \sum_{\nu=1}^N} \left(L_{0}^{\nu}(1+\sqrt{1+{(L_{0}^{\y_{\alpha}})}^{2}}) + {{4}\alpha C^{\nu}} (1+L_{0}^{\y_{\alpha}})\right),$}
where $C^{\nu}$ is the diameter of the set $\Xscr^{\nu}$. \\
    \noindent {\bf(iii)} {The function $V_{\alpha}(\bullet)$ is $L_1^{V_{\alpha}}$-smooth on $\Xscr$}, i.e., for any $\bu , \bv \in \Xscr$, $V_{\alpha}$ satisfies 
    {\begin{align}\label{eqn:V_smooth}
    \left\| \nabla_{{\x}} V_{\alpha}(\bu) - \nabla_{{\x}} V_{\alpha}(\bv)\right\| & \leq {L_1^{V_{\alpha}}} \left\|\bu - \bv   \right\|, 
\end{align}
where $L_1^{V_{\alpha}} \, \triangleq \, 
{\left({2}\sum_{\nu =1}^{N} {\left( L_{1}^{\nu}  + L_{1}^{\nu}  \sqrt{1+ {(L_{0}^{\y_{\alpha}})^2}}\right) }+ \alpha \left( 1 + L_{0}^{\y_{\alpha}}\right) \right)}
    .$
}
\end{proposition}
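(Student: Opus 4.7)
The plan is to establish each part in turn, leveraging Danskin's theorem (Lemma~\ref{lemma:danskin-two}) together with the Lipschitz continuity of the inner map $\y_{\alpha}(\bullet)$ established in Lemma~\ref{lem:lips_cont_y_alpha}.

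For part (i), I would apply Danskin with $\phi = \Psi_{\alpha}$ and $\mathcal{Y} = \Xscr$. Continuity and $C^{1}$-smoothness of $\Psi_{\alpha}(\bullet,\y)$ follow from Assumption~\ref{ass:ass-1}, while Proposition~\ref{thm:v_alpha_properties}(c) guarantees that the argmax set is the singleton $\{\y_{\alpha}(\x)\}$, since the regularizer makes $\Psi_{\alpha}(\x,\bullet)$ strongly concave. The singleton case of Danskin then yields $\nabla_{\x} V_{\alpha}(\x) = \nabla_{\x}\Psi_{\alpha}(\x,\y)|_{\y = \y_{\alpha}(\x)}$, and direct differentiation of~\eqref{eqn:Psi_alpha} produces the displayed formula. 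The subtle point is that $\nabla_{\x}\theta_{\nu}(\y^{\nu},\x^{-\nu})$ has a zero $\nu$-th block (since $\x^{\nu}$ does not appear in its first argument), so the block-diagonal correction $[\nabla_{\x^{\nu}}\theta_{\nu}(\y_{\alpha}^{\nu}(\x),\x^{-\nu})]_{\nu}$ is precisely the $\nu$-th block that the summation $\sum_{\nu}\nabla_{\x}\theta_{\nu}(\y_{\alpha}^{\nu}(\x),\x^{-\nu})$ fails to supply; the regularizer contributes $-\alpha(\x - \y_{\alpha}(\x))$.

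For part (ii), I would use the envelope-style decomposition
\[ V_{\alpha}(\bu) - V_{\alpha}(\bv) = \bigl[\Psi_{\alpha}(\bu,\y_{\alpha}(\bu)) - \Psi_{\alpha}(\bv,\y_{\alpha}(\bu))\bigr] + \bigl[\Psi_{\alpha}(\bv,\y_{\alpha}(\bu)) - \Psi_{\alpha}(\bv,\y_{\alpha}(\bv))\bigr]. \]
The first bracket is bounded via the $L_{0}^{\nu}$-Lipschitz continuity of $\theta_{\nu}$ in the first argument, together with the fact that $|\|\bu-\y\|^{2} - \|\bv-\y\|^{2}|$ is controlled by the diameter $C^{\nu}$ of $\Xscr^{\nu}$ times $\|\bu-\bv\|$. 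The second bracket is bounded by Lipschitz continuity of $\Psi_{\alpha}(\bv,\bullet)$ in $\y$ (again via $L_{0}^{\nu}$ and $C^{\nu}$) composed with $\|\y_{\alpha}^{\nu}(\bu) - \y_{\alpha}^{\nu}(\bv)\| \leq L_{0}^{\y_{\alpha}}\|\bu-\bv\|$ from Lemma~\ref{lem:lips_cont_y_alpha}. Summing over $\nu$ and reversing the roles of $\bu$ and $\bv$ yields the two-sided bound with constant $L_{0}^{V_{\alpha}}$.

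For part (iii), I would use the explicit gradient formula from part (i) and decompose $\nabla_{\x} V_{\alpha}(\bu) - \nabla_{\x} V_{\alpha}(\bv)$ into four groups: (a) $\sum_{\nu}[\nabla_{\x}\theta_{\nu}(\bu^{\nu},\bu^{-\nu}) - \nabla_{\x}\theta_{\nu}(\bv^{\nu},\bv^{-\nu})]$, bounded by $\sum_{\nu} L_{1}^{\nu}\|\bu-\bv\|$; (b) $-\sum_{\nu}[\nabla_{\x}\theta_{\nu}(\y_{\alpha}^{\nu}(\bu),\bu^{-\nu}) - \nabla_{\x}\theta_{\nu}(\y_{\alpha}^{\nu}(\bv),\bv^{-\nu})]$, bounded per player by $L_{1}^{\nu}\sqrt{\|\y_{\alpha}^{\nu}(\bu)-\y_{\alpha}^{\nu}(\bv)\|^{2} + \|\bu^{-\nu}-\bv^{-\nu}\|^{2}} \leq L_{1}^{\nu}\sqrt{1+(L_{0}^{\y_{\alpha}})^{2}}\|\bu-\bv\|$ by Lemma~\ref{lem:lips_cont_y_alpha}; (c) the block-diagonal correction term, handled identically; and (d) $-\alpha(\x-\y_{\alpha}(\x))$, bounded by $\alpha(1+L_{0}^{\y_{\alpha}})\|\bu-\bv\|$ via the triangle inequality and Lemma~\ref{lem:lips_cont_y_alpha}. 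Summing these contributions produces the constant $L_{1}^{V_{\alpha}}$. The principal obstacle is the careful bookkeeping in part (iii): each piece of $\nabla_{\x} V_{\alpha}(\x)$ depends on $\x$ both directly and through $\y_{\alpha}^{\nu}(\x)$, so one must consistently apply both the $L_{1}^{\nu}$-smoothness of $\theta_{\nu}$ and the bound $L_{0}^{\y_{\alpha}}$. The load-bearing fact is the Lipschitzianity of $\y_{\alpha}^{\nu}(\bullet)$ whose constant blows up as $\alpha \downarrow L_{G}^{\nu}$, so the regularization assumption $\alpha > L_{G}^{\nu}$ is essential; retaining the Euclidean norm $\sqrt{\|\y_{\alpha}^{\nu}(\bu)-\y_{\alpha}^{\nu}(\bv)\|^{2} + \|\bu^{-\nu}-\bv^{-\nu}\|^{2}}$ rather than splitting via the triangle inequality is what produces the factor $\sqrt{1+(L_{0}^{\y_{\alpha}})^{2}}$ in $L_{1}^{V_{\alpha}}$.
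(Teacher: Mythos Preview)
Your plan for parts (i) and (iii) matches the paper's proof essentially line for line: Danskin's theorem with the singleton maximizer from Proposition~\ref{thm:v_alpha_properties}(c), then the same four-group split of $\nabla_{\x}V_{\alpha}(\bu)-\nabla_{\x}V_{\alpha}(\bv)$ bounded via $L_{1}^{\nu}$-smoothness and Lemma~\ref{lem:lips_cont_y_alpha}, with the Euclidean norm $\sqrt{1+(L_{0}^{\y_{\alpha}})^{2}}$ arising exactly as you describe.

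For part (ii) you take a genuinely different route. The paper does \emph{not} insert the intermediate point $\Psi_{\alpha}(\bv,\y_{\alpha}(\bu))$; instead it expands $V_{\alpha}(\bu)-V_{\alpha}(\bv)=\Psi_{\alpha}(\bu,\y_{\alpha}(\bu))-\Psi_{\alpha}(\bv,\y_{\alpha}(\bv))$ directly and groups like terms across $\nu$: the pair $\theta_{\nu}(\bu)-\theta_{\nu}(\bv)$, the pair $\theta_{\nu}(\y_{\alpha}^{\nu}(\bu),\bu^{-\nu})-\theta_{\nu}(\y_{\alpha}^{\nu}(\bv),\bv^{-\nu})$, and the quadratic pair handled via $\|p\|^{2}-\|q\|^{2}=(p-q)^{\top}(p+q)$. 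Because the second group varies $\y_{\alpha}^{\nu}$ and $\x^{-\nu}$ \emph{simultaneously}, the paper obtains the factor $\sqrt{1+(L_{0}^{\y_{\alpha}})^{2}}$ that appears in the stated constant $L_{0}^{V_{\alpha}}$. Your envelope decomposition separates the $\x$-variation from the $\y$-variation, so each bracket contributes its own $L_{0}^{\nu}$ term and the natural constant you would land on is $L_{0}^{\nu}(2+L_{0}^{\y_{\alpha}})$ rather than $L_{0}^{\nu}(1+\sqrt{1+(L_{0}^{\y_{\alpha}})^{2}})$. Your argument is correct and arguably cleaner, but if the goal is to reproduce the exact constant in the statement you should instead group terms the way the paper does.
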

\begin{proof}
    \noindent (i) Note that the mapping $\Psi_{\alpha}\left(\x, \y \right) = \Psi \left(\x, \y \right) - \frac{\alpha}{2} \|\x- \y \|^2 $ is strongly concave in $\y$ for fixed $\x$. Consequently, the set of solutions of ${\displaystyle \sup_{\y \in \Xscr}} \, \Psi_{\alpha}\left(\x,\y \right)$ is a singleton. Applying Danskin's theorem (cf. Lemma~\ref{lemma:danskin-two}), from \eqref{def:valpha}, it follows that $V_{\alpha}$ is differentiable {and its} gradient {is}  given by 
        $$\nabla_\x V_{\alpha}{(\x)} = \nabla_{\x} \Psi_{\alpha} \left(\x ,\y \right) \Bigr \rvert_{\y = \y_{\alpha}(\x)}.$$ 
    Let us consider \eqref{eqn:Psi_alpha}. Calculating $\nabla_{\x} \Psi_{\alpha} \left(\x ,\y\right)$  by proceeding term by term, it is easy to see that the gradient of the first and third terms on the RHS in \eqref{eqn:Psi_alpha} are given by
    \begin{align*}
    	\nabla_{\x} \sum_{\nu =1}^{N} \theta_{\nu}\left(\x^{\nu},\x^{-\nu}\right) = \sum_{\nu =1}^{N} \nabla_{\x} \theta_{\nu}\left(\x^{\nu},\x^{-\nu}\right) \,
	\mbox{ and } \nabla_{\x} \left(- \frac{\alpha}{2} \|\x- \y \|^2\right)  = - \alpha \left(\x- \y \right).
	\end{align*}
	We now examine the term $ -\sum_{\nu =1}^{N} \theta_{\nu}\left({\y}^{\nu},\x^{-\nu}\right).$ Observe that the $\nu$th components of $\nabla \theta_{\nu}\left({\y}^{\nu},\x^{-\nu}\right)$ are zero, {i.e., $\nabla_{\x^{\nu}} \theta_{\nu}\left({\y}^{\nu},\x^{-\nu}\right) = 0$}, where $y^{\nu}$ is a fixed parameter. 
We may then represent the gradient of the term of interest by  
	\begin{align*}
		{-}\nabla_{{\x}} \sum_{\nu =1}^{N} \theta_{\nu}\left(\y^{\nu},\x^{-\nu}\right) = { {-}\sum_{\nu =1}^{N}  \nabla_{{\x}} \theta_{\nu}\left(\y^{\nu},\x^{-\nu}\right)}  + {\underbrace{\begin{bmatrix}
			\nabla_{\x^1}\theta_{1}\left({\y^{1}},\x^{-1}\right) \\
			\vdots \\
			\nabla_{\x^N}\theta_{N}\left({\y^{N}},\x^{-N}\right)
		\end{bmatrix}}_{{\, = \, 0}}},
	\end{align*}
where this new vector with component gradients acts to {``}cancel out{"} the requisite nonzero components of the preceding term. Putting these three terms together,
    \begin{align*}
        \nabla_{\x} \Psi_{\alpha} \left(\x ,{\y} \right) = \sum_{\nu =1}^{N} \left[\nabla_{\x} \theta_{\nu}\left(\x^{\nu},\x^{-\nu}\right)- \nabla_{\x} \theta_{\nu}\left(\y^{\nu},\x^{-\nu}\right)\right] + \begin{bmatrix}
            \nabla_{\x^1}\theta_{1}\left({\y^{1}},\x^{-1}\right) \\
		\vdots \\
            \nabla_{\x^N}\theta_{N}\left({\y^{N}},\x^{-N}\right)
	\end{bmatrix} - \alpha \left(\x- \y \right) \, ,
    \end{align*}
    and  evaluating at ${\y=}\y_{\alpha}(\x)$ yields the desired result. 
    
    \noindent {\bf(ii)} We can calculate the Lipschitz constant of $V_{\alpha}(\bullet)$  by using the definition of $V_{\alpha}(\bullet)$ and the Lipschitz continuity of $\y_{\alpha}(\bullet)$, as shown by the next inequalities for any $\bu, \bv \in \Xscr$.
\begin{align*}
    \| V_{\alpha}(\bu) - V_{\alpha}(\bv) \|&  \leq \left\| \sum_{\nu=1}^N \left[ \theta_{\nu}\left( \bu^{\nu}, \bu^{-\nu} \right) - \theta_{\nu}\left( {\y_{\alpha}^{\nu}}(\bu), \bu^{-\nu} \right) + \frac{\alpha}{2} \|\bu^{\nu} -\y_{\alpha}^{\nu}(\bu) \|^2 \right]  \right. \\
    &  -\left.\sum_{\nu=1}^N \left[ \theta_{\nu}\left( \bv^{\nu}, \bv^{-\nu} \right) - \theta_{\nu}\left( {\y_{\alpha}^{\nu}}(\bv), \bv^{-\nu} \right) + \frac{\alpha}{2} \| \bv^{\nu} -\y_{\alpha}^{\nu}(\bv) \|^2 \right] \right\| \\
    & \leq {\sum_{\nu=1}^N L_{0}^{\nu}\|\bu -\bv\| + \sum_{\nu=1}^N L_{0}^{\nu} \left\| \pmat{ \y_{\alpha}^{\nu}(\bu) - \y_{\alpha}^{\nu}(\bv) \\
    \bu^{-\nu} - \bv^{-\nu}} \right\|} \\
    & + {\frac{\alpha}{2}}{\sum_{\nu=1}^N \|\bu^{\nu} - \bv^{\nu} + \y^{\nu}_{\alpha}(\bv) - \y^{\nu}_{\alpha}(\bu)\|
    \|\bu^{\nu} + \bv^{\nu} {-\y^{\nu}_{\alpha}(\bu)} - \y^{\nu}_{\alpha}(\bv)\|}\\
    & \leq {\sum_{\nu=1}^N L_{0}^{\nu}\|\bu -\bv\| + \sum_{\nu=1}^N L_{0}^{\nu}\sqrt{\left( \| \y_{\alpha}^{\nu}(\bu) - \y_{\alpha}^{\nu}(\bv)\|^2 + \|\bu - \bv\|^2\right)}} \\
    & + {\sum_{\nu=1}^N {2\alpha C^{\nu}} \left(\|\bu^{\nu} - \bv^{\nu}\| + \|\y^{\nu}_{\alpha}(\bv) - \y^{\nu}_{\alpha}(\bu)\|\right)}\\
    & \leq {\sum_{\nu=1}^N L_{0}^{\nu}\|\bu -\bv\| + \sum_{\nu=1}^N L_{0}^{\nu}\sqrt{1+{(L_{0}^{\y_{\alpha}})}^2}\|\bu - \bv\|} 
     + {\sum_{\nu=1}^N {{4}\alpha C^{\nu}} (1+L_{0}^{\y_{\alpha}}) \|\bu^{\nu} - \bv^{\nu}\|}\\
    & \leq {\sum_{\nu=1}^N \left(L_{0}^{\nu}(1+\sqrt{1+{(L_{0}^{\y_{\alpha}})}^2}) + {2\alpha C^{\nu}} (1+L_{0}^{\y_{\alpha}})\right) \left\| \bu - \bv \right\|},	
\end{align*}
where the second inequality relies on Assumption~\ref{ass:ass-1} and $\|{\bf p}\|^2 - \|{\bf q}\|^2 = ({\bf p}-{\bf q})^\top({\bf p}+{\bf q}) \leq \|{\bf p}-{\bf q}\|\|{\bf p}+{\bf q}\|$ for all ${\bf p},{\bf q} \in \mathbb{R}^{n_\nu}$ and  in the third inequality we use $ \|\bu^{\nu} + \bv^{\nu} {-\y^{\nu}_{\alpha}(\bu)} - \y^{\nu}_{\alpha}(\bv)\| \leq 4C^{\nu}$ for some $C^{\nu} > 0$, in view of boundedness of $\Xscr^{\nu}$. {Further, the fourth inequality follows from Lemma~\ref{lem:lips_cont_y_alpha}.}

\noindent {\bf(iii)}  We now show that $\nabla_x V_{\alpha}{(\bullet)}$ is Lipschitz continuous {on $\Xscr$}. From part {\bf(i)},
\begin{align*}
    & \quad \left\| \nabla_{{\x}} V_{\alpha}(\bu) - \nabla_{{\x}} V_{\alpha}(\bv)\right\|\\
    & = \left\|  \sum_{\nu =1}^{N} \left[\nabla_{{\x}} \theta_{\nu}\left(\bu^{\nu},\bu^{-\nu}\right)- \nabla_{{\x}} \theta_{\nu}\left(\y_{\alpha}^{\nu}(\bu),\bu^{-\nu}\right)\right] + \begin{bmatrix}
		\nabla_{\x^1}\theta_{1}\left(\y_{\alpha}^{1}(\bu),\bu^{-1}\right) \\
		\vdots \\
		\nabla_{\x^N}\theta_{N}\left(\y_{\alpha}^{N}(\bu),\bu^{-N}\right)
	\end{bmatrix} - \alpha \left(\bu- \y_{\alpha}(\bu) \right) \right.  \\
&   \left. - \sum_{\nu =1}^{N} \left[\nabla_{{\x}} \theta_{\nu}\left(\bv^{\nu},\bv^{-\nu}\right)- \nabla_{{\x}} \theta_{\nu}\left(\y_{\alpha}^{\nu}(\bv),\bv^{-\nu}\right)\right] - \begin{bmatrix}
	\nabla_{\x^1}\theta_{1}\left(\y_{\alpha}^{1}(\bv),\bv^{-1}\right) \\
	\vdots \\
	\nabla_{\x^N}\theta_{N}\left(\y_{\alpha}^{N}(\bv),\bv^{-N}\right)
\end{bmatrix} + \alpha \left(\bv- \y_{\alpha}(\bv) \right) \right\| \\
& = \left\|  \sum_{\nu =1}^{N} \left( \left[\nabla_{{\x}} \theta_{\nu}\left(\bu^{\nu},\bu^{-\nu}\right)- \nabla_{{\x}} \theta_{\nu}\left(\y_{\alpha}^{\nu}(\bu),\bu^{-\nu}\right)\right] - \left[\nabla_{{\x}} \theta_{\nu}\left(\bv^{\nu},\bv^{-\nu}\right)- \nabla_{{\x}} \theta_{\nu}\left(\y_{\alpha}^{\nu}(\bv),\bv^{-\nu}\right)\right] \right) \right. \\
& + \left. \begin{bmatrix}
	\nabla_{\x^1}\theta_{1}\left(\y_{\alpha}^{1}(\bu),\bu^{-1}\right) -\nabla_{\x^1}\theta_{1}\left(\y_{\alpha}^{1}(\bv),\bv^{-1}\right) \\
	\vdots \\
	\nabla_{\x^N}\theta_{N}\left(\y_{\alpha}^{N}(\bu),\bu^{-N}\right) -\nabla_{\x^N}\theta_{N}\left(\y_{\alpha}^{N}(\bv),\bv^{-N}\right)
\end{bmatrix} - \alpha \left(\bu-\bv \right) + \alpha \left( \y_{\alpha}(\bu) - \y_{\alpha}(\bv)\right)   \right\|  \\ 
& \leq \left\|  \sum_{\nu =1}^{N} \left( \left[\nabla_{{\x}} \theta_{\nu}\left(\bu^{\nu},\bu^{-\nu}\right)- \nabla_{{\x}} \theta_{\nu}\left(\y_{\alpha}^{\nu}(\bu),\bu^{-\nu}\right)\right] - \left[\nabla_{{\x}} \theta_{\nu}\left(\bv^{\nu},\bv^{-\nu}\right)- \nabla_{{\x}} \theta_{\nu}\left(\y_{\alpha}^{\nu}(\bv),\bv^{-\nu}\right)\right] \right) \right\| 
\end{align*}
\begin{align*}
& + \left\| \begin{bmatrix}
	\nabla_{{\x^1}}\theta_{1}\left(\y_{\alpha}^{1}(\bu),\bu^{-1}\right) -\nabla_{{\x^1}}\theta_{1}\left(\y_{\alpha}^{1}(\bv),\bv^{-1}\right) \\
	\vdots \\
	\nabla_{{\x^N}}\theta_{N}\left(\y_{\alpha}^{N}(\bu),\bu^{-N}\right) -\nabla_{{\x^N}}\theta_{N}\left(\y_{\alpha}^{N}(\bv),\bv^{-N}\right)
\end{bmatrix} \right\| + \left\| \alpha \left(\bu-\bv \right)\right\| + \left\|\alpha \left( \y_{\alpha}(\bu) - \y_{\alpha}(\bv)\right)   \right\|.
    \end{align*}
Proceeding term-by-term, first consider 
\begin{align*}
    & \quad \left\|  \sum_{\nu =1}^{N} \left( \left[\nabla_{\x} \theta_{\nu}\left(\bu^{\nu},\bu^{-\nu}\right)- \nabla_{\x} \theta_{\nu}\left(\y_{\alpha}^{\nu}(\bu),\bu^{-\nu}\right)\right] - \left[\nabla_{\x} \theta_{\nu}\left(\bv^{\nu},\bv^{-\nu}\right)- \nabla_{\x} \theta_{\nu}\left(\y_{\alpha}^{\nu}(\bv),\bv^{-\nu}\right)\right] \right) \right\|   \\
    &{\stackrel{\text{triangle ineq.}}{\leq}}   \sum_{\nu =1}^{N} \left\|\left( \left[\nabla_{{\x}} \theta_{\nu}\left(\bu^{\nu},\bu^{-\nu}\right) - \nabla_{{\x}} \theta_{\nu}\left(\bv^{\nu},\bv^{-\nu}\right) \right] - \left[ \nabla_{{\x}} \theta_{\nu}\left(\y_{\alpha}^{\nu}(\bv),\bv^{-\nu}\right) - \nabla_{{\x}} \theta_{\nu}\left(\y_{\alpha}^{\nu}(\bu),\bu^{-\nu}\right)\right]  \right) \right\|   \\
    & \stackrel{\text{{triangle ineq.}}}{\leq}  \sum_{\nu =1}^{N} \left( \left\| \nabla_{{\x}} \theta_{\nu}\left(\bu^{\nu},\bu^{-\nu}\right) - \nabla_{{\x}} \theta_{\nu}\left(\bv^{\nu},\bv^{-\nu}\right)  \right\| + \left\| \nabla_{{\x}} \theta_{\nu}\left(\y_{\alpha}^{\nu}(\bv),\bv^{-\nu}\right) - \nabla_{{\x}} \theta_{\nu}\left(\y_{\alpha}^{\nu}(\bu),\bu^{-\nu}\right) \right\|  \right)    \\
    & \stackrel{\text{$L_{1}^{\nu}$-smoothness of $\theta_{\nu}$}}{\leq}  \sum_{\nu =1}^{N} \left( L_{1}^{\nu} \left\| \bu - \bv  \right\| + \left(L_{1}^{\nu} \sqrt{\|\y_{\alpha}(\bv) - \y_{\alpha}(\bu)\|^2 + \|\bv - \bu\|^2} \right)   \right)  	\\
     & \stackrel{\text{Lips. cont. of   $\y_{\alpha}^{\nu}$}}{\leq}  \sum_{\nu =1}^{N} \left( L_{1}^{\nu} \left\| \bu - \bv  \right\| +  \left(L_{1}^{\nu} \sqrt{1+ {(L_{0}^{\y_{\alpha}})^2}}\right) {\left\| \bu - \bv  \right\|}\right). 
 \end{align*}
Similarly, the second term can be bounded by invoking the Lipschitz continuity of the composition of Lipschitz functions, as shown below for the $\nu$th component.  
\begin{align*}
    \left\|   \nabla_{{\x^{\nu}}}\theta_{{\nu}}\left(\y_{\alpha}^{{\nu}}(\bu),\bu^{-{\nu}}\right) -\fyy{\nabla_{{\x^{\nu}}}}\theta_{{\nu}}\left(\y_{\alpha}^{{\nu}}(\bv),\bv^{-{\nu}}\right)\right\| &\leq L_{1}^{\nu} \sqrt{{(L_{0}^{\y_{\alpha}})}^2\| \bu-\bv\|^2 + \|\bu-\bv\|^2} \\
        & \leq \left(L_{1}^{\nu} \sqrt{1+ {(L_{0}^{\y_{\alpha}})^2}}\right) \|\bu-\bv\|.
\end{align*}
Consequently, {from the preceding inequality we may write} 
\begin{align*}
	\left\| \begin{bmatrix}
        \nabla_{{\x^1}}\theta_{1}\left(\y_{\alpha}^{1}(\bu),\bu^{-1}\right) -\nabla_{{\x^1}}\theta_{1}\left(\y_{\alpha}^{1}(\bv),\bv^{-1}\right) \\
		\vdots \\
        \nabla_{{\x^N}}\theta_{N}\left(\y_{\alpha}^{N}(\bu),\bu^{-N}\right) -\nabla_{{\x^N}}\theta_{N}\left(\y_{\alpha}^{N}(\bv),\bv^{-N}\right)
    \end{bmatrix} \right\| & {\leq \sqrt{\sum_{\nu=1}^N (L_{1}^{\nu})^2 \left(1+ {(L_{0}^{\y_{\alpha}})^{2}}\right) } \|\bu-\bv\|} \\ 
	& \le { \sum_{\nu = 1}^N L_1^{\nu} \sqrt{1 + (L_0^{\y_{\alpha}})^2} \|\bu - \bv \|.} 
\end{align*}
By aggregating the derived bounds, we obtain that for any $\bu, \bv \in \Xscr$, 
    {\begin{align*}
    \left\| \nabla_{{\x}} V_{\alpha}(\bu) - \nabla_{{\x}} V_{\alpha}(\bv)\right\| & \leq 
    \left({2}\sum_{\nu =1}^{N} {\left( L_{1}^{\nu}  + L_{1}^{\nu}  \sqrt{1+ {(L_{0}^{\y_{\alpha}})^2}}\right) }+ \alpha \left( 1 + L_{0}^{\y_{\alpha}}\right) \right)\left\|\bu - \bv   \right\|.
\end{align*}}
\end{proof}
Before proceeding, we document the Lipschitz constants determined thus far in the table below.
\begin{table}[H]
	\centering
	\begin{footnotesize}
	\renewcommand{\arraystretch}{3.0}
	    \rowcolors{2}{gray!25}{white} 
	\begin{tabular}{|c c l c|}
		\hline
		Map & {Lips.} constant & {Reference} & Value \\
		\hline
		$ \theta^{\nu}(\bullet)$ & $L_{0}^{\nu}$  & {Assump.~\ref{ass:ass-1}}& {Given}  \\
		$\nabla \theta^{\nu}(\bullet)$ & $L_{1}^{\nu}$  & {Assump.~\ref{ass:ass-1}}& {Given}  \\
		$\y_{\alpha}(\bullet)$ & $L_{0}^{\y_{\alpha}}$ & {Lemma~\ref{lem:lips_cont_y_alpha}}& $ \left( \frac{   \alpha+L_G^{\nu} }{  \alpha - L_G^{\nu}} \right)$ for any $\alpha>L_G^{\nu}$ \\
		$V_{\alpha}(\bullet)$ & $L_{0}^{V_\alpha}$ & {Prop.~\ref{prop:V_alpha} (ii)}& ${\displaystyle \sum_{\nu=1}^N \left(L_{0}^{\nu}(1+\sqrt{1+{(L_{0}^{\y_{\alpha}})}^2}) + {4\alpha C^{\nu}} (1+L_{0}^{\y_{\alpha}})\right)} $ \\
		$\nabla_{\x}V_{\alpha}(\bullet)$ & $L_1^{V_{\alpha}}$ & {Prop.~\ref{prop:V_alpha} (iii)} &{\scriptsize $ {\left({2}\sum_{\nu =1}^{N} {\left( L_{1}^{\nu}  + L_{1}^{\nu}  \sqrt{1+ {(L_{0}^{\y_{\alpha}})^2}}\right) }+ \alpha \left( 1 + L_{0}^{\y_{\alpha}}\right) \right)}
  $} \\
		\hline
	\end{tabular}
\end{footnotesize}
\caption{Table Of Lipschitz Constants}
\label{tbl:lipschitz}
\end{table}

  It is crucial to note that
$\y_{\alpha}(\x)$ is a vector, where the
$\nu^{\rm th}$ block-coordinate of
$\y_{\alpha}{(\x)}$, denoted by
$\y_{\alpha}^{\nu}{(\x)}$, is an
\textit{exact solution to problem
\eqref{prob-subprob}}. Armed with such a
solution, one has a closed-form expression
for the gradient and may apply a gradient
descent scheme to find the optimal solution.
However, it is not difficult to imagine
settings where exact solutions to the
problem are not available. For example,
absent a closed-form solution for
$\y_{\alpha}(\x)$, any numerical scheme will
result in some imprecision; {specifically,} constraints on
computational resources or time will lead to
inexactness {while} stochastic programs {defined on general probability spaces} may be
impossible to resolve exactly in finite
time.  We therefore consider the setting
where the problem only admits inexact
solutions in finite time and study how the
inexactness in a solution of the subproblem
of computing $\y_{\alpha}(\x)$ affects the
convergence of a minimization scheme applied
to $V_{\alpha}(\bullet).$

\section{An Inexact Algorithm For Computing Equilibria} \label{sec:NI_inexact}
Our focus for the remainder of this paper is on resolving the optimization problem {defined as} 
\begin{align}\label{eqn:min_V_alpha}
	\min_{\x \in \Xscr}\, V_{\alpha}(\x),
\end{align}
where  $\theta_{\nu}\left(\x^{\nu},\x^{-\nu}\right)$ is {expectation-valued and} given by \eqref{eqn:nash_opt} for {every} $\nu \in
\{1, \dots, N\}$. In the prior section, we have derived Lipschitz constants for $V_{\alpha}$ and its gradient. This analysis allows us to develop an inexact gradient framework; note that an exact gradient requires computing an exact solution $\y^{\nu}(\x)$ for any $\nu \in \{1, \cdots, N\}$ and any $\x \in \Xscr$. Furthermore, the convergence guarantees for our scheme ensure that the gradient-based framework generates a sequence that converges to a stationary point.  In \cite{kanzow}, a condition is given that ensures that a {feasible} stationary point of $V_{\alpha}$ is {indeed} a Nash equilibrium. We state {this condition next}. 
\begin{proposition}[{\bf A Sufficient Condition For A Stationary Point to be an NEP}~\cite{kanzow}] 
	\label{ass-optimality}\em
	\noindent Consider problem $V_{\alpha}$ as defined in \eqref{def:valpha}. For a given ${\x} \in \Xscr$, with ${\x} \neq \y_{\alpha}\left( \x \right)$, the inequality
	\begin{align}\label{eqn:kanzow_assumption_one}
		\left(\sum_{\nu =1}^{N}\left[\nabla \theta_{\nu}\left(\x^{\nu},\x^{-\nu}\right)- \nabla \theta_{\nu}\left(\y_{\alpha}^{\nu}{\left( \x \right)},{\x}^{-\nu}\right)\right]\right)^{\top} \left(\x - \y_{\alpha}\right( \x \left) \right) > 0 \, 
	\end{align}
	holds. Suppose  $\x^*$ is a stationary point of \fyy{\eqref{eqn:min_V_alpha}}. Then ${\x^*}$ is a Nash equilibrium if  \eqref{eqn:kanzow_assumption_one} holds at $\x^*.$ $\hfill$ $\Box$  
\end{proposition}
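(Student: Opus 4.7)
The plan is to argue by contradiction: assume that $\x^*$ is stationary for \eqref{eqn:min_V_alpha} but $\x^* \neq \y_{\alpha}(\x^*)$. I will show that under the inequality \eqref{eqn:kanzow_assumption_one} evaluated at $\x^*$, this assumption is incompatible with stationarity. Consequently $\x^* = \y_{\alpha}(\x^*)$ must hold, and Proposition~\ref{thm:v_alpha_properties}(b) together with the fact that $V_{\alpha}(\x^*)=0$ at such a fixed point then identifies $\x^*$ as a Nash equilibrium.

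The first step is to invoke the stationarity characterization. Since $\y_{\alpha}(\x^*) \in \Xscr$, the first-order optimality condition yields
$\nabla_{\x} V_{\alpha}(\x^*)^{\top}(\y_{\alpha}(\x^*) - \x^*) \geq 0$, i.e.\ $\nabla_{\x} V_{\alpha}(\x^*)^{\top}(\x^* - \y_{\alpha}(\x^*)) \leq 0$. I then substitute the explicit gradient formula from Proposition~\ref{prop:V_alpha}(i) to decompose this inner product into three pieces: (A) the sum $\sum_{\nu}[\nabla\theta_\nu(\x^{*,\nu},\x^{*,-\nu})-\nabla\theta_\nu(\y_\alpha^\nu(\x^*),\x^{*,-\nu})]^{\top}(\x^*-\y_\alpha(\x^*))$, which is precisely the left-hand side of \eqref{eqn:kanzow_assumption_one} and is therefore strictly positive by hypothesis; (B) the block-diagonal piece $\sum_{\nu} \nabla_{\x^\nu}\theta_\nu(\y_\alpha^\nu(\x^*),\x^{*,-\nu})^{\top}(\x^{*,\nu}-\y_\alpha^\nu(\x^*))$; and (C) the quadratic piece $-\alpha\|\x^*-\y_\alpha(\x^*)\|^2$.

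The crux is showing that $(\text{B})+(\text{C}) \geq 0$, so that adding the strictly positive (A) produces a contradiction with the non-positive stationarity inequality. For this, I exploit that $\y_\alpha^\nu(\x^*)$ solves the strongly convex subproblem \eqref{prob-subprob}, whose variational inequality characterization reads
\begin{align*}
\bigl[\nabla_{\x^\nu}\theta_\nu(\y_\alpha^\nu(\x^*),\x^{*,-\nu}) + \alpha(\y_\alpha^\nu(\x^*)-\x^{*,\nu})\bigr]^{\top}(\y^\nu - \y_\alpha^\nu(\x^*)) \geq 0, \qquad \forall\, \y^\nu \in \Xscr^\nu.
\end{align*}
Choosing the admissible test vector $\y^\nu = \x^{*,\nu}$ and rearranging yields $\nabla_{\x^\nu}\theta_\nu(\y_\alpha^\nu(\x^*),\x^{*,-\nu})^{\top}(\x^{*,\nu}-\y_\alpha^\nu(\x^*)) \geq \alpha\|\x^{*,\nu}-\y_\alpha^\nu(\x^*)\|^2$. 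Summing over $\nu$ gives (B) $\geq \alpha\|\x^*-\y_\alpha(\x^*)\|^2 = -(\text{C})$, which is exactly what is needed.

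Combining (A), (B), (C) therefore produces $\nabla_{\x} V_{\alpha}(\x^*)^{\top}(\x^*-\y_\alpha(\x^*)) > 0$, contradicting stationarity. Hence $\x^* = \y_\alpha(\x^*)$, and since $\y_\alpha^\nu(\x^*) = \x^{*,\nu}$ attains the minimum in \eqref{def:valpha}, each summand in $V_{\alpha}(\x^*)$ vanishes, so $V_{\alpha}(\x^*)=0$ and Proposition~\ref{thm:v_alpha_properties}(b) delivers the claim. The only place that requires any care is the algebraic bookkeeping in the block-coordinate sum (B), since the gradient formula in Proposition~\ref{prop:V_alpha}(i) mixes full-space gradients $\nabla_{\x}\theta_\nu$ with block-coordinate gradients $\nabla_{\x^\nu}\theta_\nu$; I would be explicit that the inner products with $\x^*-\y_\alpha(\x^*)$ collapse correctly because the non-$\nu$ blocks of $\x^*-\y_\alpha(\x^*)$ match on both sides by construction.
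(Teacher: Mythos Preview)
The paper does not supply its own proof of this proposition; it is quoted from~\cite{kanzow} and closed with a box immediately after the statement. Your argument is correct and is precisely the classical proof from that reference: test stationarity in the feasible direction $\y_{\alpha}(\x^*)-\x^*$, split $\nabla_{\x} V_{\alpha}(\x^*)^{\top}(\x^*-\y_{\alpha}(\x^*))$ into the three pieces (A), (B), (C) using Proposition~\ref{prop:V_alpha}(i), use the subproblem optimality of $\y_{\alpha}^{\nu}(\x^*)$ to show (B)$+$(C)$\geq 0$, and conclude by contradiction via the strict positivity of (A). The only cosmetic remark is that your final sentence about ``non-$\nu$ blocks matching on both sides'' is unnecessary: the block vector in the gradient formula simply has $\nabla_{\x^{\nu}}\theta_{\nu}$ in its $\nu$th slot, so its inner product with $\x^*-\y_{\alpha}(\x^*)$ directly yields $\sum_{\nu}\nabla_{\x^{\nu}}\theta_{\nu}(\y_{\alpha}^{\nu}(\x^*),\x^{*,-\nu})^{\top}(\x^{*,\nu}-\y_{\alpha}^{\nu}(\x^*))$ without any further bookkeeping.
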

 In Section~\ref{sec:inexact}, we discuss the properties of the inexact solution of $\y^{\nu}(\x)$ for any $\nu$. We conclude with Section~\ref{sec:algo}, where we present the overall gradient-based framework as well as the stochastic approximation scheme for computing an inexact solution.
 \subsection{\texorpdfstring{Inexact computation of $\y^{\nu}_\alpha(\x)$}{Inexact computation of yᵛᵅ(x)}} \label{sec:inexact}
 We begin by introducing the notion of an inexact solution to the strongly convex stochastic optimization problem that arises from computing $\y_{\alpha}(\x)$. 
\begin{definition}[{\bf Inexact solution}] \label{def:inexact_solution}
	\em
	Let $\y_{\alpha}(\x) = (\y_{\alpha}^{1}(\x), \dots, \y_{\alpha}^N(\x))$ be an exact solution to problem~\eqref{prob-subprob}. 
	The random  variable  $\y_{\alpha,\epsilon}(\bullet)$ is an $\epsilon$-approximate solution to problem~\eqref{prob-subprob} if for any $\x \lm{\in \Xscr}$, the following holds almost surely. 
		$$\mathbb{E}\left[ \| \y_{\alpha,\epsilon}(\x)- \y_{\alpha}(\x) \|^2 \, \mid \, \x \right] \leq \epsilon. \hspace{1in} \Box $$
\end{definition}

In this framing, we regard $\y_{\alpha,\epsilon}(\x)$ {as} a random variable.  {Based on this inexact solution $\y_{\alpha,\epsilon}(\x)$}, {we employ a sample-average approximation of $\nabla V_{\alpha}(\x)$ with $M$  samples of the gradient estimator for each player in computing an estimator $\nabla V_{\alpha,\epsilon,M}(\x)$, } defined as
\begin{align}\notag
    \nabla V_{\alpha, \epsilon,M}\left( \x\right) &\triangleq \sum_{\nu =1}^{N}\left[\tfrac{\sum_{j=1}^M \left(\nabla \tilde{\theta}_{\nu}\left(\x^{\nu},\x^{-\nu},{\xi_j^{\nu}}\right)- \nabla \tilde{\theta}_{\nu}\left(\y_{{\alpha},\epsilon}^{\nu}{(\x)},\x^{-\nu},{\xi_j^{\nu}}\right)\right)}{M}\right]   \\
\label{def:nablaV}
		& + \pmat{
            \tfrac{\sum_{j=1}^M\nabla_{x^1}\tilde{\theta}_{1}\left(\y_{{\alpha,}\epsilon}^{1}({\x}),\x^{-1},{\xi_j^1}\right)}{M} -\alpha(\x^1- {\y_{\alpha,\epsilon}^1(\x)})\\
				\vdots \\
                \tfrac{\sum_{j=1}^M\nabla_{\x^N}\tilde{\theta}_{N}\left(\y_{{\alpha},\epsilon}^{N}({x}),\x^{-N},{\xi_j^N}\right)}{M}-\alpha(\x^N- {\y_{\alpha,\epsilon}^N(\x)})}, 
\end{align}
may be used as an estimator of $\nabla V_{\alpha}(\x)$ in an optimization scheme. 
\begin{definition} \label{def:stoch_errors2}
 The difference between our inexact mini-batch {gradient estimator} and the true gradient of $V_{\alpha}{(\bullet)}$ at $\x$ {is denoted by ${\bf e}_{\epsilon,M}(\x)$, defined as} 
\begin{align}
	{\bf e}_{\epsilon,M}{(\x)} \, \triangleq \, {\nabla V_{\alpha, \epsilon,M}(\x)} -\nabla V_{\alpha}(\x). 
\end{align}
\end{definition}
Before proceeding, we require the following assumption. 
\begin{assumption}[{\bf Bias and moment assumptions}]\em
    {There exist{s a} positive scalar $\sigma$ such that
    for all $\x \in \Xscr,$ the following hold almost surely}. \\
\noindent {\bf (i)} $
	{ \mathbb{E}\left[\nabla {\theta}_{\nu}\left(\x^{\nu},\x^{-\nu}\right)- \nabla \tilde{\theta}_{\nu}\left(\x^{\nu},\x^{-\nu},\bxi \right)   \mid \x\right]} = 0 $;
	
\noindent {\bf (ii)} ${ \mathbb{E}\left[\left\|\nabla {\theta}_{\nu}\left(\x^{\nu},\x^{-\nu}\right)- \nabla \tilde{\theta}_{\nu}\left(\x^{\nu},\x^{-\nu},\bxi\right)  \right\|^2 \mid \x\right]} \leq {{\sigma}^2}$. $\hfill \Box$  

\end{assumption}
Then we may derive moment properties on ${\bf e}_{\epsilon,M}(\x)$. 
\begin{lemma}[{\bf Moment properties of ${\bf e}_{\epsilon,M}{(\x)}$}]\label{lem:stoch_error_var}\em
    Consider Definition \ref{def:stoch_errors2}. Suppose Assumption~\ref{ass:ass-1} holds. Then {for any $\x$},  
    $\mathbb{E}[\|{\bf e}_{\epsilon,M}(\x) \|^2 \mid \x] \leq  \tfrac{{(3N + 2){N}\sigma}^2}{M}  + \left( \left({2N+2} \right)\sum_{\nu =1}^{N} (L_{1}^{\nu})^2 + 4 \alpha \right) \epsilon$ {holds almost surely}.
\end{lemma}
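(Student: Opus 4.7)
The plan is to decompose ${\bf e}_{\epsilon,M}(\x) = \nabla V_{\alpha,\epsilon,M}(\x) - \nabla V_{\alpha}(\x)$ into a zero-mean Monte-Carlo sampling part and a deterministic-in-$\y_{\alpha,\epsilon}$ inexactness part, and then bound the second moment of each. Specifically, inside each summand of \eqref{def:nablaV} I would add and subtract $\nabla\theta_{\nu}(\y_{\alpha,\epsilon}^{\nu}(\x),\x^{-\nu})$, writing
\[
\tfrac{1}{M}\textstyle\sum_{j}\nabla\tilde{\theta}_{\nu}(\y_{\alpha,\epsilon}^{\nu},\x^{-\nu},\xi_j^{\nu})-\nabla\theta_{\nu}(\y_{\alpha}^{\nu},\x^{-\nu})
=\underbrace{\tfrac{1}{M}\textstyle\sum_{j}\bigl[\nabla\tilde{\theta}_{\nu}(\y_{\alpha,\epsilon}^{\nu},\x^{-\nu},\xi_j^{\nu})-\nabla\theta_{\nu}(\y_{\alpha,\epsilon}^{\nu},\x^{-\nu})\bigr]}_{\text{sampling}}
+\underbrace{\bigl[\nabla\theta_{\nu}(\y_{\alpha,\epsilon}^{\nu},\x^{-\nu})-\nabla\theta_{\nu}(\y_{\alpha}^{\nu},\x^{-\nu})\bigr]}_{\text{inexactness}},
\]
and similarly split the block-vector containing $\nabla_{x^\nu}\tilde{\theta}_{\nu}(\y_{\alpha,\epsilon}^{\nu},\x^{-\nu},\xi_j^{\nu})$. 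The term at $\x$ (the first line of \eqref{def:nablaV}) is purely sampling, and the $\alpha$-term contributes the purely-inexact piece $\alpha(\y_{\alpha}(\x)-\y_{\alpha,\epsilon}(\x))$.

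Next, I would invoke the elementary inequality $\|\sum_{i=1}^K z_i\|^2 \le K\sum_{i=1}^K \|z_i\|^2$ over the resulting collection of summands, separated into sampling and Lipschitz pieces. For each sampling piece, I would condition on both $\x$ and $\y_{\alpha,\epsilon}(\x)$ (since the inexact-oracle randomness is independent of the i.i.d.\ samples $\{\xi_j^{\nu}\}$), then use Assumption~(i)–(ii) together with the independence of $\{\xi_j^{\nu}\}_{j=1}^M$ across $j$ to bound its second moment by $\sigma^2/M$. Summing over $\nu\in\{1,\dots,N\}$ and over the three groups of sampling pieces (evaluations at $\x$, evaluations at $\y_{\alpha,\epsilon}$ in the outer sum, and the block-vector evaluations at $\y_{\alpha,\epsilon}$) and then taking the tower expectation gives the $(3N+2)N\sigma^2/M$ contribution.

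For each inexactness piece, I would apply $L_1^{\nu}$-smoothness of $\theta_{\nu}$ (Assumption~\ref{ass:ass-1}(b)) to get $\|\nabla\theta_{\nu}(\y_{\alpha,\epsilon}^{\nu},\x^{-\nu})-\nabla\theta_{\nu}(\y_{\alpha}^{\nu},\x^{-\nu})\|\le L_1^{\nu}\|\y_{\alpha,\epsilon}^{\nu}-\y_{\alpha}^{\nu}\|$, handle the block-gradient inexactness piece in the same manner, and bound the $\alpha$-piece directly by $\alpha^2\|\y_{\alpha,\epsilon}(\x)-\y_{\alpha}(\x)\|^2$. Squaring, using $\|\y_{\alpha,\epsilon}^{\nu}-\y_{\alpha}^{\nu}\|^2\le\|\y_{\alpha,\epsilon}-\y_{\alpha}\|^2$ to decouple blocks, and then taking conditional expectation and applying Definition~\ref{def:inexact_solution} yields the $\bigl((2N+2)\sum_{\nu}(L_1^{\nu})^2+4\alpha\bigr)\epsilon$ contribution. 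Adding the two bounds completes the proof.

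The main obstacle is bookkeeping: the block-coordinate structure of the gradient of $V_\alpha$ (cf.\ Proposition~\ref{prop:V_alpha}(i)) makes it easy to double-count terms, and one must be careful that (a) the samples $\xi_j^\nu$ are reused across the ``at-$\x$'' and ``at-$\y_{\alpha,\epsilon}$'' evaluations, which forbids claiming cross-term cancellations but does not affect the norm-squared bound, and (b) $\y_{\alpha,\epsilon}(\x)$ is itself random, which is why conditioning on both $\x$ and $\y_{\alpha,\epsilon}(\x)$ before invoking the bias/moment assumption, then de-conditioning via the tower property, is essential.
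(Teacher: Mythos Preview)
Your proposal is correct and follows essentially the same approach as the paper: split ${\bf e}_{\epsilon,M}(\x)$ into a handful of pieces via $\|\sum_i z_i\|^2 \le K\sum_i\|z_i\|^2$, add and subtract $\nabla\theta_{\nu}(\y_{\alpha,\epsilon}^{\nu},\x^{-\nu})$ to separate each ``at-$\y$'' piece into a pure sampling part (bounded by $\sigma^2/M$ via the bias/moment assumption) and a pure inexactness part (bounded by $(L_1^{\nu})^2\|\y_{\alpha,\epsilon}^{\nu}-\y_{\alpha}^{\nu}\|^2$ via smoothness), and then invoke Definition~\ref{def:inexact_solution}. Your explicit conditioning on $\y_{\alpha,\epsilon}(\x)$ before applying the moment bound, followed by the tower property, is in fact a cleaner justification than the paper gives for the same step; note also that your derivation (and the paper's own proof) naturally produces $4\alpha^2\epsilon$ rather than the $4\alpha\,\epsilon$ appearing in the statement, which is evidently a typo.
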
 
\begin{proof}
{For ease of exposition,  let $\y_{\alpha}(\x)$ be denoted by $\y_{\alpha}$ and $\y_{\alpha,\epsilon}(\x)$ be  given by $\y_{\epsilon}$.}
	{To derive a bound on $\mathbb{E}[\|{\bf e}_{\epsilon,M}{(\x)}\|^2\mid \mathcal \x]$, we may express this conditional expectation as follows.} 
	\begin{align*}
		&\quad \mathbb{E}[\|{\bf e}_{\epsilon,M}{(\x)}\|^2 \mid \x] 
		= \mathbb{E}\left[ \left\|\nabla V_{\alpha, \epsilon,M}(\x) -\nabla V_{\alpha}(\x) \right\|^2 {\mid \x}\right]\\
		&=  \mathbb{E}\left[\left\|\sum_{\nu =1}^{N}\left[{\tfrac{\sum_{j=1}^M \nabla \tilde{\theta}_{\nu}\left(\x^{\nu},\x^{-\nu},{{\bxi}^{\nu}_j}\right)}{M}}- \tfrac{\sum_{j=1}^M \nabla \tilde{\theta}_{\nu}\left(\y_{\epsilon}^{\nu},\x^{-\nu},{{\bxi}^{\nu}_j}\right)}{M}\right] + 
		\begin{bmatrix}
				\tfrac{\sum_{j=1}^M\nabla_{\x^1}\tilde{\theta}_{1}\left(\y_{{\epsilon}}^{1},\x^{-1},{\bxi^1_j}\right)}{M} \\
				\vdots \\
				\tfrac{\sum_{j=1}^M \nabla_{\x^N}\tilde{\theta}_{N}\left(\y_{{\epsilon}}^{N},\x^{-N},{\bxi^N_j}\right)}{M}
		\end{bmatrix} - \alpha \left(\x- \y_{{\epsilon}} \right)\right. \right.   \\
		& - \left. \left. \left( {\sum_{j=1}^M \left[\nabla  {\theta}_{\nu}\left(\x^{\nu},\x^{-\nu}\right)- \nabla  {\theta}_{\nu}\left(\y_{\alpha}^{\nu},\x^{-\nu}\right)\right] } +  
		\begin{bmatrix}
			\nabla_{\x^1}\theta_{1}\left(\y_{\alpha}^{1},\x^{-1}\right) \\
			\vdots \\
			\nabla_{x^N}\theta_{N}\left(\y_{\alpha}^{N},\x^{-N}\right)
		\end{bmatrix}
		 - \alpha \left(\x- \y_{\alpha} \right) \right) \right\|^2 \bigg| \x \right]\\
		&=  \mathbb{E}\left[\left\| \sum_{\nu =1}^{N}\left[\nabla {\theta}_{\nu}\left(\x^{\nu},\x^{-\nu}\right)- \tfrac{\sum_{j=1}^M \nabla \tilde{\theta}_{\nu}\left(\x^{\nu},\x^{-\nu},{\bxi^{\nu}_j}\right)}{M}\right] + \sum_{\nu =1}^{N}\left[\nabla {\theta}_{\nu}\left(\y_{\alpha}^{\nu},\x^{-\nu}\right)- \tfrac{\sum_{j=1}^M \nabla \tilde{\theta}_{\nu}\left(\y_{\epsilon}^{\nu},\x^{-\nu},{\bxi^{\nu}_j}\right)}{M}\right]  \right. \right. 
\end{align*}
\begin{align*}
		& +\left. \left.\left( \begin{bmatrix}
			\nabla_{x^1}\theta_{1}\left(\y_{{\alpha}}^{1},\x^{-1}\right) \\
			\vdots \\
			\nabla_{x^N}\theta_{N}\left(\y_{{\alpha}}^{N},\x^{-N}\right)
		\end{bmatrix} - {\begin{bmatrix}
				\tfrac{\sum_{j=1}^M\nabla_{x^1}\tilde{\theta}_{1}\left(\y_{\epsilon}^{1},\x^{-1},{\bxi^1_j}\right)}{M} \\
				\vdots \\
				\tfrac{\sum_{j=1}^M \nabla_{x^N}\tilde{\theta}_{N}\left(\y_{\epsilon}^{N},\x^{-N},{\bxi^N_j}\right)}{M}
		\end{bmatrix}}+ \alpha \left(\y_{\epsilon} - \y_{\alpha} \right) \right) \right\|^2 \, \bigg| \, \x \right].
	\end{align*}
	Applying the triangle inequality, 
	\begin{align*}
		\mathbb{E}[\|{\bf e}_{\epsilon,M}{(\x)}\|^2 \mid \x] & \leq
		4\mathbb{E}\left[\left\|\sum_{\nu =1}^{N}\left[\nabla {\theta}_{\nu}\left(\x^{\nu},\x^{-\nu}\right)- \tfrac{\sum_{j=1}^M \nabla \tilde{\theta}_{\nu}\left(\x^{\nu},\x^{-\nu},{\bxi_j^{\nu}}\right)}{M}\right]  \right\|^2 \mid \x\right]\\
		&+ 4\mathbb{E}\left[ \left\|\sum_{\nu =1}^{N}\left[\nabla {\theta}_{\nu}\left(\y_{\alpha}^{\nu},\x^{-\nu}\right)- \tfrac{\sum_{j=1}^M \nabla \tilde{\theta}_{\nu}\left(\y_{\epsilon}^{\nu},\x^{-\nu},{\bxi_j^{\nu}}\right)}{M}\right] \right\|^2 \mid \x\right]  \\
		& + 4\mathbb{E}\left[\left\|\begin{bmatrix}
			\nabla_{\x^1}\theta_{1}\left(\y_{{\alpha}}^{1},\x^{-1}\right) \\
			\vdots \\
			\nabla_{\x^N}\theta_{N}\left(\y_{{\alpha}}^{N},\x^{-N}\right)
		\end{bmatrix} - {\begin{bmatrix}
				\tfrac{\sum_{j=1}^M\nabla_{\x^1}\tilde{\theta}_{1}\left(\y_{{\epsilon}}^{1},\x^{-1},{\bxi_j^1}\right)}{M} \\
				\vdots \\
				\tfrac{\sum_{j=1}^M \nabla_{\x^N}\tilde{\theta}_{N}\left(\y_{{\epsilon}}^{N},\x^{-N},{\bxi_j^N}\right)}{M}
		\end{bmatrix}}\right\|^2 \bigg| \x \right] \\
		&+ 4\alpha^2 \mathbb{E}\left[\left\|\y_{\epsilon} - \y_{\alpha}  \right\|^2 \bigg| \x \right].
	\end{align*}
	Proceeding term-by-term, we first observe that 	\begin{align*}
        & \quad \mathbb{E}\left[\left\|\sum_{\nu =1}^{N}\left[\nabla {\theta}_{\nu}\left(\x^{\nu},\x^{-\nu}\right)- \tfrac{\sum_{j=1}^M \nabla \tilde{\theta}_{\nu}\left(\x^{\nu},\x^{-\nu},{\bxi_j^{\nu}}\right)}{M}\right]  \right\|^2 \mid \x\right] \\ 
        &\leq {\sum_{\nu =1}^{N} N \mathbb{E}\left[\left\|\left[\nabla {\theta}_{\nu}\left(\x^{\nu},\x^{-\nu}\right)- \tfrac{\sum_{j=1}^M \nabla \tilde{\theta}_{\nu}\left(\x^{\nu},\x^{-\nu},{\bxi_j^{\nu}}\right)}{M}\right]  \right\|^2 \mid \x\right]}
\leq \tfrac{{N^2}{\sigma}^2}{M}, 
	\end{align*} holds almost surely. The second term can be bounded {in a similar fashion, as shown next.} 
	\begin{align*}
		& \quad \mathbb{E}\left[ \left\|\sum_{\nu =1}^{N}\left[\nabla {\theta}_{\nu}\left(\y_{\alpha}^{\nu},\x^{-\nu}\right)- \tfrac{\sum_{j=1}^M \nabla \tilde{\theta}_{\nu}\left(\y_{\epsilon}^{\nu},\x^{-\nu},{\bxi^{\nu}_j}\right)}{M}\right] \right\|^2 \mid \x\right]  \\
		& \leq \sum_{\nu =1}^{N}{N} \mathbb{E}\left[ \left\|\left[\nabla {\theta}_{\nu}\left(\y_{\alpha}^{\nu},\x^{-\nu}\right)- \tfrac{\sum_{j=1}^M \nabla \tilde{\theta}_{\nu}\left(\y_{\epsilon}^{\nu},\x^{-\nu},{\bxi_j^{\nu}}\right)}{M}\right] \right\|^2 \mid \x\right]  \\
		& \leq \sum_{\nu =1}^{N}{N} {\mathbb{E}\left[ 2\left\|\nabla {\theta}_{\nu}\left(\y_{\alpha}^{\nu},\x^{-\nu}\right)-\nabla {\theta}_{\nu}\left(\y_{\epsilon}^{\nu},\x^{-\nu}\right) \right\|^2 +2 \left\|\nabla {\theta}_{\nu}\left(\y_{\epsilon}^{\nu},\x^{-\nu}\right)  -\tfrac{\sum_{j=1}^M \nabla \tilde{\theta}_{\nu}\left(\y_{\epsilon}^{\nu},\x^{-\nu},{\bxi_j^{\nu}}\right)}{M} \right\|^2 \mid \x\right]}  \\
		& \leq {2N} \sum_{\nu =1}^{N} {(L_{1}^{\nu})^2} \mathbb{E}\left[\| \y^{\nu}_{\alpha} - \y^{\nu}_{\epsilon}\|^2 \, \mid \, \x\right] + {2N} \sum_{\nu =1}^{N}\mathbb{E}\left[ \left\| \nabla {\theta}_{\nu}\left(\y_{\epsilon}^{\nu},\x^{-\nu}\right)- \tfrac{\sum_{j=1}^M \nabla \tilde{\theta}_{\nu}\left(\y_{\epsilon}^{\nu},\x^{-\nu},{\bxi_j^{\nu}}\right)}{M} \right\|^2 \mid \x\right]  \\
		& \leq {2N} \sum_{\nu =1}^{N} {(L_{1}^{\nu})^2} \mathbb{E}\left[\| \y^{\nu}_{\alpha} - \y^{\nu}_{\epsilon}\|^2 \, \mid \, \x\right] + \tfrac{{2N^2\sigma^2}}{M}{,}
	\end{align*}
	where the {second} inequality invokes the Lipschitzian bound
	$\| \nabla \theta_{\nu}\left(\y_{\alpha}^{\nu},\x^{-\nu}\right)- \nabla \theta_{\nu}\left(\y_{\epsilon}^{\nu},\x^{-\nu}\right)\|^2 \leq {{\left(L_{1}^{\nu}\right)}^2} \|\y_{\alpha}^{\nu} - \y_{\epsilon}^{\nu} \|^2.$ The third term can be bounded as 
	\begin{align*}
& \quad 		\mathbb{E}\left[\left\|\begin{bmatrix}
			\nabla_{x^1}\theta_{1}\left(\y_{{\alpha}}^{1},\x^{-1}\right) \\
			\vdots \\
			\nabla_{x^N}\theta_{N}\left(\y_{{\alpha}}^{N},\x^{-N}\right)
		\end{bmatrix} -  {\begin{bmatrix}
				\tfrac{\sum_{j=1}^M\nabla_{x^1}\tilde{\theta}_{1}\left(\y_{{\epsilon}}^{1},\x^{-1},{\bxi_j^1}\right)}{M} \\
				\vdots \\
				\tfrac{\sum_{j=1}^M \nabla_{x^N}\tilde{\theta}_{N}\left(\y_{{\epsilon}}^{N},\x^{-N},{\bxi_j^N}\right)}{M}
		\end{bmatrix}}\right\|^2 \bigg| \x \right] \\
& \leq \sum_{\nu=1}^N \mathbb{E}\left[ \left\|\nabla_{x^\nu}\theta_{\lm{\nu}}\left(\y_{{\alpha}}^{\nu},\x^{-\nu}\right) -
				\tfrac{\sum_{j=1}^M\nabla_{x^\nu}\tilde{\theta}_{\nu}\left(\y_{{\epsilon}}^{\nu},\x^{-\nu},{\bxi_j^{\nu}}\right)}{M} \right\|^2 \mid \x \right]\\
	& \leq {\sum_{\nu =1}^{N} 2{(L_{1}^{\nu})^2} \mathbb{E}\left[\| \y^{\nu}_{\alpha} - \y^{\nu}_{\epsilon}\|^2 \, \mid \, \x\right] + \tfrac{2N\sigma^2}{M}.}
	\end{align*} 
	Putting this all together, we have 
	{\begin{align*}
		\mathbb{E}[\|{\bf e}_{\epsilon,M}(\x)\|^2 \mid \x] &  \leq \tfrac{{N}{(3N + 2)\sigma}^2}{M}  + 2 \left({N+1} \right)\sum_{\nu =1}^{N} {(L_{1}^{\nu})^2} \mathbb{E}\left[\| \y^{\nu}_{\alpha} - \y^{\nu}_{\epsilon}\|^2 \, \mid \, \x\right]  +   4 \alpha ^2 \mathbb{E}\left[\left\| \y_{\alpha} - \y_{\epsilon}\right\|^2 \, \mid \, \x \right] \\
		& \leq \tfrac{{(3N + 2){N}\sigma}^2}{M}  + \left( \left({2N+2} \right)\sum_{\nu =1}^{N} {(L_{1}^{\nu})^2} + 4 \alpha \right) \epsilon   \quad \mbox{almost surely.}
	\end{align*}}
\end{proof}
\subsection{Algorithm description} \label{sec:algo}
{In this section, we present} a {Monte-Carlo sampling enabled} inexact gradient scheme, {formally defined as Algorithm}~\ref{algorithm:NI_inexact}. {As the reader may observe, this is a relatively simple projected gradient scheme where an inexact gradient estimator $\nabla V_{\alpha}(\x_k)$ is computed at iteration $k$. After $K$ steps, the algorithm returns a random variable $\x_{R_{\ell,K}}$ for which rate and complexity guarantees may be provided. The scheme takes the steplength sequence $\{\gamma_k\}$, the sample-size sequence $\{M_k\}$, and the inexactness sequence $\{\epsilon_k\}$ as inputs.} 
	\begin{algorithm}[htb]
        \caption{{{\bf NI}-based Sampling-enabled Inexact Gradient Method}}\label{algorithm:NI_inexact}
		{   \begin{algorithmic}[1]
				\STATE\textbf{input:}  Given $\x_0 \in \Xscr$,  prescribed inexactness sequence $\{\epsilon_k\} > 0$, batch-size sequence $\{M_k\}$, and stepsize sequence $\{\gamma_k\}>0$,  such that $\sum_{k=1}^{\infty}\gamma_{k}^2 < \infty$, $K \in \mathbb{N},$
				\FOR {$k=0,1,\ldots,{K}-1$}
                \STATE (i) Generate $\epsilon_k-$approximate solution $\y_{{\epsilon_k}}(\x_{k})$ via {Algorithm} \ref{algorithm:stochastic_approximation}.  
				\STATE (ii) Calculate $\nabla V_{\alpha, {\epsilon_k},M_k}(\x_k)$ {via \eqref{def:nablaV}}.   
				\STATE (iii) {Update $\x_k$ via the update rule:}
				$\x^{k+1}:= \Pi_{\Xscr}\left[ \x_{k}-\gamma_k \nabla V_{\alpha, \epsilon_k,M_k}(\x_k) \right]$
				\ENDFOR
				\STATE Return $\x_{R_{\ell, K}} \, .$ {(cf. Lemma~\ref{lemma:bound_res})}
		\end{algorithmic}}
	\end{algorithm}
        {In step (i) of the aforementioned scheme, $\y_{\epsilon_k}(\x_k)$ is computed. We observe that this is an $\epsilon$-solution to a strongly convex optimization problem. Such a solution is obtained by employing a stochastic approximation scheme as captured in Algorithm~\ref{algorithm:stochastic_approximation}. We provide a precise rate statement that prescribes the minimum number of steps $T_k$ required to obtain an $\epsilon_k$-solution.}   
		\begin{algorithm}[htb]
		\caption{Stochastic Approximation {Method} For Computing Inexact {Solution} $\y_{\alpha,{\epsilon_k}}(\x_{k})$}\label{algorithm:stochastic_approximation}
		   \begin{algorithmic}[1]
            \STATE\textbf{input:}  Given $\bz_0 \in \Xscr$, {simulation length ${T_k^{\nu}}$}, and stepsize sequence $\{\beta_i\}>0$;
				\FOR {$\nu = 1,\ldots, N$}
				\FOR {$i=0,1,\ldots,{{T_k^{\nu}}}-1$} 
				\STATE  {Update ${\bz_i^{\nu}}$ via the update rule:}
                $\bz_{i+1}^{\nu}:= \Pi_{\Xscr^{{\nu}}}\left[ \, \bz_{i}^{\nu}-\beta_i {\big(}{\nabla_{x^{\nu}}\tilde{\theta}_{\nu}(\bz_{i}^{\nu}, \x_k^{-\nu},\xi_{k,i} )} {+ \alpha \left(\bz_{i}^{\nu} - \x_k^{\nu}\right)}{\big)} \, \right]$
				\ENDFOR
				\ENDFOR
				\STATE Return ${\y_{\alpha,\epsilon_k}(\x_k) = \pmat{\bz^{\nu}_{T^{\nu}_k}}_{\nu=1}^N}.$
		\end{algorithmic}
	\end{algorithm}
            Such schemes have been examined in some detail in a series of monographs (cf.~\cite{Kush03,Borkar08,shapiro09lectures}). {Our rate statement relies on the following definition.}
\begin{definition}[\bf Diameter of compact set]\em
	Let $\Xscr$ be a compact subset $\R^n \, .$ Then the diameter of $\Xscr,$ denoted by {${\cal D}_\Xscr$}, is defined as 
	\begin{align}
		\Dscr_{\Xscr} \triangleq \sup_{\uu,\vv \in \Xscr} \|\uu-\vv\|^2 \, .
	\end{align}
\end{definition}
\begin{proposition}[{\cite[Ch.~5, Eq.~296]{shapiro09lectures}}]\em
    Suppose {Assumption} \ref{ass:ass-1}
holds. {For $\nu = 1, \cdots, N$}, let $\{\bz^{{\nu}}_i\}$ be
generated by
Algorithm~\ref{algorithm:stochastic_approximation}
{where} $\bz_0 = \x$, {$$T_k^\nu { = } 
{\Bigl\lceil \epsilon_k^{-1} \Big (
\tfrac{2\left({L_{1}^{V_{\alpha}}}\right)^2}{\alpha^2}
+ 2\Dscr_{\Xscr^{\nu}} \Big) \Bigr\rceil},$$}
{and} $\{\beta_i\}$ chosen so that ${\beta_i} <
\tfrac{1}{2 \alpha i}$. Then {$\y_{\alpha,\epsilon_k}(\x) = \pmat{\bz_{T_k^\nu}^{\nu}}_{\nu=1}^N$}
is an ${\epsilon_k}-${approximation of} $\y_{\alpha}(\x)$, {\fyy{i.e.,} } 
	\begin{align}
        \mathbb{E}\left[ \, \| {\y_{\alpha,\epsilon_k}(\x_k)} - \y_{\alpha}({\x_k}) \|^2 \, \mid \, {\x_k} \right] \,\le \,  {\epsilon_k} \quad \mbox{almost surely}. 
	\end{align}
\end{proposition}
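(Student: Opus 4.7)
The plan is to recognize that Algorithm \ref{algorithm:stochastic_approximation}, for each fixed $\x_k$ and each fixed block $\nu$, is nothing more than projected stochastic gradient descent applied to the strongly convex subproblem \eqref{prob-subprob}, and then invoke the classical $\mathcal{O}(1/T)$ rate for such a scheme. The proof therefore reduces to verifying the hypotheses of that textbook result and tracking the constants so that they match the prescribed $T_k^\nu$.

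First, I would observe that for fixed $\x_k$, the subproblem objective
$$ f_\nu(\bz) \triangleq \theta_\nu(\bz, \x_k^{-\nu}) + \tfrac{\alpha}{2}\|\bz - \x_k^\nu\|^2 $$
is $\alpha$-strongly convex on $\Xscr^\nu$ by Assumption~\ref{ass:ass-1}(a), with unique minimizer $\y_\alpha^\nu(\x_k)$ by Proposition~\ref{thm:v_alpha_properties}(c). The stochastic oracle
$$ G(\bz,\xi) \triangleq \nabla_{\x^\nu}\tilde{\theta}_\nu(\bz, \x_k^{-\nu}, \xi) + \alpha(\bz - \x_k^\nu) $$
is unbiased for $\nabla f_\nu(\bz)$ by the bias assumption, and has second moment uniformly bounded on $\Xscr^\nu$ by combining the variance bound $\sigma^2$ with $\|\nabla_{\x^\nu}\theta_\nu\|$ bounded via $L_0^\nu$ and $\|\bz - \x_k^\nu\| \leq \sqrt{\Dscr_{\Xscr^\nu}}$.

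Next, I would carry out the standard one-step contraction: using the nonexpansivity of $\Pi_{\Xscr^\nu}$ and the first-order optimality of $\y_\alpha^\nu(\x_k)$,
\begin{align*}
\mathbb{E}\!\left[\|\bz_{i+1}^\nu - \y_\alpha^\nu(\x_k)\|^2 \mid \cF_i\right] \leq (1 - 2\alpha \beta_i)\|\bz_i^\nu - \y_\alpha^\nu(\x_k)\|^2 + \beta_i^2 \, \mathbb{E}\|G(\bz_i^\nu,\xi_{k,i})\|^2.
\end{align*}
With $\beta_i$ satisfying the prescribed bound and the telescoping induction carried out precisely as in~\cite[Ch.~5]{shapiro09lectures}, this recursion yields
$$ \mathbb{E}\!\left[\|\bz_i^\nu - \y_\alpha^\nu(\x_k)\|^2 \mid \x_k\right] \leq \frac{1}{i}\left(\tfrac{2 (L_1^{V_\alpha})^2}{\alpha^2} + 2 \Dscr_{\Xscr^\nu}\right), $$
where the first term collects the variance contribution (bounding $\mathbb{E}\|G\|^2$ through $L_1^{V_\alpha}$, which dominates the per-player gradient Lipschitz constant) and the second term absorbs the initialization $\|\bz_0 - \y_\alpha^\nu(\x_k)\|^2 \leq \Dscr_{\Xscr^\nu}$.

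Finally, plugging in the prescribed $T_k^\nu$ cancels the large constant and produces the per-block bound $\epsilon_k$; summing across $\nu = 1, \dots, N$ (after, if needed, absorbing the factor $N$ into $\Dscr$ or the Lipschitz term) gives the claimed almost-sure bound on $\mathbb{E}\|\y_{\alpha,\epsilon_k}(\x_k) - \y_\alpha(\x_k)\|^2$. The main obstacle I anticipate is the careful bookkeeping in step~3: in particular, matching the exact form $2(L_1^{V_\alpha})^2/\alpha^2 + 2\Dscr_{\Xscr^\nu}$ requires one to relate $\mathbb{E}\|G(\bz_i^\nu,\xi_{k,i})\|^2$ to $L_1^{V_\alpha}$, which in turn rests on the fact that $L_1^\nu + \alpha$ is dominated by the aggregated constant $L_1^{V_\alpha}$ derived in Proposition~\ref{prop:V_alpha}(iii). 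Everything else is a direct application of the standard strongly convex SA rate.
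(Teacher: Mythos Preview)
The paper provides no proof of this proposition. It is stated with the citation \cite[Ch.~5, Eq.~296]{shapiro09lectures} in its header and is immediately followed by the opening of Section~\ref{sec:analysis}; the authors are simply invoking the standard $\mathcal{O}(1/T)$ convergence rate for projected stochastic approximation on a strongly convex problem as a black-box result from the Shapiro--Dentcheva--Ruszczy\'nski textbook.

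Your sketch is the correct outline of how that textbook result is obtained, so in that sense you are reproducing rather than diverging from what the paper relies on. Two bookkeeping points are worth flagging. First, the per-block bound you derive is $\epsilon_k$ for each $\nu$, but Definition~\ref{def:inexact_solution} requires the bound on the \emph{full} vector $\|\y_{\alpha,\epsilon_k}(\x_k)-\y_\alpha(\x_k)\|^2 = \sum_{\nu=1}^N \|\bz_{T_k^\nu}^\nu - \y_\alpha^\nu(\x_k)\|^2$; summing gives $N\epsilon_k$, not $\epsilon_k$, and the paper's statement of $T_k^\nu$ does not appear to contain the factor $N$ needed to absorb this. You note this gap yourself. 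Second, the stepsize condition $\beta_i < \tfrac{1}{2\alpha i}$ as written is slightly unusual (the classical choice is $\beta_i = \tfrac{c}{i}$ with $c > \tfrac{1}{2\alpha}$ or similar), and the precise constant $2(L_1^{V_\alpha})^2/\alpha^2$ in the numerator is not something the standard analysis yields directly from bounding $\mathbb{E}\|G\|^2$; your justification via ``$L_1^\nu + \alpha$ dominated by $L_1^{V_\alpha}$'' is plausible but would need to be made explicit. These are quirks of how the proposition is stated in the paper rather than flaws in your reasoning.
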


\section{Convergence and rate analysis}\label{sec:analysis}
	
	
	
	
    We define a common residual used to measure the departure from stationarity
    of an $L$-smooth function {with respect to a closed and
    convex set}~\cite{beck14introduction}. We also recall its inexact
    counterpart and show that the the norm-squared of the error-afflicted
    residual can be bounded in terms of the true residual and the magnitude of
    the error~\cite{CSY2021MPEC}.  
\begin{definition}[{\bf The residual
mapping}]\label{def:res_maps}\em
Given ${\beta,\epsilon, M >0}$, for any $\x \in \mathbb{R}^n$ and $ {\bf e}_{\epsilon, M} \in \mathbb{R}^n$ an arbitrary given vector,
    let the residual mappings $G_{\beta}(\x)$ and $\tilde{G}_{\beta,\epsilon,{M}}(\x)$ be defined as 
    \begin{align}
        G_{\beta}(\x) &\triangleq \beta {\left(\x - \Pi_{\Xscr}\left[\x - \tfrac{1}{\beta} \nabla_{{\x}} {V_{\alpha}(\x)}\right]\right)} \mbox{ and }\\
		\hspace{-0.05in}\tilde{G}_{\beta,\epsilon,M}(\x) &\triangleq \beta\left( \x - \Pi_{\Xscr}\left[\x - \tfrac{1}{\beta} ({\nabla_{{\x}} {V_{\alpha,\epsilon,M}}(\x)}) \right]\right) 
         = \beta\left( \x - \Pi_{\Xscr}\left[\x - \tfrac{1}{\beta} ({\nabla_{{\x}} {V_{\alpha}}(\x)}+{\bf e}_{\epsilon,M}(\x)) \right]\right). 
\end{align} $\hfill$ $\Box$
\end{definition}
 Note that ${\nabla_x V_{\alpha}(\x)}+{\bf e}_{\epsilon,M}(\x)$ in the definition of $\tilde{G}_{\beta,\epsilon,M}(\x)$ is meant to correspond to an error-afflicted or  an inexact estimate of the gradient of $V_{\alpha}(\x).$
    We now {{recall} a result, first shown in ~\cite{CSY2021MPEC}}, that will be useful in proving the convergence of {Algorithm}~\ref{algorithm:NI_inexact} in the case that $V_{\alpha}$ is nonconvex. 
    \begin{lemma}\label{bd_G}
	At any point $\x \in \Xscr$, the following inequality holds. 
    \begin{align*}
       \|G_{\beta}{(\x)}\|^2 \leq {2} \|\tilde{G}_{\beta, \epsilon,M}{(\x)}\|^2 + 2 \|{ \bf e}_{ \epsilon,M} (\x)\|^2.
    \end{align*}
    \begin{proof}
        Invoking the definition of $G_{\beta}(\x)$, observing that $V_{\alpha,\epsilon,M}(\x) = V_{\alpha}(\x) +{\bf e}_{\epsilon,M}  $, and adding and subtracting  $\beta\Pi_{\Xscr}\left[\x - \tfrac{1}{\beta} ({\nabla_{{\x}} V_{\alpha,\epsilon,{M}}(\x)}) \right]$, we have
        \begin{align*}
            G_{\beta}(\x) & = \left\|  \beta \left(\x - \Pi_{\Xscr}\left[\x - \tfrac{1}{\beta} \nabla_{\x} {V_{\alpha}(\x)}\right]\right) \right\|^2 \\
             & = \left\| \beta\left( \x - \Pi_{\Xscr}\left[\x - \tfrac{1}{\beta} ({\nabla_{{\x}} V_{\alpha,\epsilon,{M}}(\x)}) \right]\right) + \beta \Pi_{\Xscr}\left[\x - \tfrac{1}{\beta} \nabla_{{\x}} {V_{\alpha}(\x)}\right] -  \beta\Pi_{\Xscr}\left[\x - \tfrac{1}{\beta} ({\nabla_{{\x}} V_{\alpha,\epsilon,{M}}(\x)}) \right] \right\|^2 \\
             & \leq  {2} \left\| \beta\left( \x - \Pi_{\Xscr}\left[\x - \tfrac{1}{\beta} ({\nabla_{{\x}} V_{\alpha,\epsilon,M}(\x)}) \right]\right)\right\|^2 \\
	& + 2\left\| \beta \Pi_{\Xscr}\left[\x - \tfrac{1}{\beta} \nabla_{{\x}} {V_{\alpha}(\x)}\right] -\beta\Pi_{\Xscr}\left[\x - \tfrac{1}{\beta} (\nabla_{{\x}} V_{\alpha}(\x)+{\bf e}_{\epsilon,M}) \right] \right\|^2 \\
             & \leq {2} \|\tilde{G}_{\beta, \epsilon,M}(\x)\|^2 + 2 \|{\bf e}_{\epsilon,M}\|^2.
        \end{align*}
    \end{proof}
\end{lemma}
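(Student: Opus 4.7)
The plan is to rewrite $G_{\beta}(\x)$ as a sum of two terms: the first being precisely $\tilde{G}_{\beta,\epsilon,M}(\x)$, and the second being the discrepancy between the two projections that arises solely from the gradient error $\mathbf{e}_{\epsilon,M}(\x)$. Concretely, I would insert $\pm\,\beta \Pi_{\Xscr}\!\left[\x - \tfrac{1}{\beta}\nabla_{\x}V_{\alpha,\epsilon,M}(\x)\right]$ inside the norm defining $G_{\beta}(\x)$, which is a zero addition but exposes the desired decomposition.

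Next, I would apply the elementary inequality $\|a+b\|^2 \le 2\|a\|^2 + 2\|b\|^2$. The first resulting piece is immediately $2\|\tilde{G}_{\beta,\epsilon,M}(\x)\|^2$ by Definition~\ref{def:res_maps}. For the second piece, I would use the non-expansivity of the Euclidean projector $\Pi_{\Xscr}$: since $\nabla_{\x}V_{\alpha,\epsilon,M}(\x) = \nabla_{\x}V_{\alpha}(\x) + \mathbf{e}_{\epsilon,M}(\x)$, the two arguments to the projection differ by exactly $\tfrac{1}{\beta}\mathbf{e}_{\epsilon,M}(\x)$. Multiplying by the outer factor $\beta$ yields a bound of $\|\mathbf{e}_{\epsilon,M}(\x)\|$, and squaring gives $2\|\mathbf{e}_{\epsilon,M}(\x)\|^2$ after combining with the factor of $2$ from the triangle-type inequality.

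Combining these two bounds produces the stated inequality. There is no serious obstacle here; the result is essentially a direct manipulation using only (i) the algebraic identity that relates $\nabla V_{\alpha,\epsilon,M}$ and $\nabla V_\alpha$ via $\mathbf{e}_{\epsilon,M}$, (ii) the inequality $\|a+b\|^2 \le 2\|a\|^2+2\|b\|^2$, and (iii) the $1$-Lipschitz (non-expansive) property of $\Pi_{\Xscr}$. The only mild care needed is to keep track of the factor $\beta$ both outside the norm (from the definition of the residual maps) and inside (as the coefficient of the gradient term), so that the $\beta$'s cancel cleanly when the projection non-expansivity is applied and one is left with a bound depending on $\|\mathbf{e}_{\epsilon,M}(\x)\|^2$ alone, independent of $\beta$.
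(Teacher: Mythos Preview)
Your proposal is correct and follows essentially the same route as the paper: add and subtract $\beta\Pi_{\Xscr}\!\left[\x-\tfrac{1}{\beta}\nabla_{\x}V_{\alpha,\epsilon,M}(\x)\right]$, apply $\|a+b\|^2\le 2\|a\|^2+2\|b\|^2$, identify the first piece as $\tilde{G}_{\beta,\epsilon,M}(\x)$, and bound the second via non-expansivity of $\Pi_{\Xscr}$ together with $\nabla_{\x}V_{\alpha,\epsilon,M}(\x)=\nabla_{\x}V_{\alpha}(\x)+{\bf e}_{\epsilon,M}(\x)$. Your observation about the $\beta$'s cancelling is exactly the point that makes the final bound independent of $\beta$.
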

    {This lays the foundation for analyzing} the sequences $\{\x_k\}$ and $G_{\beta}(\x_k)$ generated by the proposed method. We adopt a proof technique analogous to that employed in~\cite{beck14introduction}.   
\begin{lemma}\label{lem:inexact_proj_2}\em
    Suppose {Assumption}~\ref{ass:ass-1} holds. Let $\{\x_k \}$ be generated by Algorithm~\ref{algorithm:NI_inexact} with $\gamma_{k} < \frac{1}{L_{\uvs{1}}^{V_{\alpha}}}$ for $k \in \{1, \dots, K \}$. Then we have for all $k \in \{1, \dots, K\}$ 
    \begin{align} \label{eqn:residual_rel}
        V_{\alpha} \left( \x_{k+1} \right) \leq V_{\alpha} \left( \x_{k} \right) \uvs{- \left(1-{L_{1}^{V_{\alpha}}} \gamma_{k}\right)} \tfrac{\gamma_k}{4}\|G_{1/\gamma_0}{(\x_k)}\|^2 +\left(1 - \tfrac{{L_1^{V_{\alpha}}} \gamma_k}{2} \right) \gamma_k \|{\bf e}_{k,M_k}(\x_k)\|^2.
    \end{align}
\end{lemma}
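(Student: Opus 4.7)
The plan is to derive the inequality as a \emph{perturbed descent lemma}, starting from the standard $L$-smooth descent inequality for $V_{\alpha}$ (available via Proposition~\ref{prop:V_alpha}(iii)) and carefully accounting for the error injected by using the inexact estimator $\nabla V_{\alpha,\epsilon_k,M_k}$ in the projection step. Concretely, I would first write
\begin{align*}
V_{\alpha}(\x_{k+1}) \leq V_{\alpha}(\x_k) + \langle \nabla V_{\alpha}(\x_k), \x_{k+1}-\x_k\rangle + \tfrac{L_1^{V_{\alpha}}}{2}\|\x_{k+1}-\x_k\|^2,
\end{align*}
and then decompose $\nabla V_{\alpha}(\x_k) = \nabla V_{\alpha,\epsilon_k,M_k}(\x_k) - {\bf e}_{\epsilon_k,M_k}(\x_k)$ using Definition~\ref{def:stoch_errors2}.

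The workhorse inequality in the next step is the standard projection inequality applied to $\x_{k+1} = \Pi_{\Xscr}[\x_k - \gamma_k\nabla V_{\alpha,\epsilon_k,M_k}(\x_k)]$: taking $\x_k \in \Xscr$ as the test point yields $\langle \nabla V_{\alpha,\epsilon_k,M_k}(\x_k), \x_{k+1}-\x_k\rangle \leq -\tfrac{1}{\gamma_k}\|\x_{k+1}-\x_k\|^2$. Substituting this and bounding the residual error cross-term $\langle {\bf e}_{\epsilon_k,M_k}(\x_k), \x_k-\x_{k+1}\rangle$ by Young's inequality with parameter $\gamma_k$, i.e.\ by $\tfrac{\gamma_k}{2}\|{\bf e}_{\epsilon_k,M_k}(\x_k)\|^2 + \tfrac{1}{2\gamma_k}\|\x_{k+1}-\x_k\|^2$, collapses everything to
\begin{align*}
V_{\alpha}(\x_{k+1}) \leq V_{\alpha}(\x_k) - \tfrac{1-L_1^{V_{\alpha}}\gamma_k}{2\gamma_k}\|\x_{k+1}-\x_k\|^2 + \tfrac{\gamma_k}{2}\|{\bf e}_{\epsilon_k,M_k}(\x_k)\|^2.
\end{align*}

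To reshape this into the claimed form, I would observe that $\x_k - \x_{k+1} = \gamma_k \tilde{G}_{1/\gamma_k,\epsilon_k,M_k}(\x_k)$ by Definition~\ref{def:res_maps}, so the first residual-type term becomes $\tfrac{(1-L_1^{V_{\alpha}}\gamma_k)\gamma_k}{2}\|\tilde{G}_{1/\gamma_k,\epsilon_k,M_k}(\x_k)\|^2$. Lemma~\ref{bd_G} then converts this into a lower bound on $\|G_{1/\gamma_k}(\x_k)\|^2$ at the expense of an additional $\|{\bf e}_{\epsilon_k,M_k}(\x_k)\|^2$ term, and the classical monotonicity of the natural residual in its parameter $\beta$ (under the implicit requirement $\gamma_k \leq \gamma_0$ forced by square-summability of the stepsize sequence) yields $\|G_{1/\gamma_k}(\x_k)\|\geq \|G_{1/\gamma_0}(\x_k)\|$. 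Folding the two $\|{\bf e}\|^2$ contributions together gives the combined coefficient $\tfrac{\gamma_k}{2}+\tfrac{(1-L_1^{V_{\alpha}}\gamma_k)\gamma_k}{2}=\gamma_k(1-\tfrac{L_1^{V_{\alpha}}\gamma_k}{2})$, exactly as stated.

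The main obstacle, as I see it, is bookkeeping: one must pick the Young's parameter (the value $\gamma_k$ above, equivalently constant $a=1$) so that after applying Lemma~\ref{bd_G} the surplus error term combines cleanly with the pre-existing $\tfrac{\gamma_k}{2}\|{\bf e}\|^2$ to produce the coefficient $(1-\tfrac{L_1^{V_{\alpha}}\gamma_k}{2})\gamma_k$; any other split will leave mismatched fractions. A secondary subtlety is justifying the replacement of $\|G_{1/\gamma_k}\|$ by $\|G_{1/\gamma_0}\|$, which relies on the well-known fact that $\beta\mapsto \beta\|\x - \Pi_{\Xscr}[\x-\beta^{-1}g]\|$ is non-decreasing, and which implicitly requires $\gamma_k \leq \gamma_0$.
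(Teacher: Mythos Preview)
Your proposal is correct and follows essentially the same route as the paper: descent lemma, projection inequality, Young's inequality with parameter $\gamma_k$, identification of $\x_k-\x_{k+1}=\gamma_k\tilde{G}_{1/\gamma_k,\epsilon_k,M_k}(\x_k)$, Lemma~\ref{bd_G}, and then the monotonicity of $\beta\mapsto\|G_\beta(\x)\|$ (the paper cites \cite[Lem.~9.12]{beck14introduction}) to pass from $G_{1/\gamma_k}$ to $G_{1/\gamma_0}$. Your observation that this last step implicitly needs $\gamma_k\leq\gamma_0$ is apt; the paper does not state it in the lemma hypotheses but relies on it, and all downstream results impose non-increasing stepsizes.
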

\begin{proof} 
    Recall that {in a prior result ~\eqref{eqn:V_smooth}, we prove that} {$V_{\alpha}(\bullet)$} is {$L$-smooth} on $\Xscr$ with parameter ${L_{1}^{V_{\alpha}}}$. By the descent lemma for any $k > 0$, we have that
\begin{align*}
    V_{\alpha} \left( \x_{k+1} \right) & \leq V_{\alpha} \left( \x_{k} \right)+ \nabla_{\x} V_{\alpha}(\x_k)^{\top} \left( \x_{k+1} - \x_{k} \right) + \tfrac{{L_{1}^{V_{\alpha}}}}{2} \|\x_{k+1} - \x_{k}\|^{2} \\
    & = V_{\alpha} \left( \x_{k} \right)+  \left( \nabla_x V_{\alpha}(\x_k)+{\bf e}_{k,M_k}\right)^{\top} \left( \x_{k+1} - \x_{k} \right)- {{\bf e}_{\epsilon_k,M_k}(\x_k)}^{\top} \left( \x_{k+1} - \x_{k} \right) \\
    & + \tfrac{{L_{1}^{V_{\alpha}}}}{2} \|\x_{k+1} - \x_{k}\|^{2}.  
\end{align*}
Invoking the properties of the Euclidean projection operator, we have that
\begin{align*}
    & \left(\x_{k} - \gamma_k \left(\nabla_{x}V_{\alpha}(\x_k) + {{\bf e}_{\epsilon_k,M_k}(\x_k)} \right) - \x_{k+1}\right)^{\top} \left(\x_k - \x_{k+1} \right) \leq 0 \\
&\implies \left( \nabla_{x}V_{\alpha}(\x_k) + {{\bf e}_{\epsilon_k,M_k}(\x_k)} \right)^{\top} \left( \x_{k+1} - \x_{k} \right) \leq -\tfrac{1}{ \gamma_{k}}\|\x_{k+1} - \x_{k}\|^2. 
\end{align*}
Additionally note that for any two vectors $\uu, \vv \in \mathbb{R}^n$, \fyy{it} holds that 
\begin{align*}
\left(\uu^{\top} \vv \right)= \left(\gamma_{k}^{1/2} \uu \right)^{\top}\left(\gamma_{k}^{-1/2}\vv \right) \leq\frac{1}{2} \left( \gamma_{k} \| \uu \|^2 + \frac{1}{\gamma_k} \| \vv \|^2 \right).
\end{align*}
We conclude that 
\begin{align*}
    -{{\bf e}_{\epsilon_k,M_k}(\x_k)}^{\top} \left( \x_{k+1} - \x_{k} \right) \leq \tfrac{\gamma_k}{2} \|{\bf e}_{\epsilon_k,M_k}(\x_k)\|^2 +  \tfrac{1}{2 \gamma_k}\|\x_{k+1} - \x_{k}\|^2.
\end{align*}
As a consequence of the three preceding inequalities we may conclude that 
\begin{align*}
    V_{\alpha} \left( \x_{k+1} \right) & \leq V_{\alpha} \left( \x_{k} \right) -\tfrac{1}{\gamma_k}\|\x_{k+1} - \x_{k}\|^2 + \tfrac{\gamma_k}{2 }\|{\bf e}_{\epsilon_k,M_k}(\x_k)\|^2 +\tfrac{1}{2 \gamma_k}\|\x_{k+1} - \x_{k}\|^2  +\tfrac{L_1^{V_{\alpha}}}{2}\|\x_{k+1} - \x_{k}\|^2   \\
    & =  V_{\alpha} \left( \x_{k} \right) + \left(-\tfrac{1}{2\gamma_k} + {\tfrac{L_1^{{V}_\alpha}}{2 }}\right)\|\x_{k+1} - \x_{k}\|^2 + \tfrac{\gamma_k}{2} \|{{\bf e}_{\epsilon_k,M_k}(\x_k)}\|^2 .
\end{align*}
Recall that $\gamma_k  < \frac{1}{{L_{1}^{V_{\alpha}}}}$  by assumption. This allows us to write 
\begin{align*}
    V_{\alpha} \left( \x_{k+1} \right) & \leq V_{\alpha} \left( \x_{k} \right) + \left( \tfrac{{L_{1}^{V_{\alpha}}}}{2}-\tfrac{1}{2\gamma_k}\right)\|\x_{k+1} - \x_{k}\|^2 + \tfrac{\gamma_k}{2 }\|{{\bf e}_{\epsilon_k,M_k}(\x_k)}\|^2 \\
    & = V_{\alpha} \left( \x_{k} \right) + \left( \tfrac{{L_1^{V_{\alpha}}}}{2} -\tfrac{1}{2\gamma_k}\right) {\gamma_k^2}\|\tilde{G}_{{1/\gamma_{k}},{\epsilon_k}, {M_k}}(\x_k)\|^2 + \tfrac{\gamma_k}{2 }\|{{\bf e}_{\epsilon_k,M_k}(\x_k)}\|^2 \\
    & = V_{\alpha} \left( \x_{k} \right) + \left( {L_{1}^{V_{\alpha}}} \gamma_k - 1 \right) \tfrac{\gamma_k}{2}\|\tilde{G}_{{1/\gamma_{k}},{\epsilon_k}, {M_k}}(\x_k)\|^2 + \tfrac{\gamma_k}{2 }\|{{\bf e}_{\epsilon_k,M_k}(\x_k)}\|^2 \\
    & \overset{\tiny \mbox{{Lemma}}~\ref{bd_G}}{\leq} V_{\alpha} \left( \x_{k} \right) + \left({L_{1}^{V_{\alpha}}} \gamma_k  -1 \right) \tfrac{\gamma_k}{4}\|G_{{1/\gamma_{k}}}(\x_k)\|^2 \\
	& + \left( 1-L_{1}^{V_{\alpha}} \gamma_k \right) \tfrac{\gamma_k}{2} \|{{\bf e}_{\epsilon_k,M_k}(\x_k)}\|^2 + \tfrac{\gamma_k}{2 }\|{{\bf e}_{\epsilon_k,M_k}(\x_k)}\|^2 \\
    &  \overset{\tiny ~\cite[\mbox{Ch. 9, Lem.9.12}]{beck14introduction}}{\leq} V_{\alpha} \left( \x_{k} \right) \uvs{- \left( 1-{L_{1}^{V_{\alpha}}} \gamma_k \right)} \tfrac{\gamma_k}{4}\|G_{1/\gamma_{0}}(\x_k)\|^2 +\left(1 - \tfrac{{L_{1}^{V_{\alpha}}} \gamma_k}{2} \right) \gamma_k \|{{\bf e}_{\epsilon_k,M_k}(\x_k)}\|^2,
\end{align*}
which is the desired result. 
\end{proof}
	We now present an almost sure convergence guarantee for the sequence generated by Algorithm~\ref{algorithm:NI_inexact}.
\begin{proposition}[Asymptotic guarantees for Alg.~\ref{algorithm:NI_inexact}]\em
    Let Assumption~\ref{ass:ass-1} holds. Let $\{\x_k\}$ be generated by Algorithm \ref{algorithm:NI_inexact}. Let $\{\gamma_k\}$ be a non-increasing sequence of stepsizes, \uvs{where $\gamma_0 < \frac{1}{L_1^{V_{\alpha}}}$ and} $\gamma_k > 0$ for all $k.$ Assume that 
      $\sum_{k=1}^{\infty} \mathbb{E}\left[\|{{\bf e}_{\epsilon_k,M_k}(\x_k)}\|^2 \mid \x_k \right] = \sum_{k=1}^{\infty} \left( \tfrac{(3N+2)N\sigma^2}{M_k} +{ \left( \left({2N+2} \right)\sum_{\nu =1}^{N} (L_{1}^{\nu})^2 + 4 \alpha \right) \epsilon_k}\right) <\infty$ almost surely.  Then  
 $\|G_{1/\gamma_{{0}}}(\x_k)\| \xrightarrow[k \to \infty]{a.s.} 0.$
\label{thm:almost_sure}
\end{proposition}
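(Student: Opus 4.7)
The plan is to apply the Robbins--Siegmund almost-supermartingale convergence theorem to the one-step descent inequality established in Lemma~\ref{lem:inexact_proj_2}, and then upgrade the resulting summability into pointwise convergence of the residual by exploiting the Lipschitzness of $G_{1/\gamma_{0}}(\bullet)$.

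First, I would take the conditional expectation of~\eqref{eqn:residual_rel} with respect to the natural filtration $\mathcal{F}_k := \sigma(\x_0,\ldots,\x_k)$, noting that $\x_k$ (and hence $G_{1/\gamma_0}(\x_k)$) is $\mathcal{F}_k$-measurable, and invoking Lemma~\ref{lem:stoch_error_var} to dominate $\mathbb{E}[\,\|{\bf e}_{\epsilon_k,M_k}(\x_k)\|^2 \mid \x_k\,]$ by the stated bound. Because $\gamma_k \leq \gamma_0 < 1/L_1^{V_{\alpha}}$, the coefficient $(1-L_1^{V_{\alpha}}\gamma_k)$ is bounded below by $\eta := 1-L_1^{V_{\alpha}}\gamma_0 > 0$. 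Since $V_{\alpha}(\x)\geq 0$ on $\Xscr$ by Proposition~\ref{thm:v_alpha_properties}(a) and the stochastic perturbation term is a.s.\ summable under the stated hypothesis, Robbins--Siegmund yields two almost-sure conclusions: (a) $\{V_{\alpha}(\x_k)\}$ converges, and (b) $\sum_{k=0}^{\infty}\gamma_k \|G_{1/\gamma_0}(\x_k)\|^2 < \infty$.

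To promote (b) into $\|G_{1/\gamma_0}(\x_k)\| \to 0$ almost surely, I would proceed in two steps. Under the (standard) condition $\sum_k \gamma_k = \infty$ implicit in the algorithmic setup, (b) already gives $\liminf_k \|G_{1/\gamma_0}(\x_k)\| = 0$. To rule out a positive $\limsup$, I would first establish that $G_{1/\gamma_0}(\bullet)$ is Lipschitz on $\Xscr$ with some constant $L_G$ (a direct consequence of Proposition~\ref{prop:V_alpha}(iii) and the non-expansivity of $\Pi_{\Xscr}$), and then argue by contradiction: if $\limsup_k \|G_{1/\gamma_0}(\x_k)\| \geq 2\delta > 0$ on an event of positive measure, I would inductively extract blocks of indices $[m_k,n_k)$ along which $\|G_{1/\gamma_0}(\x_j)\| \geq \delta$ and $\|G_{1/\gamma_0}(\x_{n_k})\| < \delta$. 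Lipschitzness of $G_{1/\gamma_0}$ forces $\sum_{j=m_k}^{n_k-1}\|\x_{j+1}-\x_j\| \geq \delta/L_G$, whereas the update rule combined with continuity of $\nabla V_{\alpha}$ on the compact set $\Xscr$ and the a.s.\ convergence $\|{\bf e}_j\| \to 0$ (a direct consequence of summability of $\|{\bf e}_j\|^2$) yields a uniform bound $\|\x_{j+1}-\x_j\| \leq \gamma_j \cdot C$. Since on the block $\|G_{1/\gamma_0}(\x_j)\|\geq \delta$, conclusion (b) forces $\sum_{j=m_k}^{n_k-1}\gamma_j \to 0$ as $k \to \infty$, so the accumulated displacement across the block vanishes, contradicting the Lipschitz lower bound.

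The main technical obstacle will be the block-extraction argument required to sharpen ``$\liminf =0$'' to ``$\lim =0$''; this is the delicate nonconvex part and relies on both the Lipschitzness of the residual and a uniform almost-sure bound on the inexact gradient norm, which must be assembled from compactness of $\Xscr$ together with the a.s.\ vanishing of $\|{\bf e}_k\|$. Once these ingredients are in place, the Robbins--Siegmund step is otherwise routine given the per-iteration descent inequality already established in Lemma~\ref{lem:inexact_proj_2}.
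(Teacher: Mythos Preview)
Your Robbins--Siegmund step matches the paper exactly: take conditional expectations in \eqref{eqn:residual_rel}, use $V_\alpha\geq 0$, the positivity of $(1-L_1^{V_\alpha}\gamma_k)$, and a.s.\ summability of the error term to conclude that $\{V_\alpha(\x_k)\}$ converges and the weighted residual sum is finite almost surely.

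Where you diverge is in the passage from summability to $\|G_{1/\gamma_0}(\x_k)\|\to 0$. The paper does \emph{not} carry the $\gamma_k$ weight forward; it asserts (using that the descent coefficient is bounded below) that Robbins--Siegmund already yields $\sum_{k}\|G_{1/\gamma_0}(\x_k)\|^2<\infty$ a.s., and then the conclusion is immediate since any summable nonnegative sequence tends to zero. The short contradiction argument at the end of the paper's proof is nothing more than a restatement of that elementary fact. By contrast, you retain the weight $\gamma_k$, import an additional hypothesis $\sum_k\gamma_k=\infty$ that does not appear in the proposition, and then run a $\liminf/\limsup$ block-extraction argument using Lipschitzness of $G_{1/\gamma_0}(\bullet)$ and a.s.\ vanishing of $\|{\bf e}_k\|$. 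That machinery is correct in spirit and is the standard route when only $\sum_k\gamma_k\|G_k\|^2<\infty$ is available, but it is substantially heavier than what the paper does and relies on an assumption the statement does not grant you. If you accept the paper's step of dropping the $\gamma_k$ weight, the Lipschitz/block-extraction argument is entirely unnecessary.
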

\begin{proof}
First, recall from Proposition~\ref{thm:v_alpha_properties} that $V_{\alpha}^* = \inf_{\x \in \Xscr} V_{\alpha}(\x) = 0.$  By taking conditional expectations with respect to $\x_k$ on the both sides of the inequality \eqref{lem:inexact_proj_2}, we have 
\begin{align}
\mathbb{E}\left[V_{\alpha}\left( \x_{k+1} \right) - V_{\alpha}^*  \mid \x_k\right] & \leq V_{\alpha} \left( \x_{k} \right) - \left(1 - {L_{1}^{V_{\alpha}}} \gamma_k\right) \tfrac{\gamma_k}{4}\|G_{1/\gamma_{0}}(\x_k)\|^2 \\
&  +\left(1 - \tfrac{{L_{1}^{V_{\alpha}}} \gamma_k}{2} \right) \gamma_k \mathbb{E}\left[\|{{\bf e}_{\epsilon_k,M_k}(\x_k)}\|^2 \mid \x_k \right].
    \end{align} 
\uvs{Since $\left(1 - {L_{1}^{V_{\alpha}}} \gamma_k\right) > 0$ for any $k$ by choice of $\gamma_k$}, the assumed sumability of
    $\sum_{k=1}^{\infty} \mathbb{E}\left[\|{{\bf e}_{\epsilon_k,M_k}(\x_k)}\|^2 \mid \x_k \right]$ ({which holds by} our additional assumption), and the nonnegativity of
    $V_{\alpha}(\x_{k}) - V^*_\alpha = V_{\alpha}(\x_{k})$, we have \uvs{by invoking the Robbins-Siegmund Lemma} that
    $\{V_{\alpha}(\x_{k}) -V^*_\alpha\}$ is convergent almost surely and
    $$\sum_{k=1}^{\infty} \left\|{G}_{1/\gamma_{{0}}}(\x_k)\right\|^2 < \infty$$
almost surely. It remains to show that with probability one, $\|G_{1/\gamma_{{0}}}(\x_k)\| \to 0$ as $k \to \infty$.
We proceed  with a proof by contradiction. Suppose for $\omega \in \Omega_1 \subset
\Omega$ and $\mu(\Omega_1) > 0$ (\fyy{i.e.,} with finite probability),
$\|G_{1/\gamma_{{0}}}(\x_k)\| \xrightarrow{k \in \Kscr(\omega)} \epsilon(\omega)
> 0$ where $\Kscr(\omega)$ is a random subsequence. Consequently, for every
$\omega \in \Omega_1$ and $\tilde{\varepsilon} > 0$, there exists $K(\omega)$ such
that $k \geq K(\omega)$, $\| G_{1/\gamma_{{0}}}(\x_k) \| \geq
\tfrac{\epsilon(\omega)}{2}$. Consequently, we have that $\sum_{k \to
\infty} \|G_{1/\gamma_{{0}}}(\x_k)\|^2 \geq \sum_{k \in \Kscr(\omega)}
\|G_{1/\gamma_{{0}}} (\x_k)\|^2 \geq \sum_{k \in \Kscr(\omega), k \geq
K(\omega)} \|G_{1/\gamma_{{0}}} (\x_k)\|^2 = \infty$ with finite probability.
But this leads to a contradiction, implying that $\|G_{1/\gamma_{{0}}}(\x_k)\|^2
\xrightarrow[k \to \infty]{a.s.} 0.$
\end{proof}
\smallskip
    We have proven almost sure convergence of Algorithm \ref{algorithm:NI_inexact} provided that one may access $\epsilon-$approximate solutions and batch size sequences such that $\sum_{k=1}^{\infty} \mathbb{E}\left[\|{\bf e}_{\epsilon_k,M_k}(\x_k)\|^2 \mid \x_k \right] < \infty$ almost surely. 
\smallskip
Next, we present the main rate and complexity result for our stated inexact scheme for computing Nash equilibria. We present the rates in terms of $\epsilon_k$, so that they are more general and may be interpreted for schemes that achieve different levels of asymptotic accuracy.  Before proceeding, we prove the following lemma which we employ to produce rate statements {an iterate with randomly specified indexing} for different choices of stepsize sequences $\{\gamma_k\}.$ \\
\begin{lemma}\label{lemma:bound_res}\em
	Let Assumptions \ref{ass:ass-1} hold. Let $\{\x_k\}$ be generated by Algorithm~\ref{algorithm:NI_inexact}. Let $R_{\ell, K}$ be a random integer on $\{ \lceil\lambda K \rceil := \ell, \dots, K-1 \}$ for some $\lambda \in [0.5,1)$, {with probability mass function given by
		\begin{align}\label{eqn:p_r_def}
		\mathbb{P}_{{R_{\ell,K}}}(R_{\ell,K} = j) = {\tfrac{\gamma_j}{\sum_{i=\ell}^{K-1}   \gamma_i}}
		\end{align}
		for all $\ell\leq j\leq K-1$.} Let $\{\gamma_k\}$ be a non-increasing sequence (\fyy{i.e.,} constant or diminishing) such that $\gamma_0 < \uvs{\frac{1}{{L_{1}^{V_{\alpha}}}}}$ and let the batchsize sequence $\{M_k\}$ be non-decreasing (\fyy{i.e.,} constant or increasing). 
Then the following inequality holds.
	 \begin{align}
		\label{eqn:general_bound}
		 \mathbb{E}\left[\|G_{1/\gamma_0}(\x_{R_{\ell,K}})\|{^2}\right]   & \leq   \tfrac{ 4 \sum_{k=\ell}^{{K-1}} \gamma_k \left(\tfrac{20N\sigma^2}{M_{{k}}} +  \left( \left({2N+2} \right)\sum_{\nu =1}^{N} (L_{1}^{\nu})^2 + 4 \alpha \right) \epsilon_k\right) + \mathbb{E}\left[V_{\alpha} \left( \x_{\ell} \right)\right]}{\left(1 - {L_{1}^{V_{\alpha}}} \gamma_0 \right) \left(  \sum_{k = \ell} ^{K-1} \gamma_k \right)}.
		\end{align}
\end{lemma}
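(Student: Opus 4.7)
The strategy is to build directly on the one-step descent inequality \eqref{eqn:residual_rel} from Lemma~\ref{lem:inexact_proj_2}, take total expectations, telescope over $k\in\{\ell,\dots,K-1\}$, and then exploit the definition of the random index $R_{\ell,K}$ to rewrite the weighted sum of expected squared residuals as $\mathbb{E}[\|G_{1/\gamma_0}(\x_{R_{\ell,K}})\|^2]$.

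First, since $\{\gamma_k\}$ is non-increasing with $\gamma_0<1/L_1^{V_{\alpha}}$, every $\gamma_k\in(0,\gamma_0]$ satisfies $1-L_1^{V_{\alpha}}\gamma_k\ge 1-L_1^{V_{\alpha}}\gamma_0>0$ and $1-L_1^{V_{\alpha}}\gamma_k/2\le 1$. Using these two uniform bounds on the coefficients in \eqref{eqn:residual_rel} and taking total expectations, I obtain for each $k$ the cleaner inequality
\begin{align*}
(1-L_1^{V_{\alpha}}\gamma_0)\,\tfrac{\gamma_k}{4}\,\mathbb{E}\bigl[\|G_{1/\gamma_0}(\x_k)\|^2\bigr] \;\le\; \mathbb{E}[V_{\alpha}(\x_k)]-\mathbb{E}[V_{\alpha}(\x_{k+1})] + \gamma_k\,\mathbb{E}\bigl[\|{\bf e}_{\epsilon_k,M_k}(\x_k)\|^2\bigr].
\end{align*}

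Next, I sum this inequality from $k=\ell$ to $k=K-1$. The two $V_{\alpha}$-terms on the right telescope to $\mathbb{E}[V_{\alpha}(\x_\ell)]-\mathbb{E}[V_{\alpha}(\x_K)]$, and the latter can be dropped by nonnegativity of $V_{\alpha}$ (Proposition~\ref{thm:v_alpha_properties}(a)). Plugging the moment bound from Lemma~\ref{lem:stoch_error_var} into each error term produces
\begin{align*}
\tfrac{1-L_1^{V_{\alpha}}\gamma_0}{4}\sum_{k=\ell}^{K-1}\gamma_k\,\mathbb{E}\bigl[\|G_{1/\gamma_0}(\x_k)\|^2\bigr] \;\le\; \mathbb{E}[V_{\alpha}(\x_\ell)] + \sum_{k=\ell}^{K-1}\gamma_k\!\left(\tfrac{(3N+2)N\sigma^2}{M_k}+\bigl((2N+2)\textstyle\sum_{\nu=1}^{N}(L_1^{\nu})^2+4\alpha\bigr)\epsilon_k\right).
\end{align*}

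Finally, the probability mass function in \eqref{eqn:p_r_def} is the one that makes
\begin{align*}
\mathbb{E}\bigl[\|G_{1/\gamma_0}(\x_{R_{\ell,K}})\|^2\bigr] \;=\; \frac{\sum_{k=\ell}^{K-1}\gamma_k\,\mathbb{E}\bigl[\|G_{1/\gamma_0}(\x_k)\|^2\bigr]}{\sum_{k=\ell}^{K-1}\gamma_k},
\end{align*}
so dividing the previous display through by $\tfrac{1-L_1^{V_{\alpha}}\gamma_0}{4}\sum_{k=\ell}^{K-1}\gamma_k$ yields the asserted bound \eqref{eqn:general_bound}. I do not expect a serious obstacle here; the argument is essentially bookkeeping, and the only delicate point is using monotonicity of $\{\gamma_k\}$ to extract the uniform lower bound $1-L_1^{V_{\alpha}}\gamma_k\ge 1-L_1^{V_{\alpha}}\gamma_0$ from \eqref{eqn:residual_rel}. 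The non-decreasing sample-size assumption plays no explicit role in the derivation itself but controls the behavior of the accumulated error term $\sum_{k}\gamma_k/M_k$ when this lemma is later specialized to concrete choices of $\{\gamma_k\}$ and $\{M_k\}$. Any small mismatch between the constant $(3N+2)N$ arising from Lemma~\ref{lem:stoch_error_var} and the coefficient $20N$ appearing in \eqref{eqn:general_bound} can be absorbed by a crude numerical majorization in the regime of interest for the subsequent complexity analysis.
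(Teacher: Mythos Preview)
Your proposal is correct and follows essentially the same route as the paper: start from \eqref{eqn:residual_rel}, use $\gamma_k\le\gamma_0$ to replace $(1-L_1^{V_\alpha}\gamma_k)$ by $(1-L_1^{V_\alpha}\gamma_0)$ and $(1-L_1^{V_\alpha}\gamma_k/2)$ by $1$, take expectations, telescope over $k=\ell,\dots,K-1$, drop $\mathbb{E}[V_\alpha(\x_K)]\ge 0$, insert the bound from Lemma~\ref{lem:stoch_error_var}, and then recognize the $\gamma_k$-weighted average as $\mathbb{E}[\|G_{1/\gamma_0}(\x_{R_{\ell,K}})\|^2]$ via \eqref{eqn:p_r_def}. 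Your version is in fact slightly more streamlined than the paper's, which passes through $\|G_{1/\gamma_k}(\x_k)\|$ and invokes the monotonicity of $\beta\mapsto\|G_\beta(\x)\|$ before arriving at the same place; since \eqref{eqn:residual_rel} already carries $G_{1/\gamma_0}$, that detour is unnecessary and your direct bound on the coefficient suffices. Your observations that the non-decreasing $\{M_k\}$ hypothesis is not used in the lemma itself, and that the constant $(3N+2)N$ from Lemma~\ref{lem:stoch_error_var} does not literally match the $20N$ in the statement, are both accurate; the paper's own proof in fact ends with $(3N+2)N$ rather than $20N$.
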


	\begin{proof}
		\noindent  Consider the inequality \eqref{eqn:residual_rel}.
		\begin{align*}
			V_{\alpha} \left( \x_{k+1} \right) \leq V_{\alpha} \left( \x_{k} \right) \uvs{- \left(1-{L_{1}^{V_{\alpha}}} \gamma_k \right)} \tfrac{\gamma_k}{4}\|G_{1/\gamma_0}(\x_k)\|^2 +\left(1 - \tfrac{{L_{1}^{V_{\alpha}}} \gamma_k}{2} \right) \gamma_k \|{{\bf e}_{\epsilon_k,M_k}(\x_k)}\|^2.
		\end{align*}
		Rearranging, we have
		\begin{align*} \left(1- {L_{1}^{V_{\alpha}} \gamma_k }\right) \tfrac{\gamma}{4}\|G_{1/\gamma_k}(\x_{k})\|^2   \leq V_{\alpha} \left( \x_{k} \right) - V_{\alpha}\left( \x_{k+1} \right)   +\left(1 - \tfrac{{L_{1}^{V_{\alpha}}} \gamma_k}{2} \right) \gamma_k \|{{\bf e}_{\epsilon_k,M_k}(\x_k)}\|^2 \, .
		\end{align*}
		Summing from $k =\ell, \ldots, K-1$ where $\ell\triangleq  \lceil \lambda K\rceil$ we have
		\begin{align*} \sum_{k= \ell}^{K-1}  \left(1- {L_{1}^{V_{\alpha}} \gamma_k }\right) \tfrac{\gamma_k}{4} \|G_{1/\gamma_k}(\x_{k})\|^2   \leq V_{\alpha} \left( \x_{\ell} \right) - V_{\alpha}\left( \x_{K} \right)   +  \sum_{k= \ell}^{K-1} \left(1 - \tfrac{{L_{1}^{V_{\alpha}}} \gamma_k}{2} \right) \gamma_k \|{{\bf e}_{\epsilon_k,M_k}(\x_k)}\|^2.
		\end{align*}
		Taking expectations with respect to the iterate trajectory {on} both sides, and once more noting that $V_{\alpha}(\x_K) \geq V_{\alpha}^* = 0$, we obtain
		\begin{align*}
			\sum_{k= \ell} ^{K-1} \left(1 - {L_{1}^{V_{\alpha}}} \gamma_k \right) \tfrac{\gamma_k}{4} \mathbb{E}\left[\|G_{1/\gamma_k}(\x_{k})\|{^2}\right]   \leq  \sum_{k= \ell}^{K-1} \left(1 - \tfrac{{L_{1}^{V_{\alpha}}} \gamma_k}{2} \right) \gamma_k \mathbb{E}\left[\|{{\bf e}_{\epsilon_k,M_k}(\x_k)}\|^2\right] + \mathbb{E}\left[V_{\alpha} \left( \x_{\ell} \right)\right] - V_{\alpha}^*.
		\end{align*}
		 First, observe that for a non-increasing stepsize sequence 
		 $\{ \gamma_k\}$, $\|G_{1/\gamma_k}(\x_{k})\|^2 < \|G_{1/\gamma_{k+1}}({\x_{k}})\|^2 $ {(see~\cite{beck14introduction})}. With this in mind and by invoking the definition of the probability measure $\mathbb{P}_R$, note that
		\begin{align}
			\label{eqn:error_bound}
			\left(1 - {L_{1}^{V_{\alpha}}} \gamma_0 \right)  \left(\sum_{k = \ell} ^{K-1} \tfrac{\gamma_k}{4} \right)   \left( \mathbb{E} \left[\mathbb{E}_{R}\left[\|G_{1/\gamma_0}(\x_{R_{\ell,K}})\|{^2}\right] \right] \right)  & \leq \sum_{k= \ell} ^{K-1} \left(1 - {L_{1}^{V_{\alpha}}} \gamma_k \right) \tfrac{\gamma_k}{4} \mathbb{E}\left[\|G_{1/\gamma_k}(\x_{k})\|{^2}\right] \\
\notag & \leq   \sum_{k= \ell}^{K-1} \left(1 - \tfrac{{L_{1}^{V_{\alpha}}} \gamma_k}{2} \right) \gamma_k \mathbb{E}\left[\|{{\bf e}_{\epsilon_k,M_k}(\x_k)}\|^2\right] \\
\notag		& + \mathbb{E}\left[V_{\alpha} \left( \x_{\ell} \right)\right] - V_{\alpha}^*.
		\end{align}
	By the fact that $\mathbb{E}\left[\|{{\bf e}_{\epsilon_k,M_k}(\x_k)}\|^2 \mid \x_k\right] \leq   {\tfrac{{(3N + 2)N\sigma}^2}{M_k}  + \left( \left({2N+2} \right)\sum_{\nu =1}^{N} {(L_{1}^{\nu})^2} + 4 \alpha \right) \epsilon_k }$ and by observing that $\left(1 - \tfrac{{L_{1}^{V_{\alpha}}} \gamma_k}{2} \right) < 1$, we obtain the desired result.
		\begin{align*}
			\mathbb{E}\left[\|G_{1/\gamma_0}(\x_{R_{\ell,K}})\|{^2}\right]   & \leq   \tfrac{4\sum_{k=\ell}^{{K-1}} \gamma_k \left({\frac{{(3N + 2)N\sigma}^2}{M_k}  + \left( \left({2N+2} \right)\sum_{\nu =1}^{N} {(L_{1}^{\nu})^2} + 4 \alpha \right) \epsilon_k }\right) + \mathbb{E}\left[V_{\alpha} \left( \x_{\ell} \right)\right]}{\left(1 - {L_{1}^{V_{\alpha}}} \gamma_0 \right) \left(  \sum_{k = \ell} ^{K-1} \gamma_k \right)}. 
		\end{align*}
	\end{proof}
 A few remarks about this error bound bear mentioning. First, in the
above claim, $\mathbb{E}\left[\bullet\right]$ represents an expectation over
the set of replications and $\fyy{{R_{\ell,K}}}$. Note that it is not a guarantee
that the last iterate $\x_K$ will satisfy any particular bound but instead
provides a guarantee on $\x_{R_{\ell,K}}$.  With this error-bound in hand,
we present rate statements and provide complexity guarantees for a variety of
step size, mini-batch, and inexactness sequences. 
\begin{theorem}[{\bf Convergence rate: Constant {stepsize} rule}]\label{thm:NI_inexact_rate_constant}\em
    Let Assumption \ref{ass:ass-1} hold. Let $\{\x_k\}$ be generated by {Algorithm} \ref{algorithm:NI_inexact}.
	Let $\gamma_k =\uvs{\gamma_0}$ \uvs{for every $k \ge 0$}, where $\uvs{\gamma_0} \, < \, \uvs{\frac{1}{ L_{{1}}^{V_{\alpha}}}}$. {Let $R_{\ell, K}$ be a random integer on $\{ \lceil\lambda K \rceil := \ell, \dots, K-1 \}$ for some $\lambda \in [0.5,1)$, where $K> \tfrac{2}{1-\lambda}$}, $\ell\triangleq  \lceil \lambda K\rceil$, and  $M_k := \lceil 1 + a k  \rceil$ for all $k$  for some user-defined sampling growth rate $a > 0 $. 

     \noindent {\bf{(i)}} {Then the following holds for any $K,\ell,\gamma_0$ as prescribed above.}
    \begin{align*}
	  	\mathbb{E}\left[\|G_{1/\gamma_0}(\x_{R_{\ell,K}})\|{^2}\right]   & \leq \tfrac{8}{K-\ell}  \left(\tfrac{{(1-\ln(\lambda))(3N + 2)N\sigma^2}}{a}  +  { \left(\left({2N+2} \right)\sum_{\nu =1}^{N} {(L_{1}^{\nu})^2} + 4 \alpha \right) \sum_{k=\ell}^{{K-1}}  \epsilon_k} + \tfrac{2 \mathbb{E}\left[V_{\alpha} \left( \x_{\ell} \right)\right] }{{L_{1}^{V_{\alpha}}}} \right).
\end{align*}
\noindent {\bf{(ii)}} {Suppose $\uvs{\gamma_0 = \frac{1}{ 2L_{{1}}^{V_{\alpha}}}}$ and $\epsilon_k = \tfrac{p}{k}$ where $p > 0$.} 
{Let $\varepsilon > 0 $ and an $\varepsilon$-solution satisfies $\mathbb{E}\left[\|G_{1/\gamma_0}(\x_{R_{\ell,K}})\|\right] < \varepsilon$ for some $K_{\varepsilon} > 0$.} Then {the iteration and sample-complexity for computing an $\varepsilon$-solution are $\mathcal{O}(\varepsilon^{-2})$ and $\mathcal{O}(\varepsilon^{-4})$, respectively.} 
 \end{theorem}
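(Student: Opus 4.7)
The plan is to derive both parts by specializing the general bound \eqref{eqn:general_bound} of Lemma \ref{lemma:bound_res} to the regime $\gamma_k \equiv \gamma_0$, the growing mini-batch $M_k = \lceil 1 + ak \rceil$, and a diminishing inexactness schedule $\epsilon_k$. For Part (i), substituting $\gamma_k \equiv \gamma_0$ collapses the weighted stepsize sum to $\sum_{k=\ell}^{K-1}\gamma_k = (K-\ell)\gamma_0$, so the denominator of \eqref{eqn:general_bound} becomes $(1-L_{1}^{V_{\alpha}}\gamma_0)(K-\ell)\gamma_0$. The three summands in the numerator split naturally into (a) a noise term proportional to $\sum_{k=\ell}^{K-1}\tfrac{1}{M_k}$, (b) an inexactness term proportional to $\sum_{k=\ell}^{K-1}\epsilon_k$, and (c) the initial-gap term $\mathbb{E}[V_{\alpha}(\x_{\ell})]$. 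The key computation is a bound on the harmonic-type sum $\sum_{k=\ell}^{K-1}\tfrac{1}{1+ak}$: an integral comparison gives $\sum_{k=\ell}^{K-1}\tfrac{1}{1+ak} \leq \tfrac{1}{a}\left(1 + \ln(K/\ell)\right)$, and since $\ell \geq \lambda K$ one has $\ln(K/\ell) \leq -\ln\lambda$, producing the coefficient $(1-\ln\lambda)/a$ in the stated bound.

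For Part (ii), I would fix $\gamma_0 = 1/(2L_{1}^{V_{\alpha}})$ so that $(1 - L_{1}^{V_{\alpha}}\gamma_0) = 1/2$ is absorbed into the leading prefactor $8/(K-\ell)$, and substitute $\epsilon_k = p/k$. The inexactness sum then satisfies $\sum_{k=\ell}^{K-1}\tfrac{p}{k} \leq p\ln(K/\ell) \leq -p\ln\lambda$, uniformly bounded in $K$. The condition $K > 2/(1-\lambda)$ guarantees $K - \ell \geq (1-\lambda)K/2$, so the bound from Part (i) takes the form $\mathbb{E}[\|G_{1/\gamma_0}(\x_{R_{\ell,K}})\|^2] \leq C/K$ for a constant $C$ depending on $N$, $\sigma$, $\alpha$, $L_{1}^{V_{\alpha}}$, $a$, $p$, $\lambda$, and a $K$-independent upper bound on $\mathbb{E}[V_{\alpha}(\x_{\ell})]$. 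Such a uniform bound is obtained from the monotone descent inequality of Lemma \ref{lem:inexact_proj_2} together with the Lipschitz continuity of $V_{\alpha}$ on the compact feasible set $\Xscr$ from Proposition \ref{prop:V_alpha}.

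To translate this into an $\varepsilon$-solution guarantee in the sense $\mathbb{E}[\|G_{1/\gamma_0}(\x_{R_{\ell,K}})\|] < \varepsilon$, I would invoke Jensen's inequality, $(\mathbb{E}[\|G\|])^2 \leq \mathbb{E}[\|G\|^2] \leq C/K$. Imposing $C/K \leq \varepsilon^2$ yields the iteration complexity $K_{\varepsilon} = \mathcal{O}(\varepsilon^{-2})$. The sample complexity aggregates two per-iteration contributions: the $M_k = \mathcal{O}(k)$ gradient samples used by the mini-batch estimator \eqref{def:nablaV}, and the $\sum_{\nu=1}^{N} T_k^{\nu} = \mathcal{O}(\epsilon_k^{-1}) = \mathcal{O}(k)$ samples consumed by Algorithm \ref{algorithm:stochastic_approximation} to produce the $\epsilon_k$-approximate best-response. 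Summing over $k = 0, \ldots, K_{\varepsilon}-1$ yields $\mathcal{O}(K_{\varepsilon}^{2}) = \mathcal{O}(\varepsilon^{-4})$ total samples, matching the claimed complexity.

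The main obstacle is not any new analytical idea but the bookkeeping needed to match the precise constants appearing in Part (i) (in particular, the $(1-\ln\lambda)$ factor, the prefactor $8$, and the factor $2/L_{1}^{V_{\alpha}}$ attached to $\mathbb{E}[V_{\alpha}(\x_{\ell})]$), together with the need to establish a $K$-independent upper bound on $\mathbb{E}[V_{\alpha}(\x_{\ell})]$ since $\ell = \lceil \lambda K\rceil$ itself grows with $K$. The latter point is resolved by exploiting continuity and boundedness of $V_{\alpha}$ on the compact set $\Xscr$, reducing the initial-gap contribution to a single iteration-independent constant.
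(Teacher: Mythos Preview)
Your proposal is correct and follows essentially the same route as the paper: specialize Lemma~\ref{lemma:bound_res} to $\gamma_k\equiv\gamma_0$, bound the harmonic-type sum $\sum_{k=\ell}^{K-1}\tfrac{1}{1+ak}$ via integral comparison to obtain the $(1-\ln\lambda)/a$ coefficient, then for part~(ii) substitute $\gamma_0 = 1/(2L_1^{V_\alpha})$ and $\epsilon_k = p/k$, invoke Jensen, and sum the batch sizes. Your explicit attention to the $K$-independence of $\mathbb{E}[V_\alpha(\x_\ell)]$ (via boundedness of $V_\alpha$ on the compact set $\Xscr$) and to the inner-loop sample cost of Algorithm~\ref{algorithm:stochastic_approximation} is in fact more careful than the paper, which leaves both points implicit.
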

\begin{proof}
\noindent {\bf (i)} Consider the inequality \eqref{eqn:error_bound}, restated here for clarity:
\begin{align*}
	\mathbb{E}\left[\|G_{1/\gamma_0}(\x_{R_{\ell,K}})\|{^2}\right]   & \leq   \tfrac{4\sum_{k=\ell}^{K} \gamma_k \left({\tfrac{{(3N + 2)N\sigma}^2}{M_k}  + \left( \left({2N+2} \right)\sum_{\nu =1}^{N} {(L_{1}^{\nu})^2} + 4 \alpha \right) \epsilon_k }\right) + \mathbb{E}\left[V_{\alpha} \left( \x_{\ell} \right)\right]}{\left(1 - {L_{1}^{V_{\alpha}}} \gamma_0 \right) \left(  \sum_{k = \ell} ^{K-1} \gamma_k \right)}. 
\end{align*}
To simplify the notation, we denote the
quantity associated with the variance of the
estimator of $\nabla_{\x}V_{\alpha}(\x)$ by
{defining $\rho$ and $\mu$ such that $\rho
= {(3N + 2)N\sigma}^2$ and}
$\mu := \left({2N+2} \right)\sum_{\nu =1}^{N}
{(L_{1}^{\nu})^2} + 4 \alpha $. With this
notation in hand, our bound {can be more simply stated as} 
  \begin{align*}
  	\mathbb{E}\left[\|G_{1/\gamma_0}(\x_{R_{\ell,K}})\|{^2}\right]   & \leq   \tfrac{4\sum_{k=\ell}^{{K-1}} \gamma_k \left(\frac{\rho}{M_k}  + \mu \epsilon_k \right) + \mathbb{E}\left[V_{\alpha} \left( \x_{\ell} \right)\right]}{\left(1 - {L_{1}^{V_{\alpha}}} \gamma_0 \right) \left(  \sum_{k = \ell} ^{K-1} \gamma_k \right)}. 
  \end{align*}
  Note that $K > \frac{2}{(1- \lambda)}$ implies that  
  \begin{align*}
  \uvs{\sum_{k=\ell}^{K-1} \tfrac{1}{\uvs{M_k}} \le}	\sum_{k=\ell}^{K-1} \tfrac{1}{ak+1}  = \sum_{k=\ell}^{K-1}  \tfrac{1}{a(k+\frac{1}{a})}  & = a^{-1} \sum_{k=\ell}^{K-1} \tfrac{1}{(k+\frac{1}{a})}\\
  	& \leq a^{-1} \sum_{k=\ell}^{K-1} \tfrac{1}{k} \leq a^{-1}\left( \tfrac{1}{\ell} + \ln \left( K \right) - \ln \left(\ell \right) \right) \\
  	&  \leq a^{-1}\left(1 + \ln \left(\tfrac{K}{\lceil \lambda K \rceil} \right)\right) \leq a^{-1}(1 - \ln(\lambda)).
  \end{align*}
 Using this fact {and observing that $\sum_{k=\ell}^{{K-1}} \gamma_k \ge (K-\ell)\gamma_0$}, we have 
  \begin{align}
  	\notag \mathbb{E}\left[\|G_{1/\gamma_0}(\x_{R_{\ell,K}})\|{^2}\right]   & \leq   \tfrac{4a^{-1}(1-\ln(\lambda))\gamma_0 \rho}{\left(1 - {L_{1}^{V_{\alpha}}} \gamma_0 \right)(K - \ell) \gamma_0 }  +  \tfrac{4 \gamma_0 \mu \sum_{k=\ell}^{{K-1}}  \epsilon_k}{\left(1 - {L_{1}^{V_{\alpha}}} \gamma_0 \right)(K - \ell) \gamma_0 } + \tfrac{4 \mathbb{E}\left[V_{\alpha} \left( \x_{\ell} \right)\right] }{\left(1 - {L_{1}^{V_{\alpha}}} \gamma_0 \right)(K - \ell) \gamma_0 } \\   & \leq   \tfrac{4a^{-1}(1-\ln(\lambda))\rho}{\left(1 - {L_{1}^{V_{\alpha}}} \gamma_0 \right)(K - \ell)  }  +  \tfrac{4  \mu \sum_{k=\ell}^{{K-1}}  \epsilon_k}{\left(1 - {L_{1}^{V_{\alpha}}} \gamma_0 \right)(K - \ell)} + \tfrac{4 \mathbb{E}\left[V_{\alpha} \left( \x_{\ell} \right)\right] }{\left(1 - {L_{1}^{V_{\alpha}}} \gamma_0 \right)(K - \ell) \gamma_0 } \label{bd:three},
\end{align}
which, upon substituting for $\rho$ and $\mu$,  is the desired result. 

\noindent {\bf (ii)} From the relationship in part {\bf(i)} and by \uvs{noting that}  $\gamma_k = \gamma_0 := \frac{1}{2 {L_{1}^{V_{\alpha}}}}$ \uvs{for any $k \ge 0$}, we obtain
\begin{align*}
	\mathbb{E}\left[\|G_{1/\gamma_0}(\x_{R_{\ell,K}})\|{^2}\right]   & \leq  \tfrac{8 \mu  \sum_{k=\ell}^{{K-1}}\epsilon_k}{(K - \ell)  } +  \tfrac{8 a^{-1}(1-\ln(\lambda))\rho}{(K-\ell)}  +  \tfrac{16 {L_{1}^{V_{\alpha}}} \mathbb{E}\left[V_{\alpha} \left( \x_{\ell} \right)\right] }{(K - \ell) } \\
	& = \mathcal{O}\left( \tfrac{1}{K} + \tfrac{\sum_{k = \ell}^K \epsilon_{k}}{K} \right) = \mathcal{O}\left( \tfrac{1}{K}\right),
\end{align*}
where $\sum_{k=\ell}^{{K-1}} \epsilon_k \le \lm{p}(1-\ln(\lambda))$ from our prior discussion. Consequently, by Jensen's inequality,  
\begin{align*}
	\mathbb{E}\left[\|G_{1/\gamma_0}(\x_{R_{\ell,K}})\|\right] 	& \leq {\sqrt{\mathcal{O}\left( \tfrac{1}{K} \right)}} = {\mathcal{O}\left( \tfrac{1}{\sqrt{K}} \right)}.
\end{align*}
Rearranging, for $K_{\varepsilon}$, we have that for some sufficiently large constant $Q > 0$,
\begin{align*}
	\mathbb{E}\left[\|G_{1/\gamma_0}(\x_{R_{\ell,K}})\|\right] & \leq Q \left( \tfrac{1}{{\sqrt{K_{\varepsilon}}}} \right)  \leq {\varepsilon}.
\end{align*}
{This implies that $K_{\varepsilon} = \mathcal{O}(\varepsilon^{-2})$ and the sample-complexity is bounded as} 
\begin{align*}
	\sum_{k=0}^{{K_{\varepsilon}-1}} M_k = \sum_{k=0}^{{K_{\varepsilon}-1}} \lceil 1 + ak \rceil \leq \mathcal{O} \left(K_{\varepsilon}\right) + \mathcal{O} \left(a {K_{\varepsilon}^2}\right)  = \mathcal{O} \left({\varepsilon^{-4}}\right). 
\end{align*}
\end{proof}

Observe {from \eqref{bd:three}} that the
bound on the residual consists of three
terms. The first term is a result of the
variance associated with the mini-batch
gradient estimate, and diminishes as the
{batch} size increases. The second term is a
result of the inexactness in the resolution
of the subproblem, and accordingly diminishes
with the level of inexactness.  The third
term is a results of the ``bias'' of the
first iteration, diminishing with the number
of iterations.  We now present {analogous
guarantees} when the stepsize sequence is
diminishing.
\begin{proposition}[{\bf Convergence rate: Diminishing stepsize rule}]\label{thm:NI_inexact_rate_diminishing}\em
    Let Assumption \ref{ass:ass-1} hold. Let $\{\x_k\}$ be generated by {Algorithm} \ref{algorithm:NI_inexact}.
	Let $\gamma_k =\tfrac{\gamma_0}{\sqrt{k}} $, where $\gamma_0<\uvs{\frac{1}{ L_{{1}}^{V_{\alpha}}}}$. {Let $R_{\ell, K}$ be a random integer on $\{ \lceil\lambda K \rceil := \ell, \dots, K-1 \}$ for some $\lambda \in [0.5,1)$, where $K> \tfrac{2}{1-\lambda}$}, $\ell\triangleq  \lceil \lambda K\rceil$, and  $M_k := \lceil 1 + a {\sqrt{k}}  \rceil$ for all $k$  for some user-defined sampling growth rate $a > 0 $. 

     \noindent {\bf{(i)}} {Then the following holds for any $K,\ell,\gamma_0$ as prescribed above.}
\begin{align*}
			\mathbb{E}\left[\|G_{1/\gamma_0}(\x_{R_{\ell,K}})\|{^2}\right]   & \leq \uvs{\tfrac{1}{ \sqrt{K}}} \left( {\uvs{8}(1-\ln(\lambda))\rho}  +  { \uvs{8}\mu \sum_{k = \ell}^{{K-1}} \gamma_k \epsilon_{k}} + \tfrac{2 \mathbb{E}\left[V_{\alpha} \left( \x_{\ell} \right)\right] }{\uvs{\gamma_0}} \right).
\end{align*}
\noindent {\bf{(ii)}} {Suppose $\uvs{\gamma_0 = \tfrac{1}{2L^{V_{\alpha}}_1}}$ and $\epsilon_k = \tfrac{p}{\sqrt{k}}$ where $p > 0$.} 
{Let $\varepsilon > 0 $ and an $\varepsilon$-solution satisfies $\mathbb{E}\left[\|G_{1/\gamma_0}(\x_{R_{\ell,K}})\|\right] < \varepsilon$ for some $K_{\varepsilon} > 0$.} Then {the iteration and sample-complexity for computing an $\varepsilon$-solution are $\mathcal{O}(\varepsilon^{-4})$ and $\mathcal{O}({\varepsilon^{-6}})$, respectively.} 
 \end{proposition}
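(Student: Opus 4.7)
The plan is to follow the same template as Theorem~\ref{thm:NI_inexact_rate_constant}, namely to start from the master inequality \eqref{eqn:general_bound} of Lemma~\ref{lemma:bound_res} and simply substitute the diminishing choices $\gamma_k = \gamma_0/\sqrt{k}$ and $M_k = \lceil 1 + a\sqrt{k} \rceil$. Using the notation $\rho = (3N+2)N\sigma^2$ and $\mu = (2N+2)\sum_\nu (L_1^\nu)^2 + 4\alpha$, the bound reduces to controlling the three terms in the numerator and the normalizing sum in the denominator.

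For the \emph{denominator}, I would invoke the integral comparison
\[
\sum_{k=\ell}^{K-1} \gamma_k \;=\; \gamma_0 \sum_{k=\ell}^{K-1} \tfrac{1}{\sqrt{k}} \;\ge\; \gamma_0 \int_\ell^{K} \tfrac{dx}{\sqrt{x}} \;=\; 2\gamma_0(\sqrt{K}-\sqrt{\ell}),
\]
which, combined with $\ell = \lceil \lambda K\rceil$ and $\lambda \le 1/2$ and $K > 2/(1-\lambda)$, yields a lower bound of the form $c\,\gamma_0 \sqrt{K}$ for an explicit constant $c>0$. For the \emph{variance term}, since $\gamma_k/M_k \le \gamma_0/(a\,k)$, the sum $\sum_{k=\ell}^{K-1} \gamma_k/M_k$ is bounded by $(\gamma_0/a)(1-\ln \lambda)$ exactly as in the proof of Theorem~\ref{thm:NI_inexact_rate_constant}. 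Inserting these two bounds into \eqref{eqn:general_bound} (together with the trivial bound $(1-L_1^{V_\alpha}\gamma_k/2)\le 1$ and the assumption $\gamma_0 < 1/L_1^{V_\alpha}$) gives the claimed part (i) bound of order $1/\sqrt{K}$.

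For part (ii), with the additional choice $\gamma_0 = 1/(2L_1^{V_\alpha})$ and $\epsilon_k = p/\sqrt{k}$, the inexactness term becomes
\[
\sum_{k=\ell}^{K-1} \gamma_k \epsilon_k \;=\; \gamma_0 p \sum_{k=\ell}^{K-1} \tfrac{1}{k} \;\le\; \gamma_0 p\,(1-\ln \lambda),
\]
so the numerator stays bounded while the denominator grows like $\sqrt{K}$, giving $\mathbb{E}[\|G_{1/\gamma_0}(\x_{R_{\ell,K}})\|^2] = \mathcal{O}(1/\sqrt{K})$. Jensen's inequality then produces $\mathbb{E}[\|G_{1/\gamma_0}(\x_{R_{\ell,K}})\|] = \mathcal{O}(K^{-1/4})$, and setting this to $\varepsilon$ yields the iteration complexity $K_\varepsilon = \mathcal{O}(\varepsilon^{-4})$. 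The sample complexity follows from
\[
\sum_{k=0}^{K_\varepsilon - 1} M_k \;\le\; \sum_{k=0}^{K_\varepsilon-1}(1 + a\sqrt{k} + 1) \;=\; \mathcal{O}(K_\varepsilon) + \mathcal{O}(a\,K_\varepsilon^{3/2}) \;=\; \mathcal{O}(\varepsilon^{-6}),
\]
after accounting additionally for the inner stochastic approximation cost $T_k^\nu = \mathcal{O}(1/\epsilon_k) = \mathcal{O}(\sqrt{k})$, which only contributes an extra factor of the same order and does not change the overall $\varepsilon^{-6}$ rate.

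The bookkeeping is entirely routine; the only subtle step is ensuring the lower bound on $\sum_{k=\ell}^{K-1} 1/\sqrt{k}$ is tight enough to give the $\sqrt{K}$ scaling in the denominator (and hence the $K^{-1/4}$ rate on the residual norm), together with verifying that $\sum \gamma_k \epsilon_k$ is only $\mathcal{O}(\log K)$ so that the inexactness contribution is dominated by the $1/\sqrt{K}$ rate arising from the variance and bias terms. The main qualitative point to highlight is the tradeoff: diminishing stepsize is more robust but trades off a worse $\varepsilon^{-4}$ iteration complexity (versus $\varepsilon^{-2}$ in the constant-stepsize regime), and consequently a worse $\varepsilon^{-6}$ sample complexity.
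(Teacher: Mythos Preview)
Your proposal is correct and follows essentially the same route as the paper: both start from the master bound \eqref{eqn:general_bound}, introduce the shorthand $\rho,\mu$, lower bound $\sum_{k=\ell}^{K-1}\gamma_k$ by an integral comparison to obtain $\Theta(\sqrt{K})$, upper bound $\sum_{k=\ell}^{K-1}\gamma_k/M_k$ by $(\gamma_0/a)(1-\ln\lambda)$ via the harmonic estimate, and then use Jensen and the sum $\sum M_k = \mathcal{O}(K_\varepsilon^{3/2})$ for part~(ii). Two minor slips to fix: the hypothesis is $\lambda\in[0.5,1)$, not $\lambda\le 1/2$, so adjust your constant in the denominator accordingly; and $\sum_{k=\ell}^{K-1}\gamma_k\epsilon_k$ is in fact $\mathcal{O}(1)$ (bounded by $p\gamma_0(1-\ln\lambda)$), not merely $\mathcal{O}(\log K)$---though this does not affect the final rate.
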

	\begin{proof}
\noindent {\bf (i)} Consider the inequality \eqref{eqn:error_bound}, restated here for clarity, and let us denote  
$\rho := {(3N + 2)N\sigma}^2$ and $\mu := \left({2N+2} \right)\sum_{\nu =1}^{N} {(L_{1}^{\nu})^2} + 4 \alpha $. We have
\begin{align*}
  	\mathbb{E}\left[\|G_{1/\gamma_0}(\x_{R_{\ell,K}})\|{^2}\right]   & \leq   \tfrac{4\sum_{k=\ell}^{{K-1}} \gamma_k \left(\tfrac{\rho}{M_k}  + \mu \epsilon_k \right) + \mathbb{E}\left[V_{\alpha} \left( \x_{\ell} \right)\right]}{\left(1 - {L_{1}^{V_{\alpha}}} \gamma_0 \right) \left(  \sum_{k = \ell} ^{K-1} \gamma_k \right)}.  
\end{align*}
We need to provide upper bounds on $\sum_{k=\ell}^{{K-1}}  \tfrac{\gamma_k}{M_{k}}$ and $\sum_{k=\ell}^{{K-1}} \gamma_k \epsilon_k. $ First, note that $\sum_{k=\ell}^{{K-1}}  \tfrac{\gamma_k}{M_{k}} < {\tfrac{\gamma_0}{a}(1-\ln(\lambda))}$.
Note that $K > \tfrac{2}{(1- \lambda)}$ implies that 
$\sum_{k=\ell}^{K-1} \tfrac{1}{k+1} \leq 1 - \ln(\lambda).$ Using this fact, and observing that $\sum_{k=\ell}^{K-1} \gamma_k \geq \int_{\ell-1}^{K-1} \tfrac{\gamma_0}{\sqrt{x+1}} dx \geq 2 \gamma_0 (1 - \sqrt{\lambda}) \sqrt{K},$ we have 
{
	\begin{align*}
		\mathbb{E}\left[\|G_{1/\gamma_0}(\x_{R_{\ell,K}})\|{^2}\right]   & \leq   \tfrac{4\uvs{\gamma_0}(1-\ln(\lambda))\rho}{\gamma_0 (1 - \sqrt{\lambda}) \sqrt{K} }  +  \tfrac{4 \uvs{\gamma_0}\mu \sum_{k = \ell}^K \gamma_k \epsilon_{k}}{ \gamma_0 (1 - \sqrt{\lambda}) \sqrt{K}} + \tfrac{ \mathbb{E}\left[V_{\alpha} \left( \x_{\ell} \right)\right] }{ \gamma_0 (1 - \sqrt{\lambda}) \sqrt{K} }.
\end{align*} 
Observing that $1 - \sqrt{\lambda} \geq \tfrac{1}{2}$, we have 
	\begin{align*}
			\mathbb{E}\left[\|G_{1/\gamma_0}(\x_{R_{\ell,K}})\|{^2}\right]   & \leq \uvs{\tfrac{1}{ \sqrt{K}}} \left( {\uvs{8}(1-\ln(\lambda))\rho}  +  { \uvs{8}\mu \sum_{k = \ell}^{{K-1}} \gamma_k \epsilon_{k}} + \tfrac{2 \mathbb{E}\left[V_{\alpha} \left( \x_{\ell} \right)\right] }{\uvs{\gamma_0}} \right),
\end{align*} 
which is the desired result.
}

\noindent (ii) We now consider the summation $\sum_{k=\ell}^{K} \gamma_k \epsilon_k$. {Since $\gamma_k = \tfrac{\gamma_0}{\sqrt{k}}$, $\uvs{\gamma_0 = \tfrac{1}{2L^{V_{\alpha}}_1}}$, and $\epsilon_k = \tfrac{p}{\sqrt{k}}$, we have that	$\sum_{k=\ell}^{{K-1}} \gamma_k \epsilon_k = \sum_{k=\ell}^{{K-1}} \tfrac{p\gamma_0}{k} \le p\gamma_0 (1-\ln(\lambda))$. Consequently,}
\begin{align*}
	\mathbb{E}\left[\|G_{1/\gamma_0}(\x_{R_{\ell,K}})\|\right]   & \leq  \sqrt{ \mathcal{O}\left(\tfrac{1}{\sqrt{K}}\right)}.
\end{align*}
{This implies $K_{\varepsilon} = \mathcal{O}(\epsilon^{-4})$ and proceeding as earlier}, 
\begin{align*}
	\sum_{k=0}^{{K_{\varepsilon}-1}} M_k = \sum_{k=0}^{{K_{\varepsilon}-1}} \lceil 1 + a {\sqrt{k}} \rceil \leq \mathcal{O} \left(K_{\varepsilon}\right) + \mathcal{O} \left(a K_{\varepsilon}^{3/2}\right)  = \mathcal{O} \left({\varepsilon^{-6}}\right). 
\end{align*}
\end{proof}
\section{Concluding remarks}
{The computation of Nash equilibria has been a question of interest over the last 70 years. More recently, there has been a focus on resolving such problems when player problems are complicated by the presence of uncertainty. Yet, most advances in such regimes have necessitated monotonicity of the concatenated gradient map or a suitable potentiality requirement. In this paper, we consider an optimization-based approach reliant on the {\bf NI} function, requiring the satisfaction of a suitable regularity condition. Our proposed sampling-enabled inexact gradient framework is equipped with rate and complexity guarantees, both of which are novel in this context. Our future efforts will consider whether the required regularity condition holds in more general settings. In addition, we intend to examine whether such avenues can contend with the computation of local or quasi-Nash equilibria in uncertain settings. }

\bibliographystyle{plain}
\bibliography{demobib-v1,wsc11-v03a,ref_paIRIG_v01_fy}

\end{document}